\newcommand{\R}{\ensuremath{\mathbb{R}}}
\newcommand{\N}{\ensuremath{\mathbb{N}}}
\definecolor{dgreen}{rgb}{0.,0.6,0.}
\newcommand{\cotan}{\mathrm{cotan}}
\numberwithin{equation}{section}
\newtheorem{thm}{Theorem}[section]
\newtheorem{prop}[thm]{Proposition}
\newtheorem{lem}[thm]{Lemma}
\newtheorem{rem}[thm]{Remark}
\newtheorem{conjecture}[thm]{Conjecture}
\title{Courant-sharp Robin eigenvalues for the square and other planar domains}
\author{K. Gittins
\footnote{ Universit\'e de Neuch\^atel, Institut de Math\'ematiques, Rue Emile-Argand 11, CH-2000 Neuch\^atel.
Email: \texttt{katie.gittins@unine.ch}}\,
and B. Helffer
\footnote{Laboratoire de Math\'ematiques Jean Leray, Universit\'e de Nantes, 2 rue de la Houssini\`ere, 44 322 Nantes CEDEX 3 - FRANCE.
Email: \texttt{Bernard.Helffer@univ-nantes.fr}}
}
\date{\today}
\begin{document}
	\maketitle
\begin{abstract}	

This paper is devoted to the determination of the cases where there is
equality in Courant's nodal domain theorem in the case of a Robin boundary condition.
For the square, we partially extend the results that were obtained by Pleijel, B\'erard--Helffer,
Helffer--Persson--Sundqvist for the Dirichlet and Neumann problems.

After proving some general results that hold for any value of the Robin parameter $h$, we focus on the case when $h$ is large. We hope to come back to the analysis when $h$ is small in a second paper.

We also obtain some semi-stability results for the number of nodal domains of a Robin eigenfunction
of a domain with $C^{2,\alpha}$ boundary ($\alpha >0$) as $h$ large varies.

\end{abstract}

\paragraph{MSC classification (2010):}    35P99, 58J50, 58J37.

\paragraph{Keywords:} Courant-sharp, Robin eigenvalues, square, planar domains.

\newpage
\tableofcontents
\newpage
\section{Introduction.}

Let $\Omega \subset \R^m$, $m \geq 2$, be a bounded, connected, open set with Lipschitz boundary and
let $h \in \R$, $h \geq 0$. The case when $h< 0$ is mathematically interesting but less motivated by Physics. The Robin eigenvalues of the Laplacian on $\Omega$ with parameter
$h$ are $\lambda_{k,h}(\Omega) \in \R$, $k \in \N$, $k \geq 1$, such that there exists a function $u_k \in
H^1(\Omega)$ which satisfies
\begin{align*}
-\Delta u_k(x) &= \lambda_{k,h}(\Omega)u_k(x)\,, \quad x \in \Omega\,, \notag \\
\frac{\partial}{\partial \nu} u_k(x) &+ h\, u_k(x) = 0\,,  \quad x \in \partial\Omega\,, 
\end{align*}
where $\nu$ is the outward-pointing unit normal to $\partial \Omega$.

We recall that by the minimax principle, the Robin problem is associated with the quadratic form:
$$
H^1(\Omega) \ni u \mapsto \int_\Omega |\nabla u|^2 + h \int_{\partial \Omega} |u_{\partial \Omega}|^2 d\sigma\, ,
$$
 where $u_{\partial \Omega}$ is the trace of $u$.
So the spectrum is monotonically increasing with respect to $h$ for $h\in [0,+\infty)$. That is, the Robin eigenvalues
with $h >0$ interpolate between the Neumann eigenvalues ($h=0$) and the Dirichlet eigenvalues ($h=+\infty$).

The Robin eigenvalues satisfy the celebrated Courant nodal domain theorem \cite{CH} stating that any eigenfunction corresponding to $\lambda_{k,h}(\Omega)$ has at most $k$ nodal domains. We consider the Courant-sharp Robin eigenvalues of $\Omega$.
We call a Robin eigenvalue $\lambda_{k,h}(\Omega)$ Courant-sharp if it has a corresponding eigenfunction
that has exactly $k$ nodal domains. As for  the Dirichlet and Neumann eigenvalues,
 $\lambda_{1,h}(\Omega)$ and $ \lambda_{2,h}(\Omega)$ are Courant-sharp for all $h \geq 0$.

An interesting question is whether it is possible to follow the Courant-sharp (Neumann) eigenvalues with $h=0$ to
Courant-sharp (Dirichlet) eigenvalues as $h \to + \infty$, or whether there are some critical  values
$h^*(k, \Omega)$ after which the Robin eigenvalues $\lambda_{k,h}(\Omega)$, $h \geq h^*(k,\Omega)$
 become Courant-sharp or are no longer Courant-sharp.

We note that throughout this paper, we denote the Dirichlet eigenvalues by $\lambda_k^D$
and the Neumann eigenvalues by $\lambda_k^N$.\\

We consider the particular example where $\Omega$ is a square $S$ in $\R^2$ of side-length $\pi$ and the main question is:\\
{\it Is it possible to determine the Courant-sharp Robin eigenvalues of this square?}

 As  $\lambda_{2,h}(S)=\lambda_{3,h}(S)$ by a symmetry argument, it  follows immediately 
that $\lambda_{3,h}(S)$ is not Courant-sharp for any $h \geq 0$.
 In addition,
$\lambda_{4,h}(S)$ is Courant-sharp for all $h \geq 0$, see Subsection~\ref{SS:2.3}.

It was asserted by Pleijel in \cite{Pl} that the only Courant-sharp Dirichlet eigenvalues of the square are for $k=1,2,4$. This was shown rigorously in \cite{BH1}. The only Courant-sharp Neumann eigenvalues of the square are for $k=1,2,4,5,9$,
as shown in \cite{HPS1}.

The first step to obtain the results of \cite{BH1,HPS1} is to reduce the number of potential Courant-sharp eigenvalues by invoking an argument which was inspired by the founding paper of Pleijel \cite{Pl}.
We employ a similar argument in Section~\ref{s3} to reduce the possible cases that may give rise to Courant-sharp Robin eigenvalues. We have the following theorem.

\begin{thm}\label{prop:p1} Let $h\geq 0$.
If $\lambda_{k,h}(S)$ is an eigenvalue of $S$ with $ k \geq 520$, then it is not Courant-sharp.
\end{thm}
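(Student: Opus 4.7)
My plan is to extend the Pleijel-type nodal-domain counting used in \cite{BH1, HPS1} for the Dirichlet and Neumann problems to the Robin setting, and combine it with a quantitative upper bound on $\lambda_{k,h}(S)$. Specifically, I would (i) establish a Faber--Krahn-type upper bound $\mu(u_{k,h}) \leq F(\lambda_{k,h}(S))$ on the number of nodal domains, and (ii) obtain an upper bound $\lambda_{k,h}(S) \leq \Lambda(k)$ from monotonicity in $h$ and explicit knowledge of $\lambda_k^D(S)$. Courant-sharpness ($\mu = k$) would then give $k \leq F(\Lambda(k))$, and I would verify by a direct numerical bookkeeping that this fails for $k \geq 520$.

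\textbf{Step (i): Faber--Krahn on nodal domains.} Let $u$ be an eigenfunction for $\lambda := \lambda_{k,h}(S)$, with nodal domains $\{\omega_i\}_{i=1}^{\mu}$. On an \emph{interior} nodal domain (one with $\overline{\omega_i}\cap\partial S = \emptyset$), $u$ solves a Dirichlet problem, so the classical Faber--Krahn inequality yields $|\omega_i|\geq \pi j_{0,1}^2/\lambda$. On a \emph{boundary} nodal domain, $u$ satisfies a mixed Robin--Dirichlet problem; since $h\geq 0$, the associated first eigenvalue dominates the first Neumann--Dirichlet eigenvalue on $\omega_i$, which in turn equals a first Dirichlet eigenvalue on the reflected, doubled-or-quadrupled domain $\tilde\omega_i$ (reflection is legal because $\partial S$ is piecewise straight), so a second Faber--Krahn application gives $|\omega_i|\geq \pi j_{0,1}^2/(c_i\lambda)$ with $c_i\in\{2,4\}$. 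Using $|S|=\pi^2$, bounding the number of corner-touching nodal domains by $4$, and bounding the number of edge-touching nodal domains by $O(\sqrt{\lambda})$ (via a one-dimensional Courant-type estimate for the sign changes of $u|_{\partial S}$), summing should yield an explicit inequality of the form
$$
\mu \leq \frac{\pi}{j_{0,1}^2}\,\lambda + C\sqrt{\lambda} + 4.
$$

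\textbf{Step (ii) and the main obstacle.} For the eigenvalue bound, monotonicity in $h$ yields $\lambda_{k,h}(S)\leq \lambda_k^D(S)$; since the Dirichlet eigenvalues of $S$ are $\{m^2+n^2 : m,n\geq 1\}$ counted with multiplicity, a lattice-point count in the quarter disc provides a concrete $\Lambda(k)$ with $\lambda_k^D(S)\leq \Lambda(k)\sim 4k/\pi$. Combining with Step~(i), Courant-sharpness would force $k \leq (4/j_{0,1}^2)\,k + C'\sqrt{k} + O(1)$, and since $4/j_{0,1}^2 \approx 0.6917 < 1$, this fails for all sufficiently large $k$. The hard part will be to bring the threshold down to $520$ rather than leaving it in the thousands: at moderate $k$ the $O(\sqrt{k})$ corrections from boundary nodal domains and from the Weyl remainder for $\lambda_k^D(S)$ are non-negligible, so I would need both a sharp explicit (not merely asymptotic) upper bound on $\lambda_k^D(S)$, valid already at $k\sim 500$, and a precise constant-by-constant accounting of the edge- and corner-touching nodal domains. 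The quantitative bookkeeping is where the real work lies; the structural inequality itself is a direct Robin analogue of Pleijel's original argument.
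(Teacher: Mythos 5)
Your proposal follows essentially the same route as the paper: Faber--Krahn on the interior nodal domains combined with the area of $S$, a Sturm-type $O(\sqrt{\lambda})$ count of the nodal domains meeting $\partial S$ obtained from the restriction of the eigenfunction to the sides (the paper's explicit constant is $4\sqrt{\lambda}$), and the passage between $k$ and $\lambda_{k,h}$ via monotonicity in $h$ together with Pleijel's lattice-point bounds for the Dirichlet and Neumann counting functions, which is exactly the bookkeeping that produces the threshold $520$. The only substantive deviation is your reflection-based Faber--Krahn estimate for boundary nodal domains, which is superfluous to the final inequality (those domains are controlled by the one-dimensional sign-change count, not by area) and would in any case break down for a nodal domain meeting two opposite sides of the square, where no doubling or quadrupling reflection is available.
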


We note that in the case of  a Dirichlet boundary condition, the equivalent statement in \cite{Pl} gives $k \geq 34$ and in the case of a Neumann boundary condition, \cite{HPS1}, $ k \geq 209$.
The strategies of \cite{BH1, HPS1} are then either to re-implement the Faber-Krahn inequality, or to use symmetry properties of the corresponding eigenfunctions to further eliminate potential Courant-sharp eigenvalues. One is then reduced to the analysis of the nodal structure of very few families of eigenfunctions that belong to two-dimensional  eigenspaces.

We will show that the Robin eigenfunctions satisfy analogous symmetry properties.  We were not able to eliminate potential Courant-sharp cases via symmetry as it is possible that a Robin eigenvalue has multiplicity larger than 2 and the corresponding eigenfunctions have no common symmetries (see Subsection~\ref{ssec:9CS}).

In addition, for a Robin eigenvalue $\lambda_{k,h}(S)$, we do not know how to take the relationship between $k$ and $h$ into account in an efficient way. Indeed, to prove Theorem~\ref{prop:p1} our arguments are independent of $h$ as they rely on the monotonicity of the Robin eigenvalues and comparison to the corresponding Dirichlet and Neumann eigenvalues.\\

We note that the recent articles \cite{FK18, kGcL} also consider the Robin eigenvalues of Euclidean domains
and make use of this monotonicity property.
In \cite{kGcL}, upper bounds are obtained for the Courant-sharp Neumann and Robin eigenvalues with $h >0$ of a bounded, connected, open set $\Omega \subset \R^n$ with $C^2$ boundary.
In \cite{FK18}, it is shown that the Robin eigenvalues with $h>0$ on rectangles and unions of rectangles with prescribed area satisfy P\'olya-type inequalities.\\

In addition, we treat the problem asymptotically as $h\rightarrow +\infty$.
Hence we show that for $h$ large enough the only Courant-sharp Robin eigenvalues are for $k=1,2,4$.
 \begin{thm}\label{thm:hlarge}
 There exists $h_1>0$ such that for $h\geq h_1$, the Courant-sharp cases for the Robin problem are the same
 as those for $h=+\infty\,$  (i.e. the Dirichlet case).
 \end{thm}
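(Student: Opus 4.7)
The plan is to combine Theorem~\ref{prop:p1} with the known Dirichlet analysis of \cite{BH1} and the semi-stability of the nodal domain count as $h\to+\infty$ announced in the abstract. By Theorem~\ref{prop:p1}, the index $k$ of any Courant-sharp Robin eigenvalue of $S$ is bounded by $519$, uniformly in $h\ge 0$. It therefore suffices, for each $k\in\{3,5,6,\ldots,519\}$, to produce a threshold $h_1(k)>0$ such that $\lambda_{k,h}(S)$ is not Courant-sharp for $h\ge h_1(k)$, and then set $h_1:=\max_k h_1(k)$. For $k\in\{1,2,4\}$ there is nothing to show, and the case $k=3$ is already ruled out for every $h$ by the symmetry-induced equality $\lambda_{2,h}(S)=\lambda_{3,h}(S)$.

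For each remaining $k$ I would argue by contradiction. Suppose there is a sequence $h_n\to+\infty$ with $\lambda_{k,h_n}(S)$ Courant-sharp, and let $u_n$ be a corresponding $L^2$-normalized eigenfunction with exactly $k$ nodal domains. The standard convergence $\lambda_{k,h}(S)\to\lambda_k^D(S)$ together with uniform $H^1$ bounds yields, along a subsequence, $u_n\to v$ in $H^1(S)$, where $v$ is a normalized Dirichlet eigenfunction at level $\lambda_k^D(S)$. Now the semi-stability result of the paper is invoked to give an upper bound on the number of nodal domains: $\mu(u_n)\le \mu(v)$ for $n$ large. Since \cite{BH1} shows that for $k\notin\{1,2,4\}$ \emph{every} element of the Dirichlet eigenspace at $\lambda_k^D(S)$ has strictly fewer than $k$ nodal domains, this forces $\mu(u_n)<k$, contradicting the Courant-sharp hypothesis. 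Importantly, the argument is insensitive to the multiplicity of $\lambda_k^D(S)$, because the Dirichlet bound $\mu(v)<k$ is uniform over the whole eigenspace.

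The main obstacle is the semi-stability step itself. The Dirichlet limit $v$ vanishes on $\partial S$ whereas the $u_n$ do not, so one has to rule out extra nodal components pinching off near $\partial S$ as $h\to+\infty$; moreover, the four corners of the square weaken the boundary regularity that is available in the $C^{2,\alpha}$ setting of the general semi-stability theorem of the paper. Controlling the nodal set up to the boundary in a strong enough sense (for instance in Hausdorff distance, so as to preserve connected-component counts) requires uniform Schauder estimates away from the corners, together with a direct local analysis at each corner. Adapting the general semi-stability statement to the Lipschitz square is therefore the delicate part of the proof; once this is in place, the contradiction argument above closes the theorem essentially for free.
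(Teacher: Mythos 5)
Your architecture is genuinely different from the paper's: you want to dispose of every index $k\in\{5,\dots,519\}$ at once by a compactness-plus-semi-stability argument, whereas the paper first runs Pleijel's scheme with the Bossel--Daners Robin Faber--Krahn inequality to force $\lambda_{n,h}\le 50$ for $h$ large (this is where the area lower bound for boundary nodal domains, obtained from Lemma~\ref{lem5.1} together with the $h$-independent bound $n\le 520$, is essential), then eliminates $k=7,9$ by the Leydold antisymmetry argument (Lemma~\ref{lem4.1}), and only invokes the perturbation machinery of Section~\ref{sp5} for the single remaining case $k=5$. The difference is not cosmetic, because the step you defer as delicate-but-routine is exactly the one the paper cannot, and does not claim to, carry out in the generality you need. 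Proposition~\ref{propPerturb} is proved for a smooth finite-parameter family $\Phi_{h,\theta}$, and its boundary analysis at a corner explicitly assumes that the corner does not lie in the closure of the nodal set of the limiting Dirichlet eigenfunction; the paper states plainly that it does not know what happens when the corner belongs to the zero set. This degenerate case is not exotic: already for $k=5$ and $\theta=3\pi/4$ the Dirichlet nodal set is the pair of diagonals $x=\pm y$, whose closure contains all four corners (equivalently, $\partial^2_{xy}\Phi_{+\infty,\theta,0,2}$ vanishes at the corners precisely when $\cos\theta+\sin\theta=0$), and the paper handles it only by the explicit Wronskian computation of Lemma~\ref{lemabove} and a Morse analysis special to the $(0,2)$ family. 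In your scheme such corner-touching configurations must be treated for every eigenspace up to $k=519$ in which they occur, which is precisely the work the Faber--Krahn reduction is designed to avoid.

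There is a second gap in the passage from your contradiction sequence to the semi-stability statement. Convergence $u_n\to v$ in $H^1(S)$ by itself gives no control whatsoever on nodal domain counts; Proposition~\ref{propPerturb} compares $\rho(h,\theta)$ with $\rho(h_0,\theta_0)$ along a \emph{smooth family}, so you must first realise $u_n$ as $\Phi_{1/h_n,a^{(n)}}$ for coefficient vectors $a^{(n)}$ on the unit sphere of a fixed eigenspace and extract a convergent subsequence of the $a^{(n)}$. That requires knowing which pairs $(p,q)$ span the eigenspace of $\lambda_{k,h}$ for all large $h$, i.e.\ that the finitely many relevant eigenvalue curves have stopped crossing (Proposition~\ref{p:crossing} and Section~\ref{sec:s5}), and then a covering argument over the compact sphere of normalised limiting eigenfunctions to produce a single threshold $h_1(k)$; for Dirichlet eigenvalues of multiplicity greater than $2$ (which occur well below $k=519$, e.g.\ at $\lambda^D=50$ or $65$) this is a multi-parameter version of Proposition~\ref{propPerturb} that the paper does not state. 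None of these steps is obviously impossible, but together with the corner issue they constitute the proof rather than an appendix to it, so the proposal as written does not close the theorem.
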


In order to prove this theorem, we follow the strategy due to Pleijel, \cite{Pl}.
It is  therefore necessary to estimate the number of nodal domains whose boundaries intersect the boundary of the square on at least a non-trivial interval. For such nodal domains, we cannot use the Faber-Krahn inequality
for the Dirichlet problem. Nevertheless, there is a Faber-Krahn inequality for the Robin problem when $h>0$ (see \cite{Bos, BG2, D}). We will see how this can be used for $h$ sufficiently large in  Subsection~\ref{ss3.3} and Section~\ref{S:hlarge}.\\

In Section \ref{sp5}, we analyse the number of nodal domains of Robin eigenfunctions in the general context of a planar domain with piecewise $C^{2, \alpha}$ boundary ($\alpha >0$). We obtain some semi-stability results for the number of
nodal domains as the Robin parameter ($h$ large) varies.

For the square, the results of Section~\ref{sp5} allow us to deal with the remaining case $k=5$ which is not covered by Pleijel's strategy. In Section~\ref{s:5CS}, we describe explicitly the situations where the eigenfunction corresponding to the fifth Robin eigenvalue has $2$, $3$, $4$ nodal domains respectively (for $h>0$ sufficiently large).

In a second paper, \cite{GHPS},  we hope to look at the situation where the Robin parameter $h$ tends to $0$ and  to discuss the following conjecture.

\begin{conjecture}\label{thm:hsmall}
 There exists $h_0>0$ such that for $0<h \leq h_0$, the Courant-sharp cases for the Robin problem are the same, except the fifth one, as those for $h=0$ (i.e. the Neumann case)\,.
\end{conjecture}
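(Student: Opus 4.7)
The plan is to adapt the strategy of the present paper, which treats $h \to +\infty$ via Pleijel's argument and the semi-stability analysis of Section~\ref{sp5}, by replacing $+\infty$ with $0^+$ as the reference. The square $S = [0,\pi]^2$ has a separable spectrum at every $h \geq 0$: let $\mu_i(h)$ and $f_i(\cdot,h)$ denote the $i$-th 1D Robin eigenvalue and eigenfunction on $[0,\pi]$, normalized so that $f_i(\cdot,0) = \cos(i\,\cdot)$. Then the 2D Robin eigenpairs are $(\mu_i(h)+\mu_j(h),\ f_i(x,h)f_j(y,h))$. As $h \to 0^+$, the 1D data are real-analytic in $h$ with $\mu_0(h) = 2h/\pi + O(h^2)$ and $\mu_i(h) = i^2 + 4h/\pi + O(h^2)$ for $i \geq 1$, and $f_i(\cdot,h) \to \cos(i\,\cdot)$ in $C^\infty([0,\pi])$. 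Since the Neumann Courant-sharp set is $\{1,2,4,5,9\}$, the task is to show that for small $h>0$ the index $k = 5$ drops out and no new index enters.

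The indices $k \geq 520$ are eliminated by Theorem~\ref{prop:p1}, and $k \in \{1,2,4\}$ are covered at every $h \geq 0$ by Subsection~\ref{SS:2.3}. For $k = 9$, $\lambda_9^N = 8 = 2\mu_2(0)$ is simple with eigenfunction $\cos(2x)\cos(2y)$ whose $3\times 3$-grid nodal set yields $9$ nodal domains; for small $h>0$, $\lambda_{9,h}(S) = 2\mu_2(h)$ is still simple and $f_2(x,h)f_2(y,h)$ has nodal set cut out by the two simple zeros of $f_2(\cdot,h)$ in each variable (continuous perturbations of $\pi/4$ and $3\pi/4$), preserving the $9$ nodal domains. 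For the remaining indices $k \in \{3,6,7,8\} \cup \{10,\ldots,519\}$, the Neumann case is not Courant-sharp, and this would be propagated to $0 < h \leq h_0$ by an $h \to 0^+$ analogue of the semi-stability results of Section~\ref{sp5}, together with a careful account of how Neumann multiplicities behave under perturbation (the $x \leftrightarrow y$ symmetry always contributes a factor $2$, while accidental coincidences such as $1^2+7^2 = 5^2+5^2$ split only at order $h^2$ or higher since all sums $\mu_i(h)+\mu_j(h)$ with $i,j \geq 1$ share the first-order slope $8h/\pi$).

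The principal obstacle is the case $k = 5$, where $\lambda_{5,h}(S) = \mu_0(h)+\mu_2(h)$ has multiplicity $2$ with eigenspace spanned by $u_1 := f_0(x,h)f_2(y,h)$ and $u_2 := f_2(x,h)f_0(y,h)$. For Neumann, $u_1 + u_2 = \cos(2x) + \cos(2y)$ achieves $5$ nodal domains precisely because its nodal set (the inscribed diamond) is tangent to $\partial S$ at the four midpoints of the sides. For $h > 0$, this tangency is destroyed: using the normalizations $f_0(\pi/2,h) = 1$, $f_2(\pi/2,h) = -1$, $f_0(0,h) = \cos(\sqrt{\mu_0(h)}\,\pi/2) = 1 - h\pi/4 + O(h^2)$ and $f_2(0,h) = 1 + O(h^2)$, one computes
\[
(u_1+u_2)(0,\pi/2) = -f_0(0,h) + f_2(0,h) = \frac{h\pi}{4} + O(h^2) > 0,
\]
and the same strict positivity holds everywhere on $\partial S$ for $0 < h \leq h_0$ (by the symmetries of $S$ and the fact that $y = \pi/2$ is the unique boundary minimum of $(u_1+u_2)|_{\{x=0\}}$ in the Neumann limit). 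Hence the nodal set of $u_1+u_2$ lies strictly in $S^\circ$, and by continuity and the simple connectedness of the Neumann negative region it consists of a single closed curve, yielding only $2$ nodal domains. For a general $\alpha u_1 + \beta u_2$, writing $\phi := f_2(\cdot,h)/f_0(\cdot,h)$ (smooth since $f_0 > 0$), the nodal set becomes $\{\alpha \phi(y) + \beta \phi(x) = 0\}$. The Wronskian $W := f_2' f_0 - f_2 f_0'$ satisfies $W' = (\mu_0 - \mu_2) f_0 f_2$, with $W(0) = W(\pi) = 0$ by the Robin identity $f_i'(0,h) = h f_i(0,h)$ and $W(\pi/2) = 0$ by the $x \leftrightarrow \pi - x$ antisymmetry of $W$; the sign structure of $f_2$ then shows that $\{0,\pi/2,\pi\}$ are the only zeros of $W$, so $\phi' = W/f_0^2$ vanishes exactly there and $\phi$ is strictly monotone on each of $[0,\pi/2]$ and $[\pi/2,\pi]$. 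A case analysis over the ratio $r := -\beta/\alpha \in \R \cup \{\infty\}$ then enumerates the possible nodal pictures and shows the maximum nodal count to be $4$, attained at $r = 1$ where the nodal set is exactly the two diagonals $y = x$ and $y = \pi - x$. This case analysis is the small-$h$ counterpart of Section~\ref{s:5CS} and, together with the semi-stability results above, forms the main content of the companion paper \cite{GHPS}.
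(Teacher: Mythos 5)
The statement you are proving is stated in the paper as a \emph{conjecture}: the authors give no proof, explicitly defer the small-$h$ analysis to the companion paper \cite{GHPS}, and the present paper only supplies some of the ingredients you invoke ($h$-independent reduction to $k<520$, the cases $k=1,2,4$, and the fact that $\lambda_{9,h}$ remains Courant-sharp for $0\leq h\leq h_9^*$). So there is nothing in the paper to compare your argument against; what you have written is a plausible programme, essentially the one the authors announce, but not a proof.

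The two genuine gaps are the following. First, the ``$h\to 0^+$ analogue of the semi-stability results of Section~\ref{sp5}'' is asserted, not established, and it is not a routine relabelling of the $h\to+\infty$ argument: the uniform lower bound on nodal-domain areas (Lemma~\ref{lem5.1}) comes from the Robin Faber--Krahn inequality and degenerates precisely at the Neumann end (the paper notes that the area bound holds ``as soon as we avoid the Neumann situation''), and the boundary analysis in Section~\ref{sp5} uses the Robin condition with $1/h$ small to control the sign of $\Phi_{h,\theta}$ near $\partial S$ --- neither transfers to a base point $h_0=0$. Moreover, Proposition~\ref{propPerturb} controls the nodal count of a \emph{fixed smooth family} near a base parameter; to conclude ``not Courant-sharp'' for all of $k\in\{3,6,7,8\}\cup\{10,\dots,519\}$ at small $h$ you must cover entire eigenspaces whose dimension drops as $h$ becomes positive (the accidental Neumann degeneracies such as $1^2+7^2=5^2+5^2$ that you yourself note split only at order $h^2$), and you must track the labelling through crossings, which Section~\ref{sec:s5} shows does migrate. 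You acknowledge these issues but do not resolve them. Second, the case $k=5$, which carries the entire ``except the fifth one'' clause, is only sketched: the boundary computation $(u_1+u_2)(0,\pi/2)=h\pi/4+\mathcal O(h^2)$ is correct, but the claims that positivity propagates to all of $\partial S$, that the detached nodal set is a single closed curve, and above all that the case analysis over $r=-\beta/\alpha$ caps the nodal count at $4$ for \emph{every} combination, are each stated without proof; the last is exactly the analogue of the work done in Section~\ref{s:5CS} for $h$ large, and it is the crux. Until these two items are supplied, the statement remains, as in the paper, a conjecture.
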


In light of the results of \cite{Pl, BH1, HPS1} and of the previous asymptotic results, a key question is to what extent is it possible to follow the Courant-sharp (Neumann) eigenvalues with $h=0$ to Courant-sharp Robin eigenvalues as $h \to + \infty$?\\
We prove a first general result concerning the possible crossings between curves corresponding to Robin eigenvalues.

We then focus on the cases $k=9, 25$.
 For the case $k=9$, we investigate if there exist critical values $ \overline{h}_9^*(S)$, respectively $\underline{h}_9^*(S)$, after which the Robin eigenvalue $ \lambda_{9,h}(S)$ is not Courant-sharp, respectively before which it is Courant-sharp, the next question being whether we have $\underline{h}_9^*=\overline{h}_9^*$. This question will be addressed in Subsection~\ref{ssec:9CS}. In Subsection \ref{ss5.3}, for $k=25$, we show that there exists $h_{25}^*$ such that $\lambda_{25,h}$ is not Courant-sharp for $h < h_{25}^*$, and we also investigate the structure of the nodal partitions of $S$ for $h$ sufficiently large.\\

\textbf{Acknowledgements:}\\
We are very grateful to Pierre B\'erard, Dorin Bucur and Thomas Hoffmann-Ostenhof for useful discussions, and to
Alexander Weisse for introducing us to the mathematics software system ``SageMath'' and helping us to produce some graphs of the Robin eigenvalues of the square. KG acknowledges support from the Max Planck Institute for Mathematics, Bonn, from October 2017 to July 2018.

All figures presented in this paper were created using SageMath.

\section{Formulas for the eigenvalues and eigenfunctions of the Robin Laplacian for a rectangle.}\label{s4}
\subsection{Main formulas.}\label{ss:2.0}
Here we follow the description given in \cite{GN} and we specialise to 2 dimensions.
For rectangles $\Omega=(0,\ell_1) \times (0,\ell_2) \subset \R^2$ and $(x,y) \in \Omega$,
an orthonormal basis for the Robin problem is given by
\begin{equation}\label{eq:Refnct}
u_{p,q,h} (x,y) = u_{p,h}(x) u_{q,h}(y),
\end{equation}
where, for  $p,q\in \mathbb N$ (where $\mathbb N$ is the set of the non-negative integers), $u_p$ is the $(p+1)$-st eigenfunction of the Robin problem in $(0,\ell_1)$:
$$u_{p,h}(x) = \sin (\alpha_p(h)  x/\ell_1) + \frac{\alpha_p(h)}{h \ell_1} \cos (\alpha_p(h) x/\ell_1)\,,
$$
and similarly for $u_{q,h}(y)$ with $y \in (0,\ell_2)$.
One should assume $\alpha_p(h) \neq 0$ (resp. $\alpha_q(h)\neq 0$), which holds for $h\neq 0$. For $h=0$, the solution is trivial, hence not the right one!
Here $\alpha_p=\alpha_p(h)$ is the solution  in $[p\pi, (p+1)\pi)$ of
\begin{equation}\label{eq:Robin}
\frac{2 \alpha_p}{h \ell_1} \cos \alpha_p  + \left(1 -\frac{(\alpha_p)^2}{h^2\ell_1^2}\right) \sin \alpha_p  =0\,.
\end{equation}
The Robin eigenvalues are then given by
$$ \bigg(\frac{\alpha_p}{\ell_1}\bigg)^2 + \bigg(\frac{\alpha_q}{\ell_2}\bigg)^2. $$
So in 2 dimensions, the Robin eigenvalues correspond to pairs of non-negative integers $(p,q)$.\\
We analyse the $1D$-situation  in more detail and delete the reference to $p,q,h$.
We note that  the condition \eqref{eq:Robin} reads (for $h\neq 0$ and $\alpha \neq 0$),
$$
\frac {\alpha}{h\ell} = \pm \left ( \sin \alpha +\frac{\alpha}{h\ell} \cos \alpha\right)\,.
$$
In this way one understands the symmetry properties of the eigenfunctions better (see Lemma \ref{lemsymrobin}).\\
One also obtains the  localisation of the eigenvalues in the following way.\\
If we consider the symmetric case, $
\frac {\alpha}{h\ell} =  \left ( \sin \alpha +\frac{\alpha}{h\ell} \cos \alpha\right)\,
$,
we get
$$
\frac {2\alpha}{h\ell}  \,  \sin^2\left(\frac \alpha 2\right) = \sin \alpha\,,
$$
which leads to
\begin{equation}\label{eq:alphaneven}
\alpha \tan \left(\frac \alpha 2\right) = h \ell\,.
\end{equation}

Similarly, if we consider the antisymmetric case, $
\frac {\alpha}{h\ell} = -  \left ( \sin \alpha +\frac{\alpha}{h\ell} \cos \alpha\right)\,
$,
we get
$$
\frac {2\alpha}{h\ell}  \,  \cos^2\left(\frac \alpha 2\right) = - \sin \alpha\,,
$$
which leads to
\begin{equation}\label{eq:alphanodd}
\frac {\alpha}{h\ell} = -  \tan  \left(\frac \alpha 2\right)\,.
\end{equation}

With these formulas in mind, we get  simpler
expressions for the eigenfunctions.\\
 In the first case, we observe that
$$
\begin{array}{ll}
u (x) & = \sin (\alpha x/\ell ) + \frac{\alpha}{h \ell} \cos (\alpha x/\ell)\\ & = \sin (\alpha x/\ell ) + \cotan (\frac \alpha  2) \cos  (\alpha x/\ell)\\ &  = \frac{1}{\sin \frac \alpha 2} \, \cos ( \frac{\alpha x}{\ell} -\frac \alpha 2 )  \,.
\end{array}
$$
In the second case, we observe that
$$
\begin{array}{ll}
u (x) &= \sin (\alpha x/\ell ) + \frac{\alpha}{h \ell} \cos (\alpha x/\ell) \\ & = \sin (\alpha x/\ell ) - \tan (\frac \alpha  2) \cos  (\alpha x/\ell)\\ &  = \frac{1}{\cos \frac \alpha 2} \,  \sin ( \frac{\alpha x}{\ell} -\frac \alpha 2)  \,.
\end{array}
$$
In this way, we clearly see the symmetry properties of the eigenfunctions and we are
closer to the Neumann case by considering $x\mapsto  \cos ( \frac{\alpha x}{\ell} -\frac \alpha 2 )$ or $x \mapsto \sin ( \frac{\alpha x}{\ell} -\frac \alpha 2)$  as eigenfunctions.\\
 The first case corresponds to  $p$ even.
 When $h=0$, we have $\alpha = p\pi$ and
 $ \cos ( \frac{\alpha x}{\ell} -\frac \alpha 2 ) = (-1)^\frac p2 \cos  ( \frac{p\pi  x}{\ell})$.\\
 The second case corresponds to $p =2n+1$ odd ($ n\in \mathbb N$). When $h=0$, we have $ \alpha = p\pi$ and
 $ \sin ( \frac{\alpha x}{\ell} -\frac \alpha 2) = \pm  \cos (\frac{ p \pi x}{\ell})$.

\begin{figure}[!ht]
 \begin{center}
\includegraphics[width=10cm]{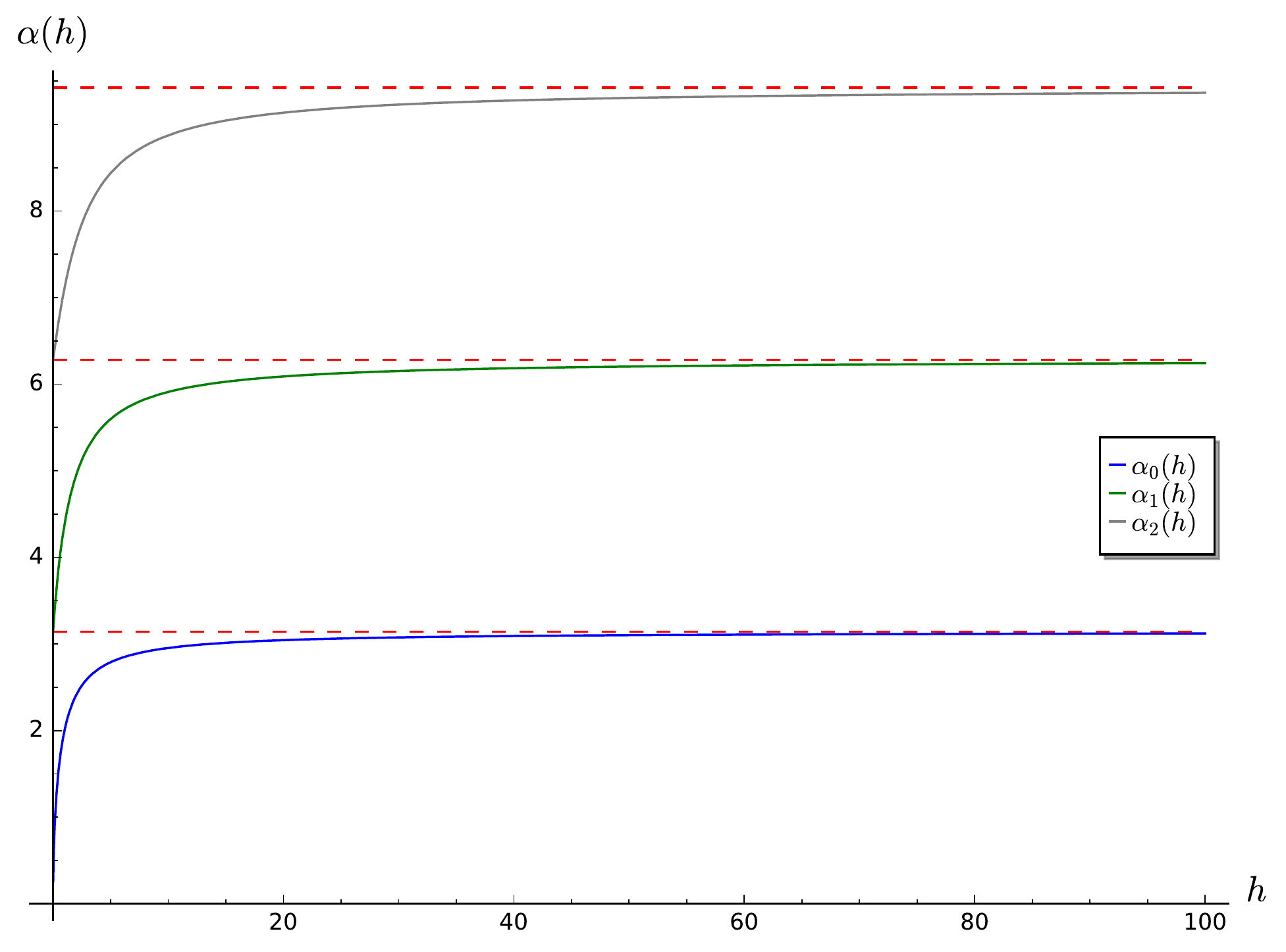}
 \caption{Solutions $\alpha_0(h)$, $\alpha_1(h)$, $\alpha_2(h)$ for $h \leq 100\,$.}
 \label{fig0}
  \end{center}
 \end{figure}

 By setting $\ell = \pi$ and then translating $x \mapsto x + \frac{\pi}{2}$, we have that the Robin eigenfunctions (assuming $h>0$)
 of the square $ S:=(-\frac{\pi}{2},\frac{\pi}{2})^2$ are given by \eqref{eq:Refnct} with
 \begin{equation}\label{form1}
 u_p(x)=\frac{1}{\sin \frac {\alpha_p} 2} \, \cos \left( \frac{\alpha_p x}{\pi}\right)\,,
 \end{equation}
 when $p$ is even, and
 \begin{equation}\label{form2}
 u_p(x)=\frac{1}{\cos \frac {\alpha_p} 2} \,  \sin \left( \frac{\alpha_p x}{\pi}\right)\,,
 \end{equation}
 when $p$ is odd.
 In Figure~\ref{fig0}, we plot $\alpha_0(h)$, $\alpha_1(h)$, $\alpha_2(h)$ for $h \leq 100\,$.

\subsection{Particular cases $k=1,2,3,4$.}\label{SS:2.3}
We recall from the introduction that $\lambda_{1,h}$ and $\lambda_{2,h}$ (which for eigenfunctions of the form $u_{p,q}(x,y)$ correspond to $(p,q) = (0,0), (1,0)$ respectively) are Courant-sharp via Courant's nodal domain theorem and orthogonality of eigenfunctions.
We note that $\lambda_{3,h}(S)$ is not Courant-sharp since it corresponds to the case
where $(p,q) = (0,1)$ so $\lambda_{3,h}(S)=\lambda_{2,h}(S)$.

Consider $\lambda_{4,h}(S)$ with $h>0$. Then $p = q= 1$ and the corresponding eigenfunction is
\begin{equation*}
u_{1,1}(x,y) = \frac{1}{\cos^2 \frac{\alpha_1}{2}} \,  \sin \left( \frac{\alpha_1 x}{\pi}\right)
\sin \left( \frac{\alpha_1 y}{\pi}\right),
\end{equation*}
 for $(x,y) \in (-\frac{\pi}{2},\frac{\pi}{2})^2$.\\
We see that $ x=0$ and $ y=0$ are nodal lines of $u_{1,1}(x,y)$ which partition $S$ into
$4$ nodal domains. There cannot be any further nodal lines of $u_{1,1}(x,y)$ as these would give rise to
additional nodal domains so we would get a contradiction to Courant's nodal domain theorem. Therefore
$\lambda_{4,h}(S)$ with $h\geq 0$ is Courant-sharp.\\
Hence,  from this point onwards, we are only interested in the remaining eigenvalues, i.e. in the eigenvalues $\lambda_{n,h}(S)$ with $n \geq 4$.
Note that, due to the monotonicity of the Robin eigenvalues with respect to $h$, we have  for $n \geq 4$,
\begin{equation}\label{eq:lam4}
\lambda_{n,h}(S) \geq \lambda_{4,h}(S) \geq \lambda_{4,0}(S) = 2\, .
\end{equation}

\subsection{Symmetry properties.}\label{s8}
The use of symmetries was quite powerful in the context of the Neumann case, \cite{HPS1},
 via an argument due to Leydold, \cite{Ley}. That is, a Courant nodal theorem for eigenfunctions that satisfy
certain symmetry properties. In addition, the number of nodal domains inherits some particular properties from these symmetries.
The goal of this subsection is to show that this invariance by symmetry is common to all the Robin problems on the interval and the square.

\subsubsection{Symmetry of Robin eigenfunctions in 1D.}
We recall that $h=0$ corresponds to  the Neumann case and $h=+\infty$ corresponds to  the Dirichlet case.
The Robin condition for $ [-\frac{\ell}{2},\frac{\ell}{2}]$ reads
$$
\frac{du}{dx} (-\ell /2)  = h u (-\ell /2) \,,\, \frac{du}{dx} (\ell /2)  = - h u (\ell /2) \,.
$$
We also observe the following invariance by symmetry.\\
 \begin{lem}\label{lemsymrobin}
 If $u$ is an eigenfunction of the $1D$-Robin problem, the function $ \tilde u(x)= u (-x)$ is also an eigenfunction of the same problem.
 \end{lem}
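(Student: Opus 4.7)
The plan is to verify directly that $\tilde u(x) = u(-x)$ satisfies both the eigenvalue equation and the Robin boundary conditions on $[-\ell/2,\ell/2]$ with the same eigenvalue as $u$. The key observation is that the interval is symmetric about the origin, the operator $-d^2/dx^2$ commutes with the reflection $x \mapsto -x$ (the chain rule introduces two sign flips that cancel), and the two boundary conditions are exchanged under the reflection while retaining the same form because both endpoints carry the same Robin parameter $h$.

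Concretely, I would first differentiate: $\tilde u'(x) = -u'(-x)$ and $\tilde u''(x) = u''(-x)$. If $-u''(y) = \lambda u(y)$ on $(-\ell/2, \ell/2)$, then setting $y = -x$ gives $-\tilde u''(x) = -u''(-x) = \lambda u(-x) = \lambda \tilde u(x)$, so $\tilde u$ solves the same eigenvalue equation with the same eigenvalue $\lambda$.

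Next I would check the two boundary conditions. At $x = -\ell/2$:
$$\tilde u'(-\ell/2) = -u'(\ell/2) = -\bigl(-h\, u(\ell/2)\bigr) = h\, u(\ell/2) = h\, \tilde u(-\ell/2),$$
using the boundary condition of $u$ at $\ell/2$. At $x = \ell/2$:
$$\tilde u'(\ell/2) = -u'(-\ell/2) = -h\, u(-\ell/2) = -h\, \tilde u(\ell/2),$$
using the boundary condition of $u$ at $-\ell/2$. Both Robin conditions are thus satisfied by $\tilde u$.

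There is no real obstacle here; the statement is essentially a consequence of the symmetry of the interval and of the Robin boundary operator under reflection. One might alternatively give a more abstract proof by noting that the quadratic form $u \mapsto \int_{-\ell/2}^{\ell/2} |u'|^2\,dx + h(|u(-\ell/2)|^2 + |u(\ell/2)|^2)$ is invariant under $u \mapsto u(-\,\cdot\,)$, so the reflection commutes with the self-adjoint Robin operator and therefore preserves each eigenspace; but the direct verification above is shorter and is what I would write.
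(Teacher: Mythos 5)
Your proof is correct and follows exactly the route the paper intends: the paper writes the Robin conditions at $\pm\ell/2$ immediately before the lemma and treats the invariance under $x\mapsto -x$ as an immediate observation, which is precisely the direct verification you carry out. Nothing is missing.
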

 Hence, we necessarily have (using the conservation of the norm) $ u(- x) = \pm u (x)$. Moreover, if $ u(0)\neq 0$, we have $ u(-x) = u (x)$ and if $ u'(0)\neq 0$ we get $ u(-x) =- u (x)$.
 Therefore, the eigenfunctions $ u_p$ (see \eqref{form1} and \eqref{form2}) are alternately symmetric and antisymmetric:
\begin{equation}\label{eq:2.8a}
u_p (-x) = (-1)^{p} u_{p}(x)\, ,
\end{equation}
like in the Dirichlet or Neumann case.

\subsubsection{Symmetry of Robin eigenfunctions in 2D.}

In 2D, we now consider the possible symmetries of a general eigenfunction associated with
the eigenvalues $\lambda_{n,h}$ of $(-\frac{\pi}{2},\frac{\pi}{2})^2$ which reads, 
\begin{equation}\label{eq:2.9}
   u(x,y) = \sum_{i,j : \lambda_{n,h}(S) = \pi^{-2}(\alpha_{i}^2 + \alpha_{j}^2)}
    a_{ij}\,  u_{i}(x)u_{j}(y)\,,
 \end{equation}
 where $ u_{p}$ (or $ u_{p,h}$  if we want to mention the reference to the Robin parameter) is the $ (p+1)-st$ eigenfunction of the $h$-Robin problem in $ (-\frac{\pi}{2},\frac{\pi}{2})$.\\
 By considering the transformation $ (x,y) \mapsto (-x,-y)$, we obtain
 \begin{equation}\label{eq:2.9b}
   u(-x,- y) = \sum_{i,j : \lambda_{n,h}(S) = \pi^{-2}(\alpha_{i}^2 + \alpha_{j}^2)}
    a_{ij}\, (-1)^{i+j}  u_{i}(x)u_{j}(y)\,.
 \end{equation}
 \begin{rem}\label{remsym}
 We note that if $(i+j)$ is odd for any pair $(i,j)$ such that $\lambda_{n,h}(S) = \pi^{-2}(\alpha_{i}^2 + \alpha_{j}^2)$, then we get by \eqref{eq:2.9b}, $ u(-x,-y) = - u(x,y)$ and as a consequence $u$ has an even number of nodal domains.
 As we shall see later, other symmetries related to the finite group generated by the identity and the symmetries $ (x,y)\mapsto (-x,y)$ and $ (x,y)\mapsto (x,-y)$ can be considered {(see \cite{GHPS})}.
 \end{rem}

In what follows, we obtain an upper bound for the number of Courant-sharp Robin eigenvalues of $S$
via arguments that do not depend on the parameter $h$.

\section{Upper  bound for the number of Courant-sharp Robin eigenvalues of a square.}\label{s3}
In this section, we prove $h$-independent bounds  for the number of Courant-sharp Robin eigenvalues.
This was indeed the first step proposed by Pleijel \cite{Pl} in the Dirichlet case to reduce the analysis of the Courant-sharp cases to the analysis of finitely many eigenvalues.  His proof was a combination of the Faber-Krahn inequality  and the Weyl formula. In the Neumann case considered in \cite{HPS1}, a new difficulty arises  as it is not possible to apply the Faber-Krahn inequality to the elements of the nodal partition whose boundaries touch the boundary of the square at more than isolated points. In this section, we extend the analysis to the Robin case.

\subsection{Lower bound for the Robin counting function.}

Recall that for $\lambda>0$, the Robin counting function for the corresponding
eigenvalues of $\Omega$ is defined as
\begin{equation}\label{eq:cfR}
  N_{\Omega}^{R,h}(\lambda) := \#\{k \in \N :  k \geq 1, \, \lambda_{k,h}(\Omega) < \lambda\}.
\end{equation}
Similarly we have the Dirichlet counting function
\begin{equation}\label{eq:cfD}
  N_{\Omega}^{D}(\lambda) := \#\{k \in \N :  k \geq 1, \, \lambda_{k}^D(\Omega) < \lambda\},
\end{equation}
and the Neumann counting function
\begin{equation}\label{eq:cfN}
  N_{\Omega}^{Ne}(\lambda) := \#\{k \in \N :  k \geq 1, \, \lambda_{k}^N(\Omega) < \lambda\}.
\end{equation}

  Due to the monotonicity of the Robin eigenvalues with respect to $h \in [0,+\infty)$, it is rather easy to have a lower bound for the $N_{\Omega}^{R,h} (\lambda)$. In particular, we have
 $$
 N_{\Omega}^{R,h}(\lambda) \geq N_{\Omega}^{R,+\infty} (\lambda) = N_{\Omega}^{D}(\lambda)\,.
 $$

We also recall that for the Neumann counting function of $S$, we have
\begin{equation}
\label{eq:WeylN}
 \frac{\pi}{4}\lambda + 2\lfloor\sqrt{\lambda}\rfloor +1 \geq  N_{S}^{Ne} (\lambda)>\frac{\pi}{4}\lambda,
\end{equation}
and for the Dirichlet counting function of $S$, if $\lambda \geq 2$, we have by \cite{Pl},
\begin{equation}
\label{eq:WeylD}
N_{S}^{D} (\lambda)>\frac{\pi}{4}\lambda - 2 \sqrt{\lambda} + 1 \,.
\end{equation}

Assume that $\lambda \geq 2$  (this is true for $n \geq 4$ by \eqref{eq:lam4}). Then, by  \eqref{eq:WeylD}
and monotonicity of the Robin eigenvalues with respect to $h$,
\begin{equation}
  N_{S}^{R,h}(\lambda) \geq N_{S}^{D}(\lambda)
  > \frac{\pi}{4}\lambda - 2\sqrt{\lambda} + 1. \label{eq:R1}
\end{equation}

With $\lambda=\lambda_{n,h}>\lambda_{n-1,h}$ and $\Psi$ an associated
eigenfunction, \eqref{eq:R1} becomes
\begin{equation}
\label{eq:R2}
n > \frac{\pi}{4}\lambda_{n,h} - 2\sqrt{\lambda_{n,h}} + 2\,.
\end{equation}

 We now work analogously to the proof of Proposition 2.1 in \cite{HPS1}.
Denote by $\Omega^{\text{inn}}$ the union of nodal domains of $\Psi$ whose boundaries do not
touch the boundary of $\Omega$ (except at isolated points), and $\mu^{\text{inn}}(\Psi)$ the number of
nodal domains of $\Psi$ in $\Omega^{\text{inn}}$. Similarly denote by $\Omega^{\text{out}}$ the nodal
domains in $\Omega \setminus \Omega^{\text{inn}}$, and $\mu^{\text{out}}(\Psi)$ the number of
nodal domains of $\Psi$ in $\Omega^{\text{out}}$.
 We have that $$\mu^{\text{inn}}(\Psi)= \mu(\Psi)- \mu^{\text{out}}(\Psi)$$ and we require an upper bound
for $\mu^{\text{out}}(\Psi)$.

 \subsection{Counting the number of nodal domains touching the boundary for the Robin problem.}\label{s5}
   We give a proof which  holds for all the Robin problems in the square, except the Dirichlet case.
 We make use of the following theorem that is due to Sturm, \cite{St, BH2}.
\begin{thm}[Sturm, 1836]\label{T-st2r}
Let $u = a_m u_m + \cdots + a_n u_n$ be a  non-trivial linear
combination of eigenfunctions of the 1D-Robin
  problem in $(-\frac{\pi}{2},\frac{\pi}{2})$, with $1 \leq m \le n$, and $\{a_j, m \leq j \le
n\}$ real constants such that $a_m^2 + \cdots + a_n^2 \not = 0$. Then,
the function $u$ has at least $(m-1)$, and at most $(n-1)$ zeros in $(-\frac{\pi}{2},\frac{\pi}{2})$.
\end{thm}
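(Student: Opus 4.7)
This is Sturm's classical oscillation theorem from 1836, restated for the $1$D-Robin problem on $(-\pi/2, \pi/2)$. My plan is to prove the two bounds separately: first the upper bound of $n-1$ zeros, from which the lower bound of $m-1$ zeros then follows by a short $L^2$-orthogonality argument.

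For the upper bound, I would proceed by induction on $r := n - m + 1$. The base case $r=1$ asserts that the single eigenfunction $u_n$ has exactly $n-1$ zeros, which follows directly from the explicit formulas \eqref{form1}--\eqref{form2} together with the localization $\alpha_p \in [p\pi, (p+1)\pi)$: one counts roots of $\cos(\alpha_p x/\pi)$ or $\sin(\alpha_p x/\pi)$ in $(-\pi/2, \pi/2)$, noting that $u_n(\pm \pi/2) \neq 0$ for finite $h$. For the inductive step we may renormalise so that $a_n \neq 0$ and seek a contradiction from the assumption that $u$ has $N \geq n$ zeros. I would employ the Wronskian $W := u\, u_n' - u'\, u_n$, which via the eigenvalue equations $u_k'' = -\lambda_k u_k$ is seen to satisfy
\[
W'(x) \;=\; u_n(x) \sum_{k=m}^{n-1} a_k (\lambda_k - \lambda_n) u_k(x).
\]
On any sub-interval where $u_n$ has constant sign, the quotient $W'/u_n$ is a non-trivial linear combination of $u_m, \ldots, u_{n-1}$ to which the inductive hypothesis applies, bounding its zeros by $n-2$. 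A standard Rolle-type interlacing argument among the $N \geq n$ zeros of $u$, the $n-1$ zeros of $u_n$, and the consequent zeros of $W$ then produces more zeros of $W'/u_n$ than are available, the desired contradiction.

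The lower bound then follows cleanly. Assume $u$ has only $N < m-1$ zeros $x_1 < \cdots < x_N$; we may reduce to the case that $u$ changes sign at each $x_i$ via a perturbation $u \mapsto u + \varepsilon u_m$ which keeps us in the ambient span and makes all zeros simple for generic small $\varepsilon$. Consider $V := \mathrm{span}(u_1, \ldots, u_{N+1})$, of dimension $N+1 \leq m-1$. The $N$ evaluation constraints $w(x_i) = 0$ admit a non-trivial kernel in $V$, yielding some $w \in V \setminus \{0\}$ vanishing at every $x_i$; by the upper bound just established, $w$ has no further zeros. Hence the nodal sign pattern of $w$ on the $N+1$ sub-intervals matches that of $u$ up to a global sign, and after flipping $w$ if necessary, $u \cdot w \geq 0$ pointwise and not identically zero, whence
\[
\int_{-\pi/2}^{\pi/2} u(x)\, w(x)\, dx \;>\; 0.
\]
On the other hand, $u \in \mathrm{span}(u_m, \ldots, u_n)$ is $L^2$-orthogonal to each $u_k$ with $k \leq m-1$, and in particular to all of $V \subset \mathrm{span}(u_1, \ldots, u_{m-1})$, forcing $\int u\, w = 0$, a contradiction.

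The main obstacle is the Wronskian/Rolle bookkeeping in the upper bound: one must carefully manage the sign of $W$ between successive zeros of $u_n$, and unlike the Dirichlet or Neumann cases, the Robin boundary conditions do not automatically eliminate boundary contributions should one wish to reformulate via integration by parts. A secondary technical issue is the possible presence of multiple zeros of the linear combination $u$, handled by analyticity of $u$ together with the small perturbation sketched above.
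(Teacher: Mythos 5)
The paper does not actually prove this statement: it is quoted as a classical theorem of Sturm, with \cite{St,BH2} cited for the proof (the reference \cite{BH2} is devoted precisely to the history of this result and to a complete argument). So your attempt has to stand on its own, and it does not. The lower-bound half (orthogonality of $u$ to ${\rm span}(u_1,\dots,u_{m-1})$ plus an auxiliary $w$ in the span of the first $N+1$ eigenfunctions with the same sign pattern) is the standard Liouville argument and is fine in outline, although the perturbation $u\mapsto u+\varepsilon u_m$ is both unjustified and unnecessary (just take the $x_i$ to be the points where $u$ changes sign), and you need the upper bound to count zeros of $w$ \emph{with multiplicity} so that each $x_i$ is forced to be a sign change of $w$. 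The real issue is that this half rests entirely on the upper bound, and that is where the genuine gap lies.

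The inductive step for the upper bound is not ``a standard Rolle-type interlacing argument''; as far as I can see the count does not close. With $W=uu_n'-u'u_n$ one has $W'=u_nv$, $v=\sum_{k=m}^{n-1}a_k(\lambda_k-\lambda_n)u_k$, and $W(\pm\pi/2)=0$ --- note, contrary to your closing remark, that the Robin conditions \emph{do} kill the boundary Wronskian: if $f'+hf=0$ and $g'+hg=0$ at an endpoint then $fg'-f'g=0$ there. But now run the bookkeeping: the $n-1$ zeros of $u_n$ cut the interval into $n$ pieces; on a piece containing $N_j$ zeros of $u$, Rolle applied to $u/u_n$ yields only $N_j-1$ zeros of $W$, and passing from zeros of $W$ to zeros of $W'=u_nv$ costs one more per piece. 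Distributing $N=2n-2\ge n$ zeros of $u$ as one in each boundary piece and two in each interior piece then forces \emph{no} zeros of $v$ at all, so there is no contradiction with the inductive bound of $n-2$ for $v$. The upper bound is genuinely the hard half of Sturm's theorem: it is not a consequence of orthogonality or of naive Rolle counting, and the known proofs use either Sturm's deformation argument (the number of zeros of $x\mapsto\sum_ka_ke^{-\lambda_kt}u_k(x)$ is non-increasing in $t$, a parabolic lap-number statement that does hold for Robin conditions) or the total positivity of the Green's function in the spirit of Kellogg and Gantmacher--Krein. This is exactly the point emphasised in \cite{BH2}, and it is why the paper cites that reference rather than reproving the theorem.
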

As observed originally by $\AA$.~Pleijel \cite{Pl},
the analysis of the zeros of linear combinations of eigenfunctions appear in the following context.
We observe that if an eigenfunction associated with $\lambda_{n,h}$ (see \eqref{eq:2.9}) satisfies the Robin condition on the square, then its restriction to one side satisfies the Robin condition relative to the interval and is not zero (except of course in the Dirichlet case).
In general, when the multiplicity is not one, this is no longer an eigenfunction but a linear combination of eigenfunctions  on the segment $ (-\frac{\pi}{2},\frac{\pi}{2})$.

For example, the restriction to one side of the square, say $ x=\frac{\pi}{2}$, is a linear combination of eigenfunctions on the segment $ (-\frac{\pi}{2},\frac{\pi}{2})$:
  $$
   u( \pi /2,y) = \sum_{i,j : \lambda_{n,h}(S) = \pi^{-2}(\alpha_{i}^2 + \alpha_{j}^2)}
    a_{ij}\,  u_{i}(\pi /2 )u_{j}(y)\,.
   $$
 We can then use Theorem~\ref{T-st2r} which gives a lower-bound on the number of zeros of
    $ u(\pi /2,y)$ in $ (-\frac{\pi}{2},\frac{\pi}{2})$ by
    $$  i_n(h):= \min (i : \lambda_{n,h}(S) = \pi^{-2}(\alpha_{i}^2 + \alpha_{j}^2))\, ,$$\,
    and an upper-bound by
    \begin{equation}\label{defjn}
      j_n(h):=\max ( j : \lambda_{n,h}(S) = \pi^{-2}(\alpha_{i}^2 + \alpha_{j}^2))\,.
     \end{equation}
  We have
   $$\lambda_{n,h}(S) = (\alpha_{i_n(h)}^2 + \alpha_{j_n(h)}^2)/\pi^2 \geq i_n(h)^2 + j_n(h)^2 \geq j_n(h)^2,$$
   which gives that $$j_n(h)\leq \sqrt{\lambda_{n,h}(S)}\,.
   $$

We can argue in the same way for the other sides of the square.
Therefore, the number of zeros of $u(x,y)$ on the boundary of $S$ is bounded from above by $4 \sqrt{\lambda_{n,h}(S)}$. \\
    Coming back to the number of ``boundary'' nodal domains, we have the following lemma.

\begin{lem}\label{lemmuout}
  Let $\lambda$ be a Robin eigenvalue of $S$ with $h < +\infty$. If $\Psi$ is a Robin eigenfunction associated to $\lambda$, then
  \begin{equation}
  \mu^{out}(\Psi) \leq 4 \sqrt{\lambda}\,.
  \end{equation}
\end{lem}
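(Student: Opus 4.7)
The strategy is to bound $\mu^{\text{out}}(\Psi)$ by the number of arcs into which $\partial S$ is decomposed by the zero set $\{\Psi|_{\partial S}=0\}$, and then bound the number of such zeros by $4\sqrt{\lambda}$ using Sturm's theorem side by side. Most of the pieces are already laid out in the paragraphs preceding the lemma; the task is to assemble them carefully, paying attention to non-triviality of the boundary restrictions and to what happens at the corners.

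First I would establish that, for $h<+\infty$, the restriction $\Psi|_{\sigma}$ to each of the four sides $\sigma$ of $S$ is a non-trivial linear combination of 1D Robin eigenfunctions on $(-\pi/2,\pi/2)$ with parameter $h$. Using expansion \eqref{eq:2.9}, the restriction to, say, $x=\pi/2$ is $\sum_{i,j} a_{ij}\,u_i(\pi/2)\,u_j(y)$, which is a linear combination of $\{u_j\}_{j}$. If this vanished identically, then $\Psi$ and (via the Robin condition) $\partial_\nu\Psi$ would both vanish on a whole side, and Aronszajn's unique continuation principle would force $\Psi\equiv 0$, contradicting $\Psi\neq 0$. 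Note that this step genuinely uses $h<+\infty$: in the Dirichlet case the factors $u_i(\pi/2)$ are all zero, so the argument collapses, consistent with the statement of the lemma.

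Next I would apply Theorem~\ref{T-st2r} on each side. The non-zero indices $j$ appearing in the restriction satisfy $\pi^{-2}(\alpha_{i}^2+\alpha_{j}^2)=\lambda$ for some $i$, so $j\leq j_n(h)$ defined in \eqref{defjn}. Sturm's theorem therefore gives at most $j_n(h)-1$ interior zeros on that open side. Since $\alpha_p\in[p\pi,(p+1)\pi)$, we have $j_n(h)^2\leq\alpha_{j_n(h)}^2/\pi^2\leq\lambda$, hence each open side contributes at most $\sqrt{\lambda}-1$ zeros. Summing over the four sides and adding at most four possible zeros at the corners of $S$ yields
\begin{equation*}
  \#\{\Psi|_{\partial S}=0\} \;\leq\; 4(\sqrt{\lambda}-1)+4 \;=\; 4\sqrt{\lambda}.
\end{equation*}

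To finish, I would use the topology of $\partial S$, which is a Jordan curve. A set of $Z\geq 1$ points on $\partial S$ partitions it into exactly $Z$ open arcs; if $Z=0$, there is one arc, namely $\partial S$ itself. Each nodal domain in $\Omega^{\text{out}}$ meets $\partial S$ in at least one such arc, and distinct nodal domains use disjoint arcs, so $\mu^{\text{out}}(\Psi)\leq\max(Z,1)\leq 4\sqrt{\lambda}$ (the case $Z=0$ being compatible with the bound since $\lambda\geq 2$ by \eqref{eq:lam4}). The main obstacle I anticipate is the non-triviality check in the first step: it needs to be handled correctly on every side, and the appeal to unique continuation should be made explicit, since without it one could not legitimately invoke Sturm's theorem.
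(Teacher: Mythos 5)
Your proposal follows the same route as the paper's own argument: restrict $\Psi$ to each side of the square, apply Sturm's Theorem~\ref{T-st2r}, bound the top index by $j_n(h)\leq\sqrt{\lambda}$, and convert boundary zeros into arcs and then into boundary nodal domains. Two of your additions are more careful than the text: the paper merely asserts that the restriction to a side ``is not zero (except of course in the Dirichlet case)'', whereas you justify this by unique continuation from the vanishing Cauchy data $(\Psi,\partial_\nu\Psi)$ on a side, correctly isolating where $h<+\infty$ enters; and the paper never spells out the passage from the zero count to $\mu^{\text{out}}$, which you do via the Jordan-curve arc count.

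However, there is an off-by-one in your application of Sturm's theorem, and it is doing real work. In the paper's convention $u_j$ is the $(j+1)$-st eigenfunction of the $1$D Robin problem ($j\in\mathbb{N}$, $j\geq 0$), so a combination whose top $0$-based index is $j_n(h)$ is, in the $1$-based notation of Theorem~\ref{T-st2r}, a combination up to the $(j_n(h)+1)$-st eigenfunction and may have up to $j_n(h)$ zeros in the open side --- not $j_n(h)-1$ (indeed $u_{j_n}$ itself has exactly $j_n$ interior zeros). This is also what the paper states: the upper bound for one side is $j_n(h)$, not $j_n(h)-1$. With the corrected count your tally becomes $4j_n(h)+4\leq 4\sqrt{\lambda}+4$, so the arithmetic $4(\sqrt{\lambda}-1)+4=4\sqrt{\lambda}$ no longer closes. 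To be fair, the paper reaches the constant $4\sqrt{\lambda}$ only by not counting possible corner zeros at all, and your (correct) ``$+4$'' exposes this: corner zeros can occur (for instance, antisymmetric combinations vanish at two corners), so the honest output of this line of argument is $\mu^{\text{out}}(\Psi)\leq 4\sqrt{\lambda}+4$. The discrepancy is immaterial for the use made of the lemma in Subsection~\ref{ss3.3} (the threshold $598$ would move only slightly), but as written your proof of the stated inequality rests on a compensating error rather than on a correct count, and you should either adopt the paper's convention of bounding only the zeros interior to the sides or accept the weaker constant.
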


\begin{rem}
There are other proofs given in Pleijel \cite{Pl} and \cite{HPS1}, but the one given above is much more general and not  restricted
to two-dimensional eigenspaces (and also not based on an explicit knowledge of the eigenfunctions). On the other hand, the claim in \cite{HPS1} is much more  involved. It says that taking the whole boundary into consideration, the number of points  on
the boundary in the nodal set of an eigenfunction $\cos \theta\, u_i(x)u_j(y) + \sin \theta\, u_j(x)u_i(y)$ ($i\neq j$)  is comparable with $i+j$ (See Section 5 of \cite{HPS1}). The proof\footnote{There is a small gap in the proof which can be repaired using Theorem~\ref{T-st2r} due to Sturm.} is restricted to eigenfunctions whose corresponding eigenvalues have multiplicity 2. It would be interesting to prove the same result for the Robin case for $h < +\infty\,$.
\end{rem}

\subsection{Upper bound for Courant-sharp Robin eigenvalues of a square.}\label{ss3.3}
By Lemma \ref{lemmuout}, we have
\begin{equation}
\label{eq:R3}
\mu^{\text{inn}}(\Psi)\geq \mu(\Psi)-4\sqrt{\lambda_{n,h}}\,.
\end{equation}

Now, $\Omega^{\text{inn}}=\bigcup_i \omega^{\text{inn}}_{i}$ is a finite
union of nodal domains of $\Psi$.  Assuming that $\Omega^{\text{inn}}$ is not empty, we get, on each $\omega^{\text{inn}}_{i}$, by Faber-Krahn (see~\cite{Pl}), that
\begin{equation}\label{eq:R3a}
\frac{A(\omega^{\text{inn}}_i)}{\pi\mathbf{j}^2}\geq \frac{1}{\lambda_{n,h}}\,,
\end{equation}
where $A(\omega^{\text{inn}}_i)$ denotes the area of
$\omega^{\text{inn}}_i$ and $\mathbf{j}$ denotes
the first positive zero of the Bessel function $J_0$. Adding, and
invoking~\eqref{eq:R3}, we find
\begin{equation*}
\frac{\pi}{\mathbf{j}^2}=\frac{A(S)}{\pi\mathbf{j}^2}>
\frac{A(\Omega^{\text{inn}})}{\pi \mathbf{j}^2}\geq \frac{\mu^{\text{inn}}(\Psi)}{\lambda_{n,h}}\geq \frac{\mu(\Psi)-4\sqrt{\lambda_{n,h}}}{\lambda_{n,h}},
\end{equation*}
from which we obtain
\begin{equation}\label{eq:R4}
\frac{\pi}{\mathbf{j}^2} \geq \frac{\mu(\Psi)-4\sqrt{\lambda_{n,h}}}{\lambda_{n,h}}\,.
\end{equation}
Due to \eqref{eq:R3}, this inequality is still true if $\Omega^{\text{inn}}$ is empty.

If we are in the Courant-sharp situation, then $\mu(\Psi)=n$.
Combining~\eqref{eq:R2} and~\eqref{eq:R4}, we find that
\begin{equation}
\label{eq:R5}
0.543229\approx \frac{\pi}{\mathbf{j}^2}
> \frac{n-4\sqrt{\lambda_{n,h}}}{\lambda_{n,h}}
> \frac{\pi}{4} + \frac{2}{\lambda_{n,h}}-\frac{6}{\sqrt{\lambda_{n,h}}}\, .
\end{equation}
The mapping
\[
\lambda\mapsto f(\lambda)= \frac{2}{\lambda} - \frac{6}{\sqrt{\lambda}} + \frac{\pi}{4} - \frac{\pi}{\mathbf{j}^2}
\]
is increasing for $\lambda \geq 4/9$. Moreover, $f(597)<0$ and $f(598)>0$. Thus,
if $\lambda_{n,h}\geq 598$, we violate inequality~\eqref{eq:R5},  and
we are not in the Courant-sharp situation.
So, similarly to \cite{Pl} and \cite[Proposition 2.1]{HPS1}, we obtain the following proposition.
\begin{prop}
\label{lem:R1}
If $\lambda_{n,h} \geq 598$ is a Robin eigenvalue of the Laplacian for $S$, then it is not Courant-sharp.
Alternatively, any Courant-sharp Robin eigenvalue of $S$, $\lambda_{n,h} < 598$.
\end{prop}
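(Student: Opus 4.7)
The plan is to combine a Weyl-type lower bound on the counting function with a Faber-Krahn inequality applied to the interior nodal domains, turning the Courant-sharp condition $\mu(\Psi) = n$ into an upper bound on $\lambda_{n,h}$ that fails beyond a computable threshold.

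First I would use the monotonicity of $\lambda_{k,h}$ in $h$ together with Pleijel's lower bound \eqref{eq:WeylD} on the Dirichlet counting function to obtain, for $\lambda \geq 2$,
\[
N^{R,h}_S(\lambda) \geq N^{D}_S(\lambda) > \frac{\pi}{4}\lambda - 2\sqrt{\lambda} + 1.
\]
Specialising to $\lambda = \lambda_{n,h}$ and using $\lambda_{n,h} > \lambda_{n-1,h}$ yields the ``Weyl side'' $n > \frac{\pi}{4}\lambda_{n,h} - 2\sqrt{\lambda_{n,h}} + 2$. This is available for all $n \geq 4$ thanks to \eqref{eq:lam4}.

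Next, for an eigenfunction $\Psi$ associated with $\lambda_{n,h}$ I would split the nodal domains into the interior family $\Omega^{\text{inn}}$ and the boundary family $\Omega^{\text{out}}$. Lemma~\ref{lemmuout} controls the latter by $\mu^{\text{out}}(\Psi) \leq 4\sqrt{\lambda_{n,h}}$. Each component of $\Omega^{\text{inn}}$ is a Dirichlet nodal domain for $\Psi$, so Faber-Krahn yields $A(\omega^{\text{inn}}_i) \geq \pi \mathbf{j}^2 / \lambda_{n,h}$; summing and using $A(S) = \pi^2$ gives
\[
\frac{\pi}{\mathbf{j}^2} = \frac{A(S)}{\pi \mathbf{j}^2} \geq \frac{\mu^{\text{inn}}(\Psi)}{\lambda_{n,h}} \geq \frac{\mu(\Psi) - 4\sqrt{\lambda_{n,h}}}{\lambda_{n,h}}.
\]
(The inequality is trivially true if $\Omega^{\text{inn}}$ is empty.) Imposing Courant-sharpness $\mu(\Psi) = n$ and substituting the Weyl lower bound for $n$ produces
\[
\frac{\pi}{\mathbf{j}^2} > \frac{\pi}{4} - \frac{6}{\sqrt{\lambda_{n,h}}} + \frac{2}{\lambda_{n,h}}.
\]

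To close the argument I would set $f(\lambda) = \frac{2}{\lambda} - \frac{6}{\sqrt{\lambda}} + \frac{\pi}{4} - \frac{\pi}{\mathbf{j}^2}$, check by differentiation that $f$ is increasing for $\lambda \geq 4/9$, and verify numerically that $f(597) < 0 < f(598)$. Hence $\lambda_{n,h} \geq 598$ forces $f(\lambda_{n,h}) \geq 0$, contradicting the displayed inequality; so any Courant-sharp Robin eigenvalue must satisfy $\lambda_{n,h} < 598$. The only genuinely new ingredient compared with the Dirichlet case of \cite{Pl} is the control of $\mu^{\text{out}}$ in Lemma~\ref{lemmuout}, which is where the Robin hypothesis $h < +\infty$ actually enters; the rest of the argument is bookkeeping with the Weyl lower bound and a one-variable numerical check, and the main subtlety to watch for is ensuring the Faber-Krahn inequality is only invoked on genuine Dirichlet domains in $\Omega^{\text{inn}}$.
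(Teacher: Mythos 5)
Your proposal is correct and follows essentially the same route as the paper: the Dirichlet--Weyl lower bound via monotonicity, the inner/outer decomposition with Lemma~\ref{lemmuout} controlling $\mu^{\text{out}}$, Faber--Krahn on the interior nodal domains, and the monotone one-variable function $f$ with the numerical check $f(597)<0<f(598)$. Nothing further is needed.
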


\subsection{Proof of Theorem \ref{prop:p1}.}\label{ss3.4}

By invoking the upper bound of \eqref{eq:WeylN}, we obtain an upper bound for $n$ such that $\lambda_{n,h}(S) <598$.
Indeed, suppose $\lambda_{n,h}(S) <598$, then
\begin{align}
n-1 = N_{S}^{R,h}(\lambda_{n,h}(S)) &= \#\{k \in \N :  k \geq 1, \, \lambda_{k,h}(S) < \lambda_{n,h}(S)\} \notag\\
& \leq \frac{\pi}{4} \lambda_{n,h}(S) + 2 \lfloor \sqrt{\lambda_{n,h}(S)} \rfloor + 1 < 518.67.
\end{align}
Hence we have shown Theorem \ref{prop:p1}.\\

We remark that the above arguments do not depend on the Robin parameter $h$.
In the 
sections  that follow, we consider the case where $h$ is large and improve the result.

\section{Analysis as $h\rightarrow +\infty$.}\label{S:hlarge}
In this section we show that for $h$ sufficiently large, the Courant-sharp Robin eigenvalues of the square are the same as those in the Dirichlet case, \cite{Pl,BH1}, that is the first, second and fourth, except possibly the fifth which we deal with in Section~\ref{sp5}. We first briefly revisit the strategy that was used by Pleijel for the Dirichlet problem.
\subsection{Pleijel's approach for Dirichlet.}\label{ss:hlarge1}

Let us come back to Pleijel's argument. We recall from \eqref{eq:WeylD} that if $ \lambda_n \geq 2$ is Courant-sharp, then
\begin{equation}\label{i1}
n >\frac \pi 4 \lambda_n - 2 \sqrt{\lambda_n}  + 2\,.
\end{equation}
On the other hand, if $\lambda_n$ is Courant-sharp, the Faber-Krahn inequality gives the necessary condition
\begin{equation}\label{i2}
\frac{n}{\lambda_n} \leq \pi {{\bf j}}^{-2} <  0.54323\,.
\end{equation}
Recall that ${\bf j}$ is the smallest positive zero of  $J_0$ the
Bessel function of order $0$, and that $ \pi {{\bf j}}^2$ is
the ground state energy of the disc of area $1$.
Combining \eqref{i1} and \eqref{i2}, leads to the inequality
\begin{equation}\label{i3a}
\pi {{\bf j}}^{-2} > \frac \pi 4  - 2 \lambda_n^{-\frac 12}   + 2 \lambda_n^{-1}\,,
\end{equation}
and to
\begin{equation}\label{i3}
\lambda_n \le  50 \,.
\end{equation}

Then the proof is achieved in the following steps (see \cite{BH1} for the full details).
\begin{itemize}
\item  By a direct computation of the quotient of $\frac{n}{\lambda_n}$, it is possible to eliminate all the eigenvalues except for $n=1,2,4,5,7$ and $9\,$.
\item The eigenvalues  for $n=7$ and $n=9$ are eliminated by symmetry arguments  (analogously to Remark~\ref{remsym}).
\item The final step is to analyse the fifth eigenvalue for which a specific analysis of the nodal structure can be done
(see \cite{BH1}).\end{itemize}
In the 
subsections  that follow, we  work through
these steps and investigate the extent to which they still work for $h$ large.

\subsection{Faber-Krahn for the Robin case.}\label{ssFK}
We recall the result of Bossel-Daners \cite{Bos, D}, which asserts that the Robin eigenvalues of the Laplacian
satisfy the following Faber-Krahn inequality. For a Lipschitz domain $\omega \subset \R^2$ and $h > 0$,
\begin{equation}\label{eq:FKR}
  \lambda_{1,h}(\omega) \geq \lambda_{1,h}(D_{\omega}),
\end{equation}
where $D_{\omega} \subset \R^2$ is a disc such that $A( D_{\omega} ) = A(\omega )$.

We note that this  analysis is only interesting for the nodal domains whose boundary meet the boundary of $\Omega$ along at least some arc. For the interior nodal domains, the
approach via the standard Faber-Krahn inequality still applies.
For the boundary domains, we have mixed boundary conditions with Robin on some arcs and Dirichlet on the remaining arcs. For a lower bound, by monotonicity, we can indeed use the
Faber-Krahn inequality (with  a Robin boundary condition on all the boundary).\\

Consider a scaling of the domain $\omega$ by $t>0$, $t\omega :=\{tx \in \R^2 : x \in \Omega\}$.
As observed by Antunes, Freitas and Kennedy, \cite{AFK}, the Robin eigenvalues satisfy the following scaling property.
\begin{equation}\label{eq:Rscale}
  \lambda_{n,h}(\omega) = t^2 \lambda_{n,h/t}(t\omega),
\end{equation}
A serious  issue here is that the scaling also affects the Robin parameter.  So, in particular, replacing $D$ by $D_1$, the disc of area $1$, we have
\begin{equation}\label{eq:Rscale2}
 \lambda_{1,h} (D_{\omega})= \lambda_{1, h A(\omega)^\frac 12 } ( D_1) / A(\omega)\,.
\end{equation}
When $h=+\infty$, the reference is $ \lambda_{1, +\infty}  ( D_1) $. In the Robin case, if we start from $h$ large, we will not necessarily have $h A(\omega)^\frac 12$ large if we use this inequality with $\omega$ a ``boundary'' nodal domain. Hence we have to be careful in the application of the Faber-Krahn argument. This is actually the main difficulty.

We recall the asymptotic behaviour of the first Robin eigenvalue as the Robin parameter tends to $+\infty$ or to $0$ (see, for example,  \cite{GN}). \\

We recall that $\lambda_{1,\tilde h} (D_1) \mapsto \lambda_{1,+\infty} (D_1)= \pi {\bf j}^2$ as $\tilde h\rightarrow +\infty$, and that there exists $c >0$ such that,  as $\tilde h\rightarrow +\infty$,
 \begin{equation}\label{maxlambda1}
 \lambda_{1,\tilde h} (D_1) =  \lambda_{1,+\infty } (D_1) - \frac {c}{\tilde h} + \mathcal O \left(\frac{1}{\tilde h^2}\right)\,.
 \end{equation}
We also recall that there exists $d >0$ such that as $\tilde h \rightarrow 0$,
  \begin{equation}\label{minlambda1}
 \lambda_{1,\tilde h} (D_1) = d \,  \tilde h + \mathcal O (\tilde h^2)\,.
 \end{equation}
We give the proof for completion. To determine the first eigenvalue for the disc of area $1$ and radius $\pi^{-1/2}$\,,
one looks for an eigenfunction of the form $J_0 (\alpha  \pi^{1/2} r)$  where the corresponding eigenvalue is $\pi \alpha^2$.
For the asymptotic behaviour near $h=0$ or $h=+\infty$, we use the Taylor expansion of $J_0$ or $J_0'$ at $\alpha=0$ and $\alpha ={\bf j}$.
The Robin condition\footnote{Note that there is a misprint in \cite{GN} after formula (3.9) for the Robin eigenvalue which is corrected here.} reads
   $$\alpha  \pi^{1/2} J'_0(\alpha) +  h J_0(\alpha)=0\,.$$
We recall that $J_0'(0) =0$ and $J_0 ''(0) < 0$. We get for $h \geq 0$, for the first solution $\alpha ^2  \pi^{1/2}J'_0(0) \sim -  h J_0(0)\,.$   Hence the corresponding eigenvalue satisfies as $h\rightarrow 0$,
 $$
 \lambda_{1,h} (D_1) = - (2\pi^{1/2} J_0(0))  /( J_0''(0)) \, h 
 + \mathcal O (h^2)\,.
 $$

We also have $J_0 ({\bf j})=0$ and $J'_0 ({\bf j}) \neq 0$.  With $\tau =\frac{1}{h}$, we write
 $$\tau \alpha  \pi^{1/2} J'_0(\alpha) +  J_0(\alpha)=0,$$ and expanding at $\alpha ={\bf j}$, we obtain:
$$
 \alpha = {\bf j} -  \pi^{\frac 12} {\bf j} \,\tau + \mathcal O (\tau^2)\,,
 $$
 and
 $$
 \pi \alpha^2 = \pi {\bf j}^2 - 2\pi^{\frac 32} {\bf j} ^2 \, \tau   +  \mathcal O (\tau^2)\,.
 $$
The proof gives an explicit value for the constants $c$ and $d$ in \eqref{maxlambda1} and \eqref{minlambda1}.\\

We will apply the Faber-Krahn inequality to a nodal domain of a Robin eigenfunction $u=u_{n,h}$ associated with $\lambda_{n,h}$.
We observe that an eigenfunction $u$ can be extended to all of $\R^2$ as a solution $\tilde u$ of
$-\Delta \tilde u = \lambda \tilde u$ (we have an explicit expression as a trigonometric polynomial). Hence the nodal sets of $\tilde u$ have a nice local structure (see P. B\'erard \cite{Be} for a survey) and
have the same properties as in the Dirichlet case. In particular, these nodal sets are locally Lipschitz domains (actually with piecewise analytic boundary). If we observe that a nodal set of $u$ is the intersection of a nodal set of $\tilde u$ with the square $S$, we immediately deduce that the $\omega_i^{\text{inn}}$ are Lipschitz domains.

The regularity of the ``boundary domains'' $\omega_j^{\text{out}}$ has to be analysed. By Lemma~\ref{lemmuout},
the nodal set intersects the boundary finitely many times, so $\partial \omega_j^{\text{out}}$ consists of a finite number of arcs belonging either to $S$ or to $\partial S$. So we can apply Theorem~4.1 of \cite{BG2}.  Alternatively, we can use the strategy given in Section~3 of \cite{K09} to obtain \eqref{eq:FKR} for these domains
(see also \cite[p. 3620]{K11}).  We will discuss the regularity of the nodal domains further in Section \ref{sp5}.\\

Note also that for a ``boundary'' domain $\omega_j^{\text{out}}$, $u \vert_{\omega_j^{\text{out}}}$ satisfies a mixed Robin-Dirichlet condition on its boundary but we can use the monotonicity with respect to the Robin parameter which leads to
 \begin{equation}\label{eq:4.9}
 \lambda_{n,h} \geq \lambda_{1,h} (\omega_j^{\text{out}})\,,
 \end{equation}
 and then use the pure Robin Faber-Krahn inequality.

\subsection{Pleijel's approach as $h\rightarrow  +\infty$\,.}
In light of what was recalled in Subsection \ref{ss:hlarge1} for $h=+\infty$, we now consider
the different steps in the limit $h\rightarrow +\infty$.

We first recall that the eigenvalues depend continuously on $h$ until $+\infty$, in particular
\begin{equation}\label{eq:4.10}
\forall n\in \mathbb N, \lim_{h\rightarrow +\infty} \lambda_{n,h} =\lambda_n^D\,.
\end{equation}

We keep the notation of the previous section. If we are in the Courant-sharp situation, then $\mu( u)=n$,
where $u$ is an eigenfunction associated with $\lambda_{n,h}$.\\

If there exists $\omega_i^{\text{inn}}$ such that $A(\omega_i^{\text{inn}}) \leq A(S)/n$, we are done like in the Dirichlet case. We combine  the latter inequality with inequality \eqref{eq:R3a} to obtain \eqref{i2}.
Together with \eqref{i1}, this gives $\lambda_{n,h} \leq  50$. In particular, for these eigenvalues $n$ is finite and using \eqref{eq:4.10} we get that for $h$ sufficiently large, \eqref{i2} is not satisfied for $n\neq 1,2,4,5,7,9$.

If not, the situation is more delicate, but we can assume that there exists $\omega_j^{\text{out}}$ such that
\begin{equation}\label{eq:4.11}
A(\omega_j^{\text{out}}) \leq A(S)/n\,,
\end{equation}
and we take one of smallest area with this property.\\
Combining~\eqref{eq:R2}, \eqref{eq:4.9}, \eqref{eq:FKR}, \eqref{eq:4.11} and~\eqref{eq:Rscale2}, we find that

\begin{equation}\label{eq:R9}
\frac{A(S)}{\lambda_{1,hA(\omega_j^{\text{out}})^{1/2} }(D_1)}
> \frac{\pi}{4}
-\frac{2}{ \sqrt{\lambda_{n,h}}}
+\frac{2}{\lambda_{n,h}}\, .
\end{equation}

Here, comparing with \eqref{i3a},  we need to have $\tilde h: = hA(\omega_j^{\text{out}})^{1/2}$ large enough if we want to arrive at the same conclusion as for  the Dirichlet case. So we have to find a lower bound for $A(\omega_j^{\text{out}})^{1/2}$. This seems difficult, at least with explicit lower bounds. We will use our initial $h$-independent upper bound from the previous section. Hence, we can assume in this Courant-sharp situation, that
 \begin{equation}\label{eq:R9a}
 n\leq 520\,.
  \end{equation}

Below, we do not try to  obtain
explicit constants. The first claim is that, according to \eqref{minlambda1}, there exist $c_1 >0$ and $h_1 >0$ such that
$$
\lambda_{1,\tilde h} \geq c_1 \tilde h \mbox{ if } 0 \leq  \tilde h\leq h_1\,.
$$
We now assume that $n \leq 520 $ and $\lambda_{n,h}$ is Courant-Sharp and get
$$
\lambda_{520,\infty} \geq \lambda_{n,h} \geq c_1 h A(\omega_j^{\text{out}} )^{-\frac 12} \, ,
$$
if $h A(\omega_j^{\text{out}})^\frac 12 \leq h_1$. This gives a contradiction if $ c_1 h^2 >  h_1 \lambda_{520,\infty}$. Hence, assuming $$h >h_1^\frac 12 c_1^{-\frac 12}  \lambda_{520,\infty}^{\frac 12}\,,$$ we can now assume that
$$
h A(\omega_j^{\text{out}})^\frac 12 > h_1\,.
$$
Now, we have
$$
\lambda_{520,\infty}  \geq \lambda_{n,h} \geq A(\omega_{j}^{\text{out}} )^{-1} \lambda_{1, h_1} (D_1)\,,
$$
which implies
$$
A(\omega_{j}^{\text{out}} )\geq \lambda_{1, h_1} (D_1)/ \lambda_{520,\infty} \,.
$$
This gives the existence of $c_2 >0$ such that
$ A(\omega_{j}^{\text{out}} )\geq c_2$  (see also Lemma~\ref{lem5.1}).\\
Coming back to \eqref{eq:R9}, we have
\begin{equation} \label{eq:R9bis}
\frac{\pi^2}{\lambda_{1,c_2^{1/2}h}(D_1)} > \frac{\pi}{4}
-\frac{2}{ \sqrt{\lambda_{n,h}}}
+\frac{2}{\lambda_{n,h}}\, .
\end{equation}
Hence for $h$ large enough, we also get in this case that $\lambda_{n,h} \leq 50$
 (compare with inequality \eqref{i3a}).

 We can now follow the proof of Pleijel for the Dirichlet case.\\
 The first step was to achieve (assuming $h$ large enough) the restriction to the three cases left by Pleijel.
  This step  now follows (using the continuity \eqref{eq:4.11} of the eigenvalues with respect to $h$ as $ h \rightarrow +\infty$ as already observed in the previous case).
  \\
The second step is to rule out the cases $\lambda_{7,h}(S)$ and, for $h$ sufficiently large, $\lambda_{9,h}(S)$.
Here the symmetry argument due to Leydold holds in the same way as for the Dirichlet case \cite{BH1}
for the two cases corresponding to the seventh and the ninth Robin eigenvalues.
We briefly recall the relevant particular case of the argument due to Leydold.

\begin{lem}\label{lem4.1} Let $0\leq h < +\infty$. Suppose that $\lambda_{n,h}(S)$ is a Robin eigenvalue with corresponding
eigenfunction defined in \eqref{eq:2.9}. Suppose that $n$ is odd and that the conditions of Remark \ref{remsym} are satisfied. Then $\lambda_{n,h}(S)$
is not Courant-sharp.
\end{lem}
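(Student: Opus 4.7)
The plan is to invoke the point-reflection symmetry already recorded in Remark~\ref{remsym}. Under the hypothesis, every pair $(i,j)$ with $\lambda_{n,h}(S)=\pi^{-2}(\alpha_i^2+\alpha_j^2)$ has $i+j$ odd, so each product $u_i(x)u_j(y)$ appearing in the general eigenfunction expansion \eqref{eq:2.9} changes sign under $(x,y)\mapsto(-x,-y)$. Reading off \eqref{eq:2.9b}, this yields that for \emph{every} eigenfunction $u$ associated with $\lambda_{n,h}(S)$, and not just those of product form,
\[
u(-x,-y) = -u(x,y),\qquad (x,y)\in S.
\]

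Next I would use this antisymmetry to prove that $\mu(u)$ is always even. The map $\sigma(x,y)=(-x,-y)$ is an isometric involution of $S$ that preserves the nodal set $\{u=0\}$ and therefore permutes the nodal domains of $u$. If $D$ is a nodal domain on which $u>0$, then $\sigma(D)$ is a connected component of $\{u\neq 0\}$ on which $u<0$, so in particular $\sigma(D)\neq D$. Hence $\sigma$ has no fixed nodal domain and partitions the set of nodal domains into orbits of size two, forcing $\mu(u)\in 2\Z$.

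To conclude, I observe that Courant-sharpness of $\lambda_{n,h}(S)$ would yield an eigenfunction $u$ with $\mu(u)=n$. Since $n$ is odd by assumption while $\mu(u)$ is even by the previous step, this is a contradiction, and $\lambda_{n,h}(S)$ cannot be Courant-sharp. The argument is essentially bookkeeping once the antisymmetry is in place; the only real point to be careful about is that the identity $u\circ\sigma=-u$ must hold for every eigenfunction in the eigenspace (including those arising from non-trivial linear combinations in a multiple eigenspace), but this is immediate from applying the parity hypothesis termwise in \eqref{eq:2.9b}.
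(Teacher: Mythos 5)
Your proposal is correct and follows essentially the same route as the paper: the parity hypothesis applied termwise in \eqref{eq:2.9b} gives the antisymmetry $u(-x,-y)=-u(x,y)$, which forces an even number of nodal domains, contradicting Courant-sharpness for odd $n$. The only difference is that you spell out the fixed-point-free orbit argument for why antisymmetry implies $\mu(u)$ is even, a step the paper leaves implicit by simply citing Remark~\ref{remsym}.
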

We know indeed by the standard Courant nodal  domain theorem that the number of nodal domains is not larger than $n$ and by Remark \ref{remsym} that it is even. Hence the number is less than $n$.\\

As an application, we observe that any eigenfunction corresponding to the seventh  Robin eigenvalue is a linear combination of $u_{2,1}(x,y)$ and $u_{1,2}(x,y)$ (see Figure~\ref{fig:18evals} and Appendix~\ref{sA}) and that $1+2$ is odd. So $\lambda_{7,h}(S)$ is not Courant-sharp for any $h \geq 0$.

 Similarly, for $h$ large, any eigenfunction corresponding to the ninth  Robin
 eigenvalue is a linear combination of $u_{3,0}(x,y)$ and $u_{0,3}(x,y)$
  (see Figure~\ref{fig:18evals} and Appendix~\ref{sA}) and $0+3$ is odd.\\

 Hence at this stage, we have proved the following  proposition.
 \begin{prop}\label{prop:hlarge}
 There exists $h_1>0$ such that for $h\geq h_1$, the Courant-sharp cases for the Robin problem are the same, except possibly for $k=5$,
 as those for $h=+\infty\,$.
 \end{prop}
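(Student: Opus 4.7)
The plan is to assemble the ingredients developed in Sections~\ref{s3} and \ref{S:hlarge} into a clean argument, following the Pleijel--Dirichlet template but with the Robin Faber--Krahn inequality (Bossel--Daners) replacing the classical one on boundary-touching nodal domains. First, I would fix any Courant-sharp Robin eigenvalue $\lambda_{n,h}(S)$; by Theorem~\ref{prop:p1} (which is $h$-independent) we may assume $n \leq 520$. Writing $u$ for an associated eigenfunction and splitting its nodal domains into an interior family $\{\omega_i^{\text{inn}}\}$ and a boundary family $\{\omega_j^{\text{out}}\}$, Lemma~\ref{lemmuout} gives $\mu^{\text{out}}(u) \leq 4\sqrt{\lambda_{n,h}}$, so combined with the Weyl-type lower bound \eqref{eq:R2} on the Robin counting function we are pushed into exactly one of the two cases already used in the Dirichlet argument: either there is an inner domain of area $\le A(S)/n$, or there is a boundary domain of area $\le A(S)/n$.

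In the inner case, the usual Faber--Krahn inequality gives \eqref{i2} and therefore $\lambda_{n,h} \leq 50$ as in \cite{BH1}. In the boundary case I would use monotonicity \eqref{eq:4.9} to drop the Dirichlet arcs, then apply the Bossel--Daners inequality \eqref{eq:FKR} together with the scaling \eqref{eq:Rscale2}, yielding \eqref{eq:R9}. The main obstacle, and the step that really needs the preparatory work of Subsection~\ref{ssFK}, is that the rescaled Robin parameter $\tilde h := h A(\omega_j^{\text{out}})^{1/2}$ need not be large: to use \eqref{maxlambda1} and recover \eqref{i3a} one must first ensure $\tilde h \gtrsim 1$. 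Here one uses the already-available bound $n \leq 520$: if $\tilde h$ were too small then the asymptotic $\lambda_{1,\tilde h}(D_1) \sim d\tilde h$ from \eqref{minlambda1} combined with $\lambda_{n,h} \leq \lambda_{520,\infty}$ would force $h$ itself to be bounded, contradicting the assumption that $h$ is large. Once this dichotomy is ruled out, \eqref{eq:R9} together with \eqref{maxlambda1} delivers $\lambda_{n,h} \leq 50$ again.

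With $\lambda_{n,h} \leq 50$ in hand, the argument finishes by reduction to the Dirichlet list. The continuity statement \eqref{eq:4.10}, $\lambda_{n,h} \to \lambda_n^D$ as $h \to \infty$, together with a direct inspection of $n/\lambda_n^D$ as in \cite{BH1}, restricts the remaining candidates to $n \in \{1,2,4,5,7,9\}$ for $h$ sufficiently large. The cases $n=1,2,4$ are Courant-sharp by Subsection~\ref{SS:2.3}, and for $n=7$ the eigenspace is spanned by $u_{2,1}$ and $u_{1,2}$ with $2+1$ odd, so Lemma~\ref{lem4.1} (Leydold's symmetry argument) rules it out for every $h \geq 0$. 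For $n=9$, continuity as $h \to \infty$ together with the explicit asymptotic eigenspace spanned by $u_{3,0}$ and $u_{0,3}$ (both with $i+j$ odd) applies Lemma~\ref{lem4.1} once $h$ is large enough to guarantee that this is the correct eigenspace. Taking $h_1$ to be the maximum of the finitely many thresholds produced along the way then gives the proposition, with $n=5$ remaining as the only case not covered by these symmetry and counting arguments.
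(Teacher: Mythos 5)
Your proposal reproduces the paper's argument essentially verbatim: the same dichotomy between a small interior nodal domain and a small boundary nodal domain, the same use of the $h$-independent bound $n\leq 520$ from Theorem~\ref{prop:p1} to exclude a small rescaled parameter $\tilde h = hA(\omega_j^{\text{out}})^{1/2}$ via \eqref{minlambda1}, and the same reduction to $n\in\{1,2,4,5,7,9\}$ by continuity followed by Leydold's symmetry argument (Lemma~\ref{lem4.1}) for $n=7$ and, for $h$ large, $n=9$. The only step you elide is the intermediate area bound $A(\omega_j^{\text{out}})\geq c_2 := \lambda_{1,h_1}(D_1)/\lambda_{520,\infty}>0$, which is what upgrades the conclusion $\tilde h>h_1$ to $\tilde h\geq c_2^{1/2}h\to\infty$ so that \eqref{maxlambda1} actually applies in \eqref{eq:R9}; this follows in one line from your own setup and is exactly how the paper closes the boundary case.
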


 So, having in mind what was done for the Dirichlet case \cite{BH1}, in order to prove Theorem \ref{thm:hlarge}
 for $h$ large enough it remains to count the number of nodal domains of any eigenfunction corresponding
 to the fifth eigenvalue.  This will be analysed in Section~\ref{s:5CS} as a direct consequence of Section \ref{sp5}.

  \section{A general perturbation argument.}\label{sp5}
  \subsection{Preliminary discussion.}

  We analyse a $\theta$-dependent family
  $\Phi_{h,\theta}$ of eigenfunctions, more explicitly
  $$
    \Phi_{h,\theta,p,q}(x,y)   =\cos \theta \, u_{p,h} (x) u_{q,h} (y) + \sin \theta\,  u_{p,h} (y) u_{q,h} (x) \, ,$$
  for $(x,y) \in (-\frac{\pi}{2},\frac{\pi}{2})^2$.\\
 For most of the arguments  in this section, we will not use the explicit expression of the eigenfunction, but only the property that $\Phi_{h,\theta}$ is a very smooth family of eigenfunctions (with respect to $h$ and $\theta$)
  where,  for $h \in (0,+\infty]$, $\Phi_{h,\theta}$ is an eigenfunction of the $h$-Robin  Laplacian
  associated with a smooth eigenvalue $\lambda(h)$. The parameter $\theta$, which above belongs to $ \mathbb R /(2\pi \mathbb Z)$, could also be thought of as belonging to some open neighbourhood of some point $\mathbb \theta_0$  in $\R$.\\ 
 In addition, most of the arguments extend to more general domains.
 We consider the case of bounded, planar domains with piecewise $C^{2,\alpha}$ ($\alpha>0$) boundary.

 For  $h=+\infty$ (or $h=h_0 >0$) and $\theta=\theta_0$, we assume that the number of nodal domains is known (for example,
 that the corresponding eigenvalue is not Courant-sharp). The aim of this section is to prove that by perturbation (i.e. for $|\frac 1 h -\frac{1}{h_0}| + |\theta-\theta_0|$ small enough) the number of nodal domains cannot increase (see Proposition \ref{propPerturb}). \\
 The proof involves various general statements which are interesting in a more general context\footnote{We thank T. Hoffmann-Ostenhof for the useful suggestion to establish and use Lemma~\ref{lem5.1}. We also thank D. Bucur for his enlightening  explanation of the results of \cite{BG1} and \cite{BG2}.}, hence not restricted to the case of the square.

\subsection{Robin Faber-Krahn inequality revisited.}

 \begin{prop}\label{prop5.1}
 Given $h_1>0$ and $M>0$, we consider a smooth family $\Phi_{h,\theta}$ of $h$-Robin eigenfunctions  on
 $\Omega$, where $\Omega$ is a connected, bounded set with piecewise $C^{2,+}$ boundary\footnote{This means $C^{2,\alpha}$ for some $\alpha >0$.} , $\lambda(h) \leq M $ and $h\in I \subset [h_1,+\infty)$ ($I$ being a finite or infinite interval).  Any nodal domain of $\Phi_h$ satisfies the $h$-Faber-Krahn inequality.
        \end{prop}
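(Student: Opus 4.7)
The plan is to verify the Bossel--Daners Faber--Krahn inequality \eqref{eq:FKR} one nodal domain at a time, after splitting the nodal domains of $\Phi_{h,\theta}$ into two classes: inner nodal domains $\omega^{\text{inn}}$, whose closure meets $\partial\Omega$ in at most finitely many points, and boundary nodal domains $\omega^{\text{out}}$, which share at least one arc with $\partial\Omega$. On either class the restriction of $\Phi_{h,\theta}$ is an eigenfunction of a mixed Robin/Dirichlet problem on $\omega$ with eigenvalue $\lambda(h)$, and the target inequality will then follow from \eqref{eq:FKR} once $\omega$ is known to be regular enough for the Bossel--Daners argument to apply.

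For an inner nodal domain $\omega^{\text{inn}}$, I would use that the restriction of $\Phi_{h,\theta}$ vanishes on $\partial\omega^{\text{inn}}$, so
$$
\lambda(h) = \lambda_1^D(\omega^{\text{inn}}) \geq \lambda_{1,h}(\omega^{\text{inn}})
$$
by monotonicity of the eigenvalues with respect to the Robin parameter. Inside $\Omega$ the eigenfunction is real analytic by interior regularity for elliptic equations with constant coefficients, so $\partial\omega^{\text{inn}}$ is a finite union of analytic arcs meeting transversally away from a finite set of critical points; thus $\omega^{\text{inn}}$ is Lipschitz (indeed piecewise analytic) and the classical Faber--Krahn inequality, hence also \eqref{eq:FKR}, applies directly.

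For a boundary nodal domain $\omega^{\text{out}}$, the restriction of $\Phi_{h,\theta}$ satisfies the $h$-Robin condition on $\partial\omega^{\text{out}}\cap\partial\Omega$ and vanishes on $\partial\omega^{\text{out}}\cap\Omega$, so as in \eqref{eq:4.9}, $\lambda(h) \geq \lambda_{1,h}(\omega^{\text{out}})$, where the right-hand side now refers to the pure $h$-Robin problem; combined with \eqref{eq:FKR} this would conclude. To justify applying \eqref{eq:FKR}, I would invoke Schauder estimates for the Robin problem on each smooth arc of $\partial\Omega$ to obtain $C^{2,\alpha}$-regularity of $\Phi_{h,\theta}$ up to the boundary away from the corner points, combined with the same interior analyticity as above; this would show that $\partial\omega^{\text{out}}$ is a finite union of $C^{2,\alpha}$ arcs meeting at finitely many points, whence $\omega^{\text{out}}$ is Lipschitz.

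The hard part will be controlling the geometry at the exceptional points where a nodal line of $\Phi_{h,\theta}$ meets $\partial\Omega$, at the corners of $\partial\Omega$, or where a singular point of the nodal set lies on $\partial\Omega$; there one must rule out tangential contact and complicated multiple branching in order to obtain a Lipschitz corner. In configurations where direct verification seems delicate, the plan is to fall back on the generalisation of the Bossel--Daners inequality due to Bucur--Giacomini (Theorem~4.1 of \cite{BG2}, together with \cite{BG1}), which is valid on a class of admissible sets broad enough to accommodate all boundary nodal domains arising here, or equivalently to use the strategy of Section~3 of \cite{K09}, which yields \eqref{eq:FKR} under comparable minimal regularity. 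Either way one obtains $\lambda_{1,h}(\omega^{\text{out}}) \geq \lambda_{1,h}(D_{\omega^{\text{out}}})$, completing the proof of the proposition.
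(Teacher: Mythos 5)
Your proposal is correct and follows essentially the same route as the paper: reduce to the pure Robin problem on each nodal domain by monotonicity in the Robin parameter, apply Bossel--Daners \eqref{eq:FKR} where the nodal domain is known to be Lipschitz, and fall back on the Bucur--Giacomini result \cite{BG1} (valid for an arbitrary open set of finite area, with a relaxed definition of the first Robin eigenvalue that is dominated by the classical one) wherever the boundary regularity is delicate. The only structural difference is one of emphasis: the paper establishes the nodal structure up to the smooth part of $\partial\Omega$ via its Appendix~\ref{appB} theorem and reserves the \cite{BG1} fallback for the corners, whereas you invoke that fallback more broadly for all problematic boundary configurations --- which is harmless, since \cite{BG1} covers every open set of finite area.
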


  \begin{rem}
  We note that the square satisfies the assumptions of Proposition~\ref{prop5.1} but  in this case there is a more direct proof.  As in Subsection~\ref{ssFK}, we indeed observe that $\Phi_{h,\theta}$ admits an extension $\widetilde  \Phi_{h,\theta}$ to $\mathbb R^2$ such that $-\Delta \widetilde  \Phi_{h,\theta} =\lambda(h) \widetilde  \Phi_{h,\theta}$. This gives more information about the local nodal structure of $\Phi_{h,\theta}$ up to
  the boundary (actually in a neighbourhood of $ \overline{S}$ 
  ).
  \end{rem}

  \begin{proof}
 The proposition holds for an open set with  $C^{2,+}$ boundary (hence without corners) as a direct application of Theorem~\ref{thm:nodinfo} in Appendix~\ref{appB}.
 Hence $\Omega$ is a domain with rectifiable boundary of finite length and thus the Faber-Krahn inequality
holds by \cite{BG2} (as mentioned in Subsection~\ref{ssFK}). The same is true for the nodal domains whose
boundaries do not touch a corner.\\

It  remains to treat the corners. The Dirichlet case was addressed by Helffer, Hoffmann-Ostenhof and Terracini in \cite{HHOT}. This  argument involves a local conformal change of coordinates which leads to the analysis of an operator with higher singularities.

We do not know an appropriate reference for the Robin case.
The guess is that the boundary of a nodal domain (whose closure touches the corner) consists of Lipschitz arcs of finite length, including the arcs for which one end touches the corner, which would allow us to use the Robin Faber-Krahn inequality for Lipschitz domains.
Instead we use that according to \cite{BG1}, the $h$-Faber-Krahn inequality holds for any open set with finite area. In this general case, the first eigenvalue is defined as in Definition 4.2 of \cite{BG1}. It is also proven in \cite{BG1} that with this choice  of definition, this eigenvalue is not larger than any other definition given in a more regular situation.
\end{proof}

     \begin{lem}\label{lem5.1}
   Let $h_0 >0$ and $M >0$. Then, under the same hypotheses as in Proposition~\ref{prop5.1},  there exists $ \epsilon_0 >0$ such that no nodal domain of an eigenfunction $\Phi_h$ associated with $\lambda(h)$ for the Robin problem with parameter  $h\geq h_0$ in some open set $\Omega$ and $\lambda(h) \leq M$ can have area less than $\epsilon_0$. (This includes the Dirichlet case).
   \end{lem}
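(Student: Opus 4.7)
My plan is to combine the Robin Faber-Krahn inequality from Proposition~\ref{prop5.1} with the scaling relation \eqref{eq:Rscale2} and the two asymptotic regimes \eqref{maxlambda1}, \eqref{minlambda1} for $\lambda_{1,\tilde h}(D_1)$, exactly as was done informally at the end of Subsection~\ref{ssFK} but now without an \emph{a priori} integer index bound.

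\textbf{Step 1 (setup).} Let $\omega$ be a nodal domain of $\Phi_h$ with $h\geq h_0$ and $\lambda(h)\leq M$. Since $\Phi_h\big\vert_\omega$ vanishes on the part of $\partial\omega$ lying in $\Omega$ and satisfies the Robin condition on the rest, the monotonicity with respect to the boundary condition (replacing the Dirichlet part by Robin) gives $\lambda(h)\geq \lambda_{1,h}(\omega)$. Applying Proposition~\ref{prop5.1} and the Bossel--Daners inequality \eqref{eq:FKR} then yields
\begin{equation*}
M\;\geq\;\lambda(h)\;\geq\;\lambda_{1,h}(\omega)\;\geq\;\lambda_{1,h}(D_\omega)\;=\;\frac{\lambda_{1,\,hA(\omega)^{1/2}}(D_1)}{A(\omega)},
\end{equation*}
where the last equality is \eqref{eq:Rscale2}. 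Setting $\tilde h := hA(\omega)^{1/2}$, this rewrites as
\begin{equation*}
M\,A(\omega)\;\geq\;\lambda_{1,\tilde h}(D_1).
\end{equation*}

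\textbf{Step 2 (case split on $\tilde h$).} Fix once and for all some convenient threshold $T>0$ (any positive constant works, e.g.\ $T=1$). If $\tilde h\geq T$, then by monotonicity of $\tilde h\mapsto \lambda_{1,\tilde h}(D_1)$ (which is an increasing function bounded above by $\pi\mathbf{j}^2$ in view of \eqref{maxlambda1}) we have $\lambda_{1,\tilde h}(D_1)\geq \lambda_{1,T}(D_1)>0$, so
\begin{equation*}
A(\omega)\;\geq\;\frac{\lambda_{1,T}(D_1)}{M}=:\epsilon_0^{(1)}.
\end{equation*}
If instead $\tilde h<T$, then by \eqref{minlambda1} there exists a constant $c_1>0$ (depending only on $T$) such that $\lambda_{1,\tilde h}(D_1)\geq c_1\tilde h$ for all $\tilde h\in(0,T]$. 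Substituting $\tilde h=hA(\omega)^{1/2}$, the inequality from Step~1 becomes
\begin{equation*}
M\,A(\omega)\;\geq\;c_1\,h\,A(\omega)^{1/2}\,,
\end{equation*}
and since $h\geq h_0$ this gives $A(\omega)^{1/2}\geq c_1 h_0/M$, hence
\begin{equation*}
A(\omega)\;\geq\;\Bigl(\frac{c_1 h_0}{M}\Bigr)^{2}=:\epsilon_0^{(2)}.
\end{equation*}
Taking $\epsilon_0:=\min(\epsilon_0^{(1)},\epsilon_0^{(2)})>0$ completes the Robin case.

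\textbf{Step 3 (Dirichlet case).} For $h=+\infty$, Proposition~\ref{prop5.1} together with the classical Faber--Krahn inequality directly yields $M\geq \lambda_1^{D}(D_\omega)=\pi\mathbf{j}^2/A(\omega)$, so $A(\omega)\geq \pi\mathbf{j}^2/M$. This is consistent with the Robin bound as $\tilde h\to+\infty$ in view of \eqref{maxlambda1}, and it is readily absorbed into the definition of $\epsilon_0$.

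There is no real obstacle; the delicate point, already identified in Subsection~\ref{ssFK}, is that the scaling argument distorts the Robin parameter, so one cannot directly invoke a uniform lower bound $\lambda_{1,\tilde h}(D_1)\geq \pi\mathbf{j}^2$ (which fails when $\tilde h$ is small). The case split in Step~2, using the linear lower bound \eqref{minlambda1} in the small-$\tilde h$ regime, is what makes the argument work and why the hypothesis $h\geq h_0>0$ is used in an essential way.
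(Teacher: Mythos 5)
Your proof is correct and follows essentially the same route as the paper: the $h$-Faber--Krahn inequality of Proposition~\ref{prop5.1}, the scaling identity \eqref{eq:Rscale2}, and the small-parameter asymptotics \eqref{minlambda1}. The only (cosmetic) difference is that the paper first uses monotonicity in $h$ to replace $h$ by the fixed value $h_0$, so that the rescaled parameter $h_0A(\omega)^{1/2}$ automatically lies in the small regime as $A(\omega)\to 0$, which avoids your case split on $\tilde h$.
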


   \begin{proof}
   This follows  directly from the $h$-Faber Krahn inequality.  If $\omega$ is a nodal domain of $\Phi_h$ satisfying the assumptions of the lemma, we have
 \begin{equation}
 M\geq   \lambda(h)\geq \lambda(h_0) \geq \lambda_{ 1, h_0} (D_\omega) = \lambda_{ 1, h_0A(\omega)^\frac 12} (D_1)/A(\omega) \sim d \, h_0/A(\omega)^{\frac 12}\,.
  \end{equation}
 This shows that as soon as we avoid the Neumann situation, the ground state energy in a domain $\omega$ tends to $+\infty$ as the area of the domain tends to $0$.
   \end{proof}
   \subsection{On the nodal set at the boundary.}

          \begin{prop}\label{eq:ass}
 Under the assumptions of Proposition \ref{prop5.1}, there exists $ C>0$ such that, for any $ h\in I$ and any $\theta$, the number of zeros of $\Phi_{h,\theta}$ at the boundary is less than $C$.
        \end{prop}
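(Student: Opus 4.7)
The plan is to reduce to a count of zeros on each smooth piece of $\partial\Omega$ and then combine this with a uniformity argument in $(h,\theta)$. First, by Aronszajn--Bers unique continuation, $\Phi_{h,\theta}$ cannot vanish identically on any sub-arc of $\partial\Omega$: the Robin condition would force the Cauchy data on such an arc to be $(\Phi_{h,\theta},\partial_\nu \Phi_{h,\theta})=(0,-h\cdot 0)=(0,0)$, forcing $\Phi_{h,\theta}\equiv 0$ in $\Omega$. Using the $C^{2,+}$ regularity of $\partial\Omega$ and elliptic regularity up to the boundary, Bers' local structure theorem gives that each zero of the trace $\phi_{h,\theta}:=\Phi_{h,\theta}|_\gamma$ on a smooth arc $\gamma$ is isolated and of finite order, so for each fixed $(h,\theta)$, $\phi_{h,\theta}$ has only finitely many zeros on the compact boundary.

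For the main application --- the square with $\Phi_{h,\theta,p,q}$ --- there is a direct bound $C=4\sqrt M$ in the spirit of Lemma~\ref{lemmuout}. Indeed, the restriction to the side $\{x=\pi/2\}$ reads
\[
\Phi_{h,\theta,p,q}(\pi/2,y)=\bigl(\cos\theta\,u_{p,h}(\pi/2)\bigr)u_{q,h}(y)+\bigl(\sin\theta\,u_{q,h}(\pi/2)\bigr)u_{p,h}(y),
\]
and a direct computation from \eqref{form1}--\eqref{form2} together with \eqref{eq:alphaneven}--\eqref{eq:alphanodd} shows $u_{r,h}(\pm\pi/2)=\pm\alpha_r(h)/(h\pi)\neq 0$ for every $0<h<+\infty$ and every $r\in\N$, so the two coefficients cannot be simultaneously zero. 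Sturm's theorem (Theorem~\ref{T-st2r}) then bounds the number of zeros on that side by $\max(p,q)$, and $\lambda(h)\leq M$ combined with $\alpha_r(h)\geq r\pi$ forces $\max(p,q)\leq\sqrt M$. Summing over the four sides yields the bound.

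For the general piecewise $C^{2,+}$ setting stated in the hypothesis, the delicate point is uniformity in $(h,\theta)$. When $I$ is bounded away from $+\infty$, compactness of $\overline I\times \R/2\pi\Z$ combined with upper semicontinuity of the local zero count --- obtained from the smooth dependence of $\Phi_{h,\theta}$ on the parameters in, say, a $C^1(\overline\Omega)$ topology, and the fact that each zero of $\phi_{h,\theta}$ is of finite order --- does the job. The genuine obstacle is the limit $h\to+\infty$, where $\phi_{h,\theta}\to 0$ uniformly on $\partial\Omega$ and a naive continuity argument collapses. I would handle it by renormalising: the Robin condition gives $h\,\phi_{h,\theta}=-\partial_\nu\Phi_{h,\theta}|_{\partial\Omega}$, and as $h\to+\infty$ the right-hand side converges (in $C^0$ on each smooth arc, by elliptic regularity and the continuous dependence~\eqref{eq:4.10}) to $-\partial_\nu\Phi_{\infty,\theta}|_{\partial\Omega}$, the Dirichlet normal derivative, which is non-trivial on each smooth arc by Hopf's lemma and unique continuation. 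Applying the same zero-count argument to this renormalised boundary trace family produces a uniform bound in a neighbourhood of $h=+\infty$, and combining this with the compactness argument on any bounded subinterval yields the required constant $C$.
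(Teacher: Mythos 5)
Your treatment of the square itself is fine and coincides with the paper's own remark that, in that case, the proposition follows from Sturm's theorem: the verification $u_{r,h}(\pm\pi/2)=\pm\alpha_r(h)/(h\pi)\neq 0$ and the resulting bound $4\sqrt{M}$ are essentially Lemma~\ref{lemmuout}. For the general piecewise $C^{2,+}$ case, however, your route is genuinely different from the paper's, and it has two real gaps. The paper uses no compactness or continuity in $(h,\theta)$ at all: it applies the Euler formula with boundary (Proposition~\ref{chapBH.Euler}) to get $\sum_i \rho(\mathbf{y}_i)\leq 2k-2$, where $k$ is the number of nodal domains, and then bounds $k$ uniformly via Courant's theorem and the fact that the labelling of any eigenvalue $\lambda(h)\leq M$ with $h\geq h_1$ is uniformly bounded by monotonicity. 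This produces the constant $C$ simultaneously for all $(h,\theta)$, so no semicontinuity-of-zero-counts argument is ever needed.

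The first gap in your version is the corners. The hypothesis allows a piecewise $C^{2,+}$ boundary (the square has four corners), and your local-structure-plus-compactness argument only controls zeros on the open smooth arcs; nothing in it excludes a sequence of boundary zeros accumulating at a corner, where the Bers expansion is not available. The paper handles exactly this point with Lemma~\ref{lem5.1}: zeros accumulating at a corner $x_c$ would force nodal domains of arbitrarily small area inside $D(x_c,\epsilon)$, contradicting the uniform area lower bound, after which the Euler formula is applied in $\Omega\setminus D(x_c)$. The second gap is that upper semicontinuity of the zero count does not follow from $C^1(\overline\Omega)$-closeness when the boundary trace has a zero of order $\ell\geq 2$: you need $C^\ell$-closeness (so that the $\ell$-th tangential derivative stays nonvanishing and Rolle gives at most $\ell$ nearby zeros), hence an a priori uniform bound on the vanishing order over the whole parameter family, which you have not supplied. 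The renormalisation $h\,\phi_{h,\theta}=-\partial_\nu\Phi_{h,\theta}|_{\partial\Omega}$ near $h=+\infty$ is a good idea, and the identification of its zeros with those of the trace is correct, but the convergence you invoke, \eqref{eq:4.10}, concerns eigenvalues only; convergence of the normal derivatives in a sufficiently strong boundary norm would still have to be proved. All of these difficulties are bypassed by the paper's topological argument, which you should compare with yours.
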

        \begin{rem}
      In the case of the square the proposition follows
  from Sturm's theorem.\end{rem}

\begin{proof}
We will use the Euler formula with boundary. The conditions for its application are satisfied by using Theorem~\ref{thm:nodinfo} and it reads as follows (see, for example, \cite{HOMN}).

\begin{prop}\label{chapBH.Euler}
Let $\Omega$ be an open set in $\R^2$ with $C^{2,+}$ boundary, $u$ a Robin eigenfunction with $k$ nodal domains, $N(u)$ its zero-set.
 Let $b_0$ be the number of components of $\partial \Omega$ and $b_1$ be the number of components of $N(u) \cup\partial \Omega$. Denote by $\nu({\bf x}_i)$ and $\rho({\bf y}_i)$ the numbers of curves ending at  critical point ${\bf x}_i\in N(u)$, respectively ${\bf y}_i \in N(u)\cap \partial \Omega$. Then
\begin{equation}\label{chapBH.Emu}
k= 1 + b_1-b_0+\sum_{{\bf x}_i}\Big(\frac{\nu({\bf x}_i)}{2}-1\Big)
+\frac{1}{2}\sum_{{\bf y}_i}\rho({\bf y}_i)\,.
\end{equation}
\end{prop}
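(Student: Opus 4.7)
The plan is to apply planar Alexander duality together with the graph Euler identity to $G := N(u) \cup \partial\Omega$, viewed as a finite embedded graph in $\mathbb{R}^2$.

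First I would set up the graph and compute $E - V$. By Theorem~\ref{thm:nodinfo}, $N(u)$ is a finite union of smooth arcs meeting only at the interior critical points ${\bf x}_i$ and at the boundary points ${\bf y}_j$, so $G$ is a finite embedded graph. I take as $0$-cells the ${\bf x}_i$, the ${\bf y}_j$, together with one auxiliary vertex on each component of $\partial\Omega$ disjoint from $N(u)$; the $1$-cells are the smooth nodal arcs and the boundary arcs of $\partial\Omega$ between consecutive $0$-cells. The degree of $G$ at ${\bf x}_i$ is $\nu({\bf x}_i)$, at ${\bf y}_j$ it is $\rho({\bf y}_j)+2$ (the $+2$ from the two incident boundary arcs), and at each auxiliary vertex it is $2$. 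A handshake-lemma count, in which each auxiliary vertex contributes $+1$ to both $V$ and $E$ and hence cancels, gives
\begin{equation*}
E - V \;=\; \sum_{{\bf x}_i}\Big(\tfrac{\nu({\bf x}_i)}{2}-1\Big) + \tfrac{1}{2}\sum_{{\bf y}_j}\rho({\bf y}_j).
\end{equation*}

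Next I would identify the first Betti number $\beta_1(G)$ via the planar complement. Planar Alexander duality asserts that for any compact set $K \subset \mathbb{R}^2$ the number of connected components of $\mathbb{R}^2 \setminus K$ equals $\beta_1(K)+1$. Applied to $K = G$: since $\partial\Omega \subset G$, the complement $\mathbb{R}^2 \setminus G$ decomposes as the $b_0$ components of $\mathbb{R}^2 \setminus \overline{\Omega}$ (one unbounded outer component plus the $b_0-1$ bounded ``holes'' carved out by the inner boundary components of $\partial\Omega$) together with the $k$ nodal domains $\omega_1,\ldots,\omega_k$ of $u$ inside $\Omega$. Hence $\beta_1(G) = k + b_0 - 1$.

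Finally I would combine via the Euler identity for the graph $G$, which has $b_1$ connected components by definition: $V - E = \chi(G) = b_1 - \beta_1(G) = b_1 - (k+b_0-1)$. Solving for $k$ and substituting the edge-count from the first step gives
\begin{equation*}
k \;=\; 1 + b_1 - b_0 + (E-V) \;=\; 1 + b_1 - b_0 + \sum_{{\bf x}_i}\Big(\tfrac{\nu({\bf x}_i)}{2}-1\Big) + \tfrac{1}{2}\sum_{{\bf y}_j}\rho({\bf y}_j),
\end{equation*}
which is the claim. The main obstacle is justifying that the embedding and the complement decomposition are as tame as the algebraic count requires: we need the bounded components of $\mathbb{R}^2 \setminus \overline{\Omega}$ to correspond bijectively to the inner components of $\partial\Omega$, and the nodal arcs to meet $\partial\Omega$ only at finitely many ${\bf y}_j$'s without pathological accumulation. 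Both of these are secured by the $C^{2,+}$-regularity of $\partial\Omega$ together with the local description of $N(u)$ up to the boundary provided by Theorem~\ref{thm:nodinfo}, which are precisely the hypotheses already invoked just before the proposition statement.
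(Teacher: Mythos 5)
The paper does not actually prove this proposition: it states it as a known fact, quoting the Euler formula with boundary from the reference \cite{HOMN} and merely checking (via Theorem~\ref{thm:nodinfo}) that the hypotheses for its application hold. Your argument is therefore a self-contained proof of a statement the authors only cite, and it is essentially correct: the handshake identity $E-V=\sum_v(\deg v/2-1)$ applied to the graph $G=N(u)\cup\partial\Omega$, the identification of the complementary regions of $G$ in $\R^2$ as the $k$ nodal domains together with the $b_0$ components of $\R^2\setminus\overline\Omega$ (giving $\beta_1(G)=k+b_0-1$ by Alexander duality), and $\chi(G)=b_1-\beta_1(G)$ combine exactly as you say. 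This is in substance the same Euler-characteristic computation that underlies the cited result, just organised through Alexander duality rather than through a direct face count. Two small points you should make explicit. First, Theorem~\ref{thm:nodinfo}(\ref{item:nod}) allows $N(u)$ to contain $C^2$-immersed \emph{circles} in $\Omega$ passing through no critical point; these would be components of $G$ with no $0$-cells under your vertex choice, so you must place an auxiliary degree-$2$ vertex on each of them as well, not only on the boundary components disjoint from $N(u)$ --- harmless, since such vertices cancel in $E-V$ exactly as your others do, but needed for $G$ to be a genuine finite graph and for $\chi(G)=\beta_0-\beta_1$ to apply. Second, the complement decomposition uses that $\Omega$ is bounded and connected (so that $\R^2\setminus\overline\Omega$ has precisely $b_0$ components, one unbounded and $b_0-1$ holes); this is implicit in the proposition's intended setting but worth stating, since the hypothesis as written only says ``open set''.
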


In our application, we immediately obtain that the number $\rho(u)$ of boundary points (actually counted with multiplicity) in the nodal set of $u$ satisfies
$$
\rho(u) \leq 2k -2\,.
$$
To achieve the proof, we observe that by Courant's  nodal domain theorem, $k$ is less than the minimal labelling of $\lambda (h)$ and that this labelling is uniformly bounded if $\lambda (h)$  is uniformly bounded. By monotonicity, this labelling is indeed bounded by the maximal  labelling of an eigenvalue $\lambda_j (h_1)$ satisfying $\lambda_j(h_1) \leq M$.\\

It remains to treat what is going on in the neighbourhood of a corner $ x_c$.
We first show that there cannot exist an infinite sequence of zeros of $u$ in the boundary  (outside the corner)
tending to the corner $x_c$.
Indeed, by Proposition~\ref{prop5.1}, similarly to the proof of Lemma~\ref{lem5.1}, there exists
some sufficiently small $\epsilon >0$
such that any line starting from one of these zeros (which necessarily belongs to the boundary of one nodal domain)
should cross $ \partial D(x_c,\epsilon) \cap \Omega $  transversally and only once.
Hence the number of points is finite, and moreover not greater than the cardinality of
$N(u) \cap D(x_c,\epsilon) \cap \Omega $.
Observing that,  by Lemma~\ref{lem5.1}, the number of nodal domains of $u$ in $\Omega$ is the same as the number
of nodal domains of $u$ in $\Omega \setminus D(x_c) $,
we can apply the Euler Formula in $\Omega \setminus D(x_c) $ and get the same bound.
\end{proof}

 \subsection{On the variation of the cardinality of the nodal domains by perturbation.}
We assume that  $\Omega$ is a bounded, planar domain with piecewise $C^{2,+}$ boundary.
Our main result is  the following proposition.
\begin{prop}\label{propPerturb}
Under the previous assumptions on $\Omega$ and the family $\Phi_{h,\theta}$, let $\rho(h,\theta)$  denote
the cardinality of the nodal domains of  $\Phi_{h,\theta}$.
For any $\theta_0$, $h_0\in (0,+\infty]$, there exists $\eta_0>0$ such that if $|\frac 1h-\frac{ 1}{h_0}| + | \theta-\theta_0| <\eta_0 $, then
$$
\rho(h,\theta) \leq \rho(h_0,\theta_0)\,.
$$
\end{prop}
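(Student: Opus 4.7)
The plan is to proceed by contradiction. Suppose there is a sequence $(h_n, \theta_n) \to (h_0, \theta_0)$ (with $1/h_n \to 1/h_0$, allowing $h_0 = +\infty$) such that $\rho(h_n, \theta_n) > k_0 := \rho(h_0, \theta_0)$. Set $\Phi_0 := \Phi_{h_0,\theta_0}$, $\Phi_n := \Phi_{h_n,\theta_n}$, and let $\{\omega_j\}_{j=1}^{k_0}$ be the nodal domains of $\Phi_0$. Since the family is smooth in $(1/h,\theta)$, one has $\Phi_n \to \Phi_0$ in $C^0(\overline\Omega)$ with $\lambda(h_n)$ staying uniformly bounded; combined with Lemma~\ref{lem5.1}, this produces $\epsilon_0 > 0$ such that every nodal domain of $\Phi_0$ and (for $n$ large) of $\Phi_n$ has area at least $\epsilon_0$.

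Next, I would exploit that $N(\Phi_0)$ has $2$-dimensional Lebesgue measure zero (by Theorem~\ref{thm:nodinfo}), so the sublevel set $V_\eta := \{x \in \Omega : |\Phi_0(x)| < \eta\}$ satisfies $A(V_\eta) \to 0$ as $\eta \to 0^+$. I fix $\eta$ small enough that $A(V_\eta) < \epsilon_0/2$ and, in addition, each $\omega_j \setminus V_\eta$ is path-connected (the key topological input, discussed below). For such $\eta$ and $n$ large, $\|\Phi_n - \Phi_0\|_{\infty} < \eta/2$, so $\Phi_n$ and $\Phi_0$ share the same sign (and are nonvanishing) on $\Omega \setminus V_\eta$. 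Hence on the connected set $\omega_j \setminus V_\eta$, $\Phi_n$ has a constant nonzero sign, and this set is contained in a unique nodal domain $\tau(\omega_j)$ of $\Phi_n$.

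Conversely, for any nodal domain $\tilde\omega$ of $\Phi_n$ one has $A(\tilde\omega \setminus V_\eta) \geq A(\tilde\omega) - A(V_\eta) > \epsilon_0/2 > 0$. Picking $x \in \tilde\omega \setminus V_\eta$ and noting $x \in \omega_j \setminus V_\eta$ for some $j$, the connectedness of $\omega_j \setminus V_\eta$ forces $\tilde\omega \supseteq \omega_j \setminus V_\eta$, so $\tilde\omega = \tau(\omega_j)$. Thus $\tau : \{\omega_j\}_{j=1}^{k_0} \to \{\text{nodal domains of }\Phi_n\}$ is surjective, giving $\rho(h_n,\theta_n) \leq k_0$, the desired contradiction.

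The main obstacle is the path-connectedness of $\omega_j \setminus V_\eta$ for $\eta$ small. Away from the finitely many critical points of $\Phi_0$ in $\overline\Omega$, the implicit function theorem shows that $\{|\Phi_0| = \eta\} \cap \omega_j$ consists of smooth arcs pushing $\partial \omega_j \cap N(\Phi_0)$ inward by $O(\eta)$. Interior critical points of $\Phi_0$ inside $\omega_j$ lie in $\omega_j \setminus V_\eta$ for $\eta$ small, since $|\Phi_0|$ is bounded below at these finitely many points (they avoid the nodal set). At critical points of $N(\Phi_0)$ lying on $\partial \omega_j$, the local polynomial structure granted by Theorem~\ref{thm:nodinfo} shows that $\omega_j$ looks locally like a sector, and $\omega_j \setminus V_\eta$ there is just a smoothed sector, still connected and attached to the bulk; a similar local analysis handles the corners of $\partial \Omega$. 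Gluing these local pictures, $\omega_j \setminus V_\eta$ is a deformation retract of $\omega_j$ for $\eta$ small, hence path-connected, and the proposition follows.
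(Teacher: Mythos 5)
Your argument is correct in outline but takes a genuinely different route from the paper. The paper's proof is local: around each singular point of the zero set it fixes a small disc, tracks the $2\ell$ zeros of $\Phi_{h,\theta}$ on the bounding circle (which move smoothly because the tangential derivative does not vanish there), observes that the local nodal domains can only merge and never split under perturbation (Lemmas~\ref{lem5.3} and~\ref{lem5.3b}), and then uses Lemma~\ref{lem5.1} to guarantee that every global nodal domain crosses one of these circles, so that the global count cannot increase; the case $h_0=+\infty$ needs an extra lemma giving a fixed sign for $\Phi_{h,\theta}$ in a boundary collar away from $Z^{bnd}$. You replace all of this by one global surjection argument based on uniform convergence and $A(V_\eta)\to 0$, which has the advantage of treating $h_0$ finite and $h_0=+\infty$ uniformly and of making the monotonicity of the count completely transparent. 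The price is the connectedness of $\omega_j\setminus V_\eta$, which carries the whole weight of your proof and is only sketched; this is exactly where Theorem~\ref{thm:nodinfo} re-enters, so the two proofs ultimately rest on the same local structure input. Three points there deserve more care: (i) the relevant finite set is the set of singular points of $N(\Phi_0)$ (where $\Phi_0$ and $\nabla\Phi_0$ vanish simultaneously), not the critical points of $\Phi_0$, which need not be finite --- harmless, since interior critical points with nonzero critical value eventually leave $V_\eta$, but it should be stated; (ii) a single nodal domain may occupy several of the sectors at a singular point, so the local sector pictures do not glue into a retraction without first showing that $V_\eta\cap\omega_j$ is eventually contained in a controlled collar of $\{\Phi_0=0\}\cap\overline{\omega_j}$ (a compactness argument using $\inf_{\omega_j}|\Phi_0|=0$ only on the zero set); (iii) at a corner of $\partial\Omega$ lying in the zero set Theorem~\ref{thm:nodinfo} does not apply, and the paper itself only circumvents corners via Lemma~\ref{lem5.1}, so your one-line ``a similar local analysis handles the corners'' needs the same workaround. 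None of these is fatal, but the connectedness lemma should be proved in full rather than asserted.
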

We prove this proposition in the following subsections by analysing
what is going on at the interior critical points and at the boundary points of the zero set.

\subsubsection{Analysis in a neighbourhood of an interior point.}~\\
We treat what is going on at an interior point $z_0$.
   We assume that $z_0$ is a critical point of $\Phi_{h_0,\theta_0}$ associated with an eigenvalue $\lambda(h_0)$.
   We choose $ \epsilon_0 > 0$ small enough such that
   \begin{itemize}
   \item  $D(z_0,\epsilon_0) \subset \Omega$;
   \item Lemma~\ref{lem5.1} applies with $M > \lambda(h_0)$;
   \item  the circle $\mathcal C(z_0,\epsilon_0)$  crosses the $2\ell$ half-lines emanating from $z_0$ transversally at $2 \ell$ points $z_j(h_0,\theta_0)$ ($j=1,\dots,2\ell)$.
   \end{itemize}
   Here we have used the general results on the local structure of an eigenfunction of the Laplacian (see \cite{Be}
    and Appendix~\ref{appB}).

   \begin{lem}\label{lem5.3}
  With the previous notations and assumptions  of Lemma~\ref{lem5.1},  there exists $\eta_0>0$ such that if $|\frac 1h-\frac{ 1}{h_0}| + | \theta-\theta_0| <\eta_0 $, then the number of nodal domains of $\Phi_{h,\theta}$ intersecting
  the  disc $D(z_0,\epsilon_0)$ cannot increase.\\
  \end{lem}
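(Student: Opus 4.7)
The plan is to combine three ingredients already prepared in this section: persistence of the $2\ell$ transversal crossings on $\mathcal{C}(z_0,\epsilon_0)$ under small perturbation, the uniform lower bound on nodal areas from Lemma~\ref{lem5.1}, and a simple arc-counting argument on the boundary circle.

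First I would use the local structure recalled just before the lemma: $\Phi_{h_0,\theta_0}$ vanishes to some finite order $\ell$ at $z_0$, so exactly $2\ell$ nodal domains of $\Phi_{h_0,\theta_0}$ meet $D(z_0,\epsilon_0)$, and they cut $\mathcal{C}(z_0,\epsilon_0)$ into $2\ell$ open arcs separated by the transversal zeros $z_j(h_0,\theta_0)$, $j=1,\dots,2\ell$. Since the family $\Phi_{h,\theta}$ is smooth in $(1/h,\theta)$, its restriction to the compact circle $\mathcal{C}(z_0,\epsilon_0)$ is smooth in the parameters as well, and each $z_j(h_0,\theta_0)$ is a simple zero of $\Phi_{h_0,\theta_0}|_{\mathcal{C}(z_0,\epsilon_0)}$. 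The implicit function theorem then yields $\eta_1>0$ such that for $|\tfrac{1}{h}-\tfrac{1}{h_0}|+|\theta-\theta_0|<\eta_1$ the restriction $\Phi_{h,\theta}|_{\mathcal{C}(z_0,\epsilon_0)}$ still has exactly $2\ell$ simple zeros $z_j(h,\theta)$, depending continuously on the parameters, and no other zeros. The sign of $\Phi_{h,\theta}$ is therefore constant on each of the $2\ell$ complementary open arcs, and alternates between adjacent arcs.

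Next I would exclude nodal domains of $\Phi_{h,\theta}$ lying entirely inside $D(z_0,\epsilon_0)$. Apply Lemma~\ref{lem5.1} with some $M>\lambda(h_0)$ on a closed sub-interval of $I$ containing an open neighbourhood of $h_0$; this produces a uniform lower bound $\epsilon_*>0$ on the area of any nodal domain of $\Phi_{h,\theta}$ for $(h,\theta)$ sufficiently close to $(h_0,\theta_0)$. At the outset, I would tighten the choice of $\epsilon_0$ so that, in addition to the three conditions already listed, $\pi\epsilon_0^2<\epsilon_*$. Then no nodal domain of $\Phi_{h,\theta}$ can be contained in $\overline{D(z_0,\epsilon_0)}$, as its area would be strictly smaller than $\epsilon_*$.

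Combining the two steps, every nodal domain of $\Phi_{h,\theta}$ meeting $D(z_0,\epsilon_0)$ must cross $\mathcal{C}(z_0,\epsilon_0)$, and so must contain at least one of the $2\ell$ sign-constant arcs; since distinct nodal domains contain disjoint collections of arcs, their number is at most $2\ell$, which is exactly the number for $(h_0,\theta_0)$. Taking $\eta_0\leq\eta_1$ gives the statement. The main obstacle, in my view, is to secure the uniform Faber--Krahn step: one must verify that Lemma~\ref{lem5.1} genuinely applies uniformly in $(h,\theta)$ on a neighbourhood of $(h_0,\theta_0)$, which rests on Proposition~\ref{prop5.1} (including the delicate case $h_0=+\infty$, where one uses $1/h$ as the perturbation parameter and the fact that the Dirichlet eigenfunctions extend smoothly to the Robin ones). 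The persistence of the boundary zeros and the arc-counting step are essentially automatic once the uniform area lower bound is in place.
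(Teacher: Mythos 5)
Your proof establishes only the bound $\mu \leq 2\ell$ for the number of nodal domains of $\Phi_{h,\theta}$ meeting $D(z_0,\epsilon_0)$, and then asserts that $2\ell$ ``is exactly the number for $(h_0,\theta_0)$''. That last assertion is not true in general, and this is where the gap lies. The $2\ell$ sectors into which the nodal set of $\Phi_{h_0,\theta_0}$ divides $D(z_0,\epsilon_0)$ are only \emph{local} nodal domains; two of them (necessarily of the same sign) may belong to the same \emph{global} nodal domain, being joined by a path in $\Omega$ that leaves the disc. In that case the number of global nodal domains meeting the disc at $(h_0,\theta_0)$ is strictly less than $2\ell$, and your upper bound $2\ell$ for the perturbed count does not rule out an increase. (Think of a saddle point of the nodal set where the two positive quadrants are connected outside the disc: the unperturbed count is $3$, while your argument only gives $\leq 4$ after perturbation.)

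The paper closes exactly this gap: it introduces the local domains $\omega_j(h,\theta)$ attached to the $2\ell$ arcs and observes that if $\omega_{j}(h_0,\theta_0)$ and $\omega_{j'}(h_0,\theta_0)$ lie in the same global nodal domain --- i.e.\ are joined by a path $\gamma$ on which $\Phi_{h_0,\theta_0}$ has a fixed sign --- then, since $\gamma$ is compact and $|\Phi_{h_0,\theta_0}|$ is bounded below on it, the same path still joins $\omega_{j}(h,\theta)$ and $\omega_{j'}(h,\theta)$ for $(h,\theta)$ close to $(h_0,\theta_0)$. Hence the identification of arcs lying in the same global domain can only become coarser under perturbation, and since every nodal domain meeting the disc must cross the circle (your area argument, which is fine and matches the paper's use of Lemma~\ref{lem5.1}), the number of such domains cannot increase. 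Your remaining ingredients --- persistence of the $2\ell$ simple zeros on $\mathcal C(z_0,\epsilon_0)$ via transversality, and exclusion of domains contained in the disc via the uniform Faber--Krahn lower bound on areas --- coincide with the paper's; only this persistence-of-connections step is missing, and it is precisely the step that upgrades ``at most $2\ell$'' to ``cannot increase''.
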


  \begin{proof}
  If we look at the nodal structure inside $D(z_0,\epsilon_0)$, we have $2\ell $ local nodal domains.

  By local nodal domain of an eigenfunction $\Phi_{h,\theta}$, we mean the nodal domains of the restriction of $\Phi_{h,\theta}$ to $D(z_0,\epsilon_0)$. We note that any local  nodal  domain belongs to a global nodal domain but that two distinct local  nodal domains can be included in the same global  nodal domain.

  In this case, there exists a path $\gamma$ in $\Omega$ joining these two local domains on which $\Phi_{h,\theta}$ is positive  (or negative), which necessarily will not be included in $D(z_0,\theta_0)$.\\

Starting from $(h_0,\theta_0)$ we  now look at a small perturbation.  By considering the restriction of $\Phi_{h,\theta}$ to the circle $\partial D(z_0,\epsilon_0)$, we observe that the $2\ell$ zeros of $\Phi_{h,\theta}$ in $\partial D (z_0,\epsilon_0)$ move very smoothly, we denote them by $z_j(h,\theta)$.

 We indeed observe that the tangential derivative of $\Phi_{h_0,\theta_0}$ at each point $z_j(h_0,\theta_0)$ is not zero
 (again we use the general results for eigenfunctions, in particular the transversal property, see Appendix~\ref{appB}). By perturbation, this condition is still true if we choose $\eta_0$ small enough. Hence the restriction of $\Phi_{h,\theta}$ changes sign at each point $z_j(h,\theta)$. Moreover, there are $2\ell$ local domains $\omega_{j} (h,\theta)$  of $\Phi_{h,\theta}$ with the property that $\partial  \omega_{j} (h,\theta)$ intersects $\partial D (z_0,\epsilon_0)$ along the arc $(z_j (h,\theta), z_{j+1} (h,\theta))$ (with the convention that $j+1$ is $1$ for $j=2 \ell$).

  We now observe that if $\omega_{j} (h_0,\theta_0)$ and $\omega_{j'} (h_0,\theta_0)$
  belong to the same nodal domain ($j\neq j'$), the property remains true  for $(h,\theta)$ sufficiently close
   to $(h_0,\theta_0)$ (i.e. for $\eta_0$ in the lemma sufficiently small).\\
   If,  for $(\theta_0,h_0)$, $\omega_{j} (h_0,\theta_0)$ and $\omega_{j'} (h_0,\theta_0)$ do not belong to the same nodal domain, then there are two cases
   \begin{itemize}
 \item either the situation is unchanged by perturbation;
\item   or they belong after perturbation to the same nodal domain via a new path
 in $D(z_0,\epsilon_0)$.
\end{itemize}
In the second case, the number of nodal domains touching $\partial  D(z_0,\epsilon_0)$ is decreasing.\\

On the other hand, by Lemma \ref{lem5.1}, any nodal domain  that intersects $D(z_0,\epsilon_0)$  crosses
$\partial D(z_0,\epsilon_0)$. This achieves the proof.
\end{proof}

\begin{rem}\label{rem5.5}
If $\ell$=2, $\Phi_{h_0,\theta_0}$ is a Morse function whose Hessian has two non-zero eigenvalues of opposite sign. Then, for  $\epsilon_0$ small enough, $\Phi_{h,\theta}$ remains a Morse function for  $\eta_0$ small enough and admits a unique critical point $z_{h,\theta}$ in $D(z_0,\eta_0)$. Then there are four local nodal domains if $ \Phi_{h,\theta} (z_{h,\theta})=0$ and three  local nodal domains if $ \Phi_{h,\theta} (z_{h,\theta})\neq 0$ (see Subsection \ref{sssintp} for a detailed proof).
\end{rem}

\subsubsection{Analysis at the boundary.}~\\
It remains to control what is going on at the boundary.
We consider a point $z_0\in \partial \Omega$ such  that $z_0$ is a zero of $\Phi_{h_0,\theta_0}$
which in addition is assumed to be critical when $h_0=+\infty$. \\

We first assume that we avoid the corners and successively consider three cases:
\begin{itemize}
\item $h_0=+\infty$, perturbation only in $\theta$.
\item $0 < h_0 < +\infty$, general perturbation.
\item $h_0=+\infty$, general perturbation.
\end{itemize}
In the first case, the proof follows the same argument as that used in the proof of Lemma~\ref{lem5.3}
and uses the local structure of a Dirichlet eigenfunction at the boundary  (see \cite{Be} and Appendix~\ref{appB}).\\

For the second case, considering the proof of Lemma~\ref{lem5.3} once again,
we choose $\epsilon_0 >0$ sufficiently small such that $z_0$ is the only boundary point in the nodal set.
Then the proof goes in the same way.\\

In the third case, the situation is more delicate due to the complete vanishing of $\Phi_{+\infty,\theta_0}$ on the boundary, which should not be the case for $h < +\infty$.
To deal with this, we need the following lemma.

\begin{lem}
Let $\theta =\theta_0$ and $Z^{bnd}$  denote the intersection of the nodal set of $\Phi_{+\infty,\theta_0}$ with the boundary. Then for any $\epsilon >0$ there exists $h_\epsilon^*$ such that the set $\{d(z, \partial S) < \epsilon\} \cap  \{d(z, Z^{bnd})>\epsilon\}$ does not meet the zero set of $\Phi_{h,\theta}$
 for any $ h_\epsilon^* \leq h < +\infty$ and any $\theta$ such that $|\theta-\theta_0| < \frac{1}{h_\epsilon^*}$.
\end{lem}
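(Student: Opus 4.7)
The plan is to argue by contradiction, combining the convergence $\Phi_{h,\theta} \to \Phi_{+\infty,\theta_0}$ in $C^2(\bar S)$ as $h \to +\infty$ and $\theta \to \theta_0$ with a Taylor expansion at the boundary that exploits the Robin condition. Suppose the conclusion fails. Then one produces sequences $h_n \to +\infty$, $\theta_n$ with $|\theta_n - \theta_0| < 1/h_n$, and points $z_n \in \{d(\cdot,\partial S) < \epsilon\} \cap \{d(\cdot,Z^{bnd}) > \epsilon\}$ with $\Phi_{h_n,\theta_n}(z_n) = 0$; after extracting a subsequence, $z_n \to z_* \in \bar S$ with $d(z_*,\partial S) \leq \epsilon$ and $d(z_*,Z^{bnd}) \geq \epsilon$. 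The argument then splits according to whether $z_*$ lies on $\partial S$ or in the interior.

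In the boundary case $z_* \in \partial S$, the constraint $d(z_*,Z^{bnd}) \geq \epsilon > 0$ forces $z_* \notin Z^{bnd}$, so the Hopf lemma applied to the nodal domain of $\Phi_{+\infty,\theta_0}$ whose closure contains $z_*$ yields $c_0 := |\partial_\nu \Phi_{+\infty,\theta_0}(z_*)| > 0$. The $C^1$-convergence then gives $|\partial_\nu \Phi_{h_n,\theta_n}(p_n)| \geq c_0/2$ for $n$ large, where $p_n := \pi_{\partial S}(z_n) \to z_*$. Setting $d_n := d(z_n,\partial S) \to 0$ and Taylor-expanding $\Phi_{h_n,\theta_n}$ along the inward normal from $p_n$, together with the Robin condition $\partial_\nu \Phi_{h_n,\theta_n}(p_n) = -h_n \Phi_{h_n,\theta_n}(p_n)$, produces
\begin{equation*}
0 = \Phi_{h_n,\theta_n}(z_n) = -\,\partial_\nu \Phi_{h_n,\theta_n}(p_n)\left(h_n^{-1} + d_n\right) + O(d_n^2),
\end{equation*}
with the remainder uniform in $n$ by the uniform-in-$h$ $C^2$ bound. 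This forces $(c_0/2)(h_n^{-1} + d_n) \leq C d_n^2$; dividing by $d_n$ and letting $n \to \infty$ gives $c_0/2 \leq 0$, a contradiction.

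In the interior case $z_* \in S$, locally uniform convergence on compacts of $S$ gives $\Phi_{+\infty,\theta_0}(z_*) = 0$, so $z_*$ belongs to the interior nodal set $N$ of the Dirichlet eigenfunction. But $N$ consists of finitely many real-analytic arcs meeting $\partial S$ only at $Z^{bnd}$ and doing so transversally (by odd reflection across $\partial S$ and the structure of harmonic polynomials at a boundary zero), so for every $\epsilon' > 0$ the set $N \cap \{d(\cdot, Z^{bnd}) \geq \epsilon'\}$ is at distance $\eta(\epsilon') > 0$ from $\partial S$, with $\eta(\epsilon')/\epsilon' \to 1$ as $\epsilon' \to 0$. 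Applying the whole scheme to a sufficiently small auxiliary parameter $\epsilon' \leq \epsilon$ for which $\eta(\epsilon') > \epsilon'$ contradicts the existence of $z_* \in N$ satisfying both $d(z_*, \partial S) \leq \epsilon'$ and $d(z_*, Z^{bnd}) \geq \epsilon'$; for $\epsilon$ so large that the bad set is already empty there is nothing to prove, and intermediate $\epsilon$ reduce to this setting.

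The main technical obstacle is securing the uniform $C^2$ regularity of $\Phi_{h,\theta}$ up to $\partial S$ as $h \to +\infty$: the Robin boundary condition degenerates into Dirichlet only after multiplication by $h^{-1}$, and the usual elliptic estimates for the Robin Laplacian carry constants that depend on $h$. The cleanest resolution is to write $\Phi_{h,\theta} = \Phi_{+\infty,\theta_0} + \Psi_{h,\theta}$ and observe that the corrector $\Psi_{h,\theta}$ solves a Robin eigenvalue equation with inhomogeneous boundary data of order $1/h$ (coming from the mismatched Dirichlet trace of $\Phi_{+\infty,\theta_0}$) and eigenvalue shift $\lambda(h) - \lambda^D = O(1/h)$; standard boundary-layer estimates on a $C^{2,\alpha}$ domain then give $\|\Psi_{h,\theta}\|_{C^2(\bar S)} \to 0$, which is exactly what the two cases above require.
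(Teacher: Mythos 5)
Your regular-boundary-point argument is essentially a compactness repackaging of what the paper does directly: the paper fixes a closed boundary interval $I$ away from $Z^{bnd}$, notes $\partial_\nu \Phi_{+\infty,\theta_0}>c>0$ there, propagates this to $\partial_\nu\Phi_{h,\theta}>c/2$ on an $h$-independent neighbourhood of $I$, and then reads off from the Robin condition that $\Phi_{h,\theta}=-\tfrac1h\partial_\nu\Phi_{h,\theta}<0$ on the boundary and stays negative on moving inward. Your Taylor identity $0=-\partial_\nu\Phi_{h_n,\theta_n}(p_n)(h_n^{-1}+d_n)+\mathcal O(d_n^2)$ is the same mechanism and is correct, modulo the uniform-in-$h$ $C^2$ bound up to $\partial S$ that you flag but only sketch (for the square this is harmless since the eigenfunctions are explicit; note the paper's version needs only $C^1$-type control). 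However, there is a genuine omission: you never treat the four corners of the square. A corner $z_*$ lies in $\{d(\cdot,Z^{bnd})>\epsilon\}$ whenever no nodal line of $\Phi_{+\infty,\theta_0}$ arrives there, yet your argument collapses at such a point: the nearest-point projection $p_n=\pi_{\partial S}(z_n)$ and the outward normal are not well defined, the Hopf lemma does not apply, and in fact $\partial_\nu\Phi_{+\infty,\theta_0}\to 0$ along the boundary as one approaches the corner (the Dirichlet eigenfunction vanishes there to second order, $\Phi\sim \partial^2_{xy}\Phi(0,0)\,xy$), so your constant $c_0$ degenerates. The paper needs a separate corner argument: it shows $\Phi_{h,\theta}<0$ on the arc $\{x^2+y^2=\epsilon_0^2\}\cap\Omega$ by the regular-point argument, and then excludes a sign change inside $D((0,0),\epsilon_0)$ by invoking Lemma~\ref{lem5.1} (no nodal domain can have area below a fixed $\epsilon_0$-independent threshold). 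Some such non-perturbative input is needed at the corner, and your proof has none.

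Your interior case is also not closed. You correctly observe that an interior limit point $z_*$ must lie on the Dirichlet nodal set $N$ with $d(z_*,\partial S)\le\epsilon$ and $d(z_*,Z^{bnd})\ge\epsilon$, but your escape route --- choosing $\epsilon'\le\epsilon$ with $\eta(\epsilon')>\epsilon'$ --- is unjustified: the claim $\eta(\epsilon')/\epsilon'\to 1$ fails whenever a nodal line meets $\partial S$ non-perpendicularly (e.g.\ at a triple point the entry angles are $\pi/3,2\pi/3$, giving $\eta(\epsilon')\sim(\sqrt3/2)\epsilon'<\epsilon'$), and even a perpendicular but curved nodal line can give $\eta(\epsilon')<\epsilon'$. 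Moreover, ``intermediate $\epsilon$ reduce to this setting'' is not a reduction at all, since the sets $\{d(\cdot,\partial S)<\epsilon\}\cap\{d(\cdot,Z^{bnd})>\epsilon\}$ are not nested in $\epsilon$. What this actually exposes is that the lemma must be read with the strip width decoupled from (and much smaller than) the exclusion radius around $Z^{bnd}$ --- which is what the paper's proof delivers (``there exists a neighbourhood of $I$'' of unspecified, small width on which $\Phi_{h,\theta}<0$). If you restate the conclusion in that form, your interior case disappears and your boundary case, supplemented by a corner argument via Lemma~\ref{lem5.1}, gives a complete proof.
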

In other words we have some nodal stability up to the boundary as $h\rightarrow +\infty$.\\
\begin{proof}
We consider the following two cases.\\
{\bf At a regular point of the boundary.}\\
 We consider a point $z_0$ of the boundary (or a closed interval $I$ in the boundary) which is not a
 critical point for $\Phi_{+\infty,\theta_0}$. By perturbation, this is still true for $|\theta -\theta_0 | $ small.
  In this case the normal derivative of $\Phi_{+\infty,\theta}$ for $z_0\in I$ does not vanish, and to fix the ideas we can assume that
  $$
  \partial_\nu \Phi_{+\infty,\theta} (z_0,\theta) >c >0
  $$
  (the other case would be treated similarly).
  By continuity, replacing $c$ by $\frac c2$, this is still true for $\Phi_{h,\theta}$, $z$ in a $h$-independent neighbourhood of $I$ and $\frac 1h$ small enough.\\
 On the other hand, we know that $\Phi_{h,\theta} $ satisfies the Robin condition:
 $$
 \partial_\nu \Phi_{h,\theta} (z_0,\theta)
 + h \Phi_{h,\theta} (z_0,\theta)  =0\,.
 $$
 Hence
 $$
 \Phi_{h,\theta} (z_0,\theta)   = - \frac 1 h  \partial_\nu \Phi_{h,\theta} (z_0,\theta)  < 0\,.
 $$
 This implies that  there exists a neighbourhood of $I$ and $\eta >0$  such that, for $\frac 1h + |\theta-\theta_0|<\eta$ ,
 $ \Phi_{h,\theta}$ is negative (actually $< - \frac{c}{ 2h}$).
 \\

\noindent
{\bf At a corner.}\\
 After translation, we assume that the corner is at $(0,0)$.
 We also assume that $(0,0)$ does not belong to the nodal set of $\Phi_{+\infty,\theta_0}$ and that
 $\Phi_{+\infty,\theta_0} < 0$ in $\Omega$ near the corner. \\
 We  now use the previous argument outside of $(0,0)$.
 For  $\epsilon_0 >0$ small enough we can take $\eta >0$ small enough such that, for $\frac 1h + |\theta-\theta_0|<\eta$,  $ \Phi_{h,\theta} (x,y) < 0$
  for $\{x^2 +y^2 = \epsilon_0^2 \} \cap \Omega $.\\
  Suppose now that $\Phi_{h,\theta} (x,y) > 0$ for some $(x,y) \in D((0,0),\epsilon_0)$.
  Then  there is
  a nodal domain inside $D((0,0), \epsilon_0)$  and this is excluded by Lemma~\ref{lem5.1}
  provided that  we have chosen $\epsilon_0$ sufficiently small.
 \end{proof}

   \begin{rem}
   We have not proven in full generality that $\Phi_{h,\theta}$ is negative at the boundary near the corner but this is not  required. We do not know what occurs if the corner belongs to the zero set. \\
   If the corner is not in the zero-set of the Dirichlet eigenfunction, we can prove by the previous argument that  this is still the case for $h$ large enough.
   In the case of the square, we get immediately that
  $$
  \partial^2_{x,y} \Phi_{+\infty,\theta_0} (0,0) < 0\,.
  $$
  We now estimate $\Phi_{h,\theta} (0,0)$. Using the Robin condition, we obtain that
  $$
   \Phi_{h,\theta} (0,0) \sim h^{-2}  \partial^2_{x,y} \Phi_{h,\theta} (0,0)
  $$
  By perturbation,  we also  have
  $$
  \partial^2_{x,y} \Phi_{h,\theta} (0,0) < 0\,.
  $$
  This implies
  $$
    \Phi_{h,\theta} (0,0)  < 0\,.
  $$
   \end{rem}

   This leads to the following result when $z_0 \in \partial \Omega $.  We assume that $z_0$ is a critical point of $\Phi_{+\infty,\theta_0}$ associated with an eigenvalue $\lambda(\infty)$.
   We choose $\epsilon_0$ small enough such that
   \begin{itemize}
   \item Lemma~\ref{lem5.1}  applies with $M > \lambda(h_0)$;
   \item $\mathcal C(z_0,\epsilon_0)\cap \Omega$ crosses the $\ell$ half-lines emanating from $z_0$ transversally at $ \ell$ points $z_j(h_0,\theta_0)$ ($j=1,\dots,\ell $).
   \end{itemize}
   Here we have used the general results for the local structure of an eigenfunction of the  Dirichlet Laplacian
   (see \cite{Be}, see also \cite{HHOT} for the case with corners).

   \begin{lem}\label{lem5.3b}
  With the previous  notation and assumptions  of Lemma~\ref{lem5.1}, there exists $\eta_0>0$ such that if $|\frac 1h-\frac{ 1}{h_0}| + | \theta-\theta_0| <\eta_0 $, then the number of nodal domains of $\Phi_{h,\theta}$ intersecting
  the  disc $D(z_0,\epsilon_0)$ cannot increase.\\
  If $\ell=1$, the number of nodal domains equals two and remains fixed.
  \end{lem}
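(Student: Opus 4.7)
The plan is to mirror the interior argument of Lemma~\ref{lem5.3}, splitting into the three cases flagged before the statement: (a) $h_0=+\infty$ with variation only in $\theta$; (b) $0<h_0<+\infty$ with general perturbation; (c) $h_0=+\infty$ with general perturbation. In each case I would first shrink $\epsilon_0$ further if necessary so that Lemma~\ref{lem5.1} applies with $M$ slightly exceeding $\lambda(h_0)$, so that $z_0$ is the only point of $\partial\Omega\cap\overline{D(z_0,\epsilon_0)}$ belonging to the nodal set of $\Phi_{h_0,\theta_0}$, and so that the $\ell$ half-lines emanating from $z_0$ cross the arc $\mathcal{C}(z_0,\epsilon_0)\cap\Omega$ transversally with non-vanishing tangential derivative of $\Phi_{h_0,\theta_0}$ at each crossing $z_j(h_0,\theta_0)$.

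Cases (a) and (b) then proceed in direct analogy with the interior argument. Joint smoothness of $(h,\theta)\mapsto\Phi_{h,\theta}$ together with transversality ensures that, for $\eta_0$ small, the $\ell$ zeros $z_j(h,\theta)$ on $\mathcal{C}(z_0,\epsilon_0)\cap\Omega$ move smoothly and remain the only zeros there, while the sign of $\Phi_{h,\theta}$ on each sub-arc is preserved. The behaviour at $\partial\Omega\cap D(z_0,\epsilon_0)$ is controlled because in (a) the whole Dirichlet boundary remains nodal, while in (b) the single boundary zero near $z_0$ depends smoothly on $(h,\theta)$ and stays isolated. Consequently each of the $\ell$ local nodal domains of $\Phi_{h_0,\theta_0}$ persists, and the only change compatible with this transversal data is a merger of two such domains via a new connecting path, which can only decrease the count. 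A newly born local nodal domain entirely contained in $D(z_0,\epsilon_0)$ would be a genuine nodal component of $\Phi_{h,\theta}$ of area at most $\pi\epsilon_0^2$ and is ruled out by Lemma~\ref{lem5.1} once $\epsilon_0$ is chosen small enough.

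Case (c) is the main obstacle and is the reason the nodal-stability lemma proved just above was needed. Applying that lemma with $\epsilon\ll\epsilon_0$ forces the zero set of $\Phi_{h,\theta}$ inside a fixed neighbourhood of $\partial\Omega$ to be confined to $\epsilon$-neighbourhoods of the Dirichlet boundary zero set $Z^{bnd}$. This pins the local nodal picture of $\Phi_{h,\theta}$ in $D(z_0,\epsilon_0)$ into qualitative agreement with the Dirichlet one near $z_0$: $\ell$ arcs emanating from an $\epsilon$-neighbourhood of $z_0$ and crossing $\mathcal{C}(z_0,\epsilon_0)\cap\Omega$ transversally at points close to $z_j(h_0,\theta_0)$, with no extraneous zeros elsewhere in the disc. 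Once this qualitative agreement is in hand, the merging argument of (a)--(b) applies verbatim. Finally, when $\ell=1$ the single interior arc cuts $D(z_0,\epsilon_0)\cap\Omega$ into exactly two local components carrying opposite signs; the Robin correction $\Phi_{h,\theta}=-h^{-1}\partial_\nu\Phi_{h,\theta}$ on $\partial\Omega$ shows that the boundary inherits the adjacent interior sign and cannot bridge the two components, so transversality plus the absence of extra zeros rigidly preserves the bi-partition and the count is pinned at two.
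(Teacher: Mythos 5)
Your proposal follows essentially the same route as the paper: the same three-case split (Dirichlet with $\theta$-perturbation only, finite $h_0$, and Dirichlet limit under general perturbation), the reduction of the first two cases to the interior argument of Lemma~\ref{lem5.3} via transversality and Lemma~\ref{lem5.1}, and the use of the boundary nodal-stability lemma to handle the delicate third case. The extra detail you supply for $\ell=1$ (opposite signs of the two local components preventing any merger) is consistent with, and slightly more explicit than, what the paper records.
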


  \subsection{Application to the square.}
  We come back to the case of the square and prove Theorem~\ref{thm:hlarge}.
  To this end, having in mind Proposition~\ref{prop:hlarge}, it is sufficient to obtain the following.
    \begin{prop}\label{pro:hlarge5}
    There exists $h_0 > 0$ such that for any $h > h_0$, any eigenfunction corresponding to $\frac{1}{\pi^2} (\alpha_0(h)^2 + \alpha_2(h))^2$ has 2, 3, or 4 nodal domains (as in the Dirichlet case). Hence for $h > h_0$, $\lambda_{5,h}$ is not Courant-sharp.
  \end{prop}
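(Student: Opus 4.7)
The plan is to reduce the statement to a compactness argument over the parameter $\theta$ that labels eigenfunctions in the two-dimensional eigenspace, and then apply the general perturbation result (Proposition~\ref{propPerturb}) at $h=+\infty$. Since $\lambda_{5,h}(S)=\pi^{-2}(\alpha_0(h)^2+\alpha_2(h)^2)$ has multiplicity exactly $2$ (for $h$ large enough, via \eqref{eq:4.10} and the fact that this is so in the Dirichlet case), every associated eigenfunction, up to multiplicative scaling, is of the form
\[
  \Phi_{h,\theta}(x,y)=\cos\theta\, u_{0,h}(x)u_{2,h}(y)+\sin\theta\, u_{2,h}(x)u_{0,h}(y),\qquad \theta\in \mathbb R/(2\pi\mathbb Z).
\]
The parameter $\theta$ ranges over a compact set, and $\Phi_{h,\theta}$ depends smoothly on $(h,\theta)$.

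First, I would invoke the Dirichlet result of \cite{BH1}: for $h=+\infty$ and \emph{every} $\theta_0$, the function $\Phi_{+\infty,\theta_0}$ has at most $4$ nodal domains (the precise count being $2$, $3$, or $4$ depending on $\theta_0$). Next, for each fixed $\theta_0$, Proposition~\ref{propPerturb} applied at $(h_0,\theta_0)=(+\infty,\theta_0)$ yields some $\eta_0=\eta_0(\theta_0)>0$ such that
\[
  \rho(h,\theta)\le \rho(+\infty,\theta_0)\le 4\qquad\text{whenever } \tfrac 1h+|\theta-\theta_0|<\eta_0.
\]
Then the open sets $V_{\theta_0}:=\{\theta:|\theta-\theta_0|<\eta_0(\theta_0)\}$ cover $\mathbb R/(2\pi\mathbb Z)$; by compactness, a finite subcover $V_{\theta_1},\dots,V_{\theta_N}$ suffices. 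Setting $\eta_{\min}:=\min_{1\le i\le N}\eta_0(\theta_i)>0$ and $h_0:=1/\eta_{\min}$, we obtain that for every $h>h_0$ and every $\theta$ there is some $i$ with $\theta\in V_{\theta_i}$, hence $\rho(h,\theta)\le 4$.

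Since Courant-sharpness of $\lambda_{5,h}(S)$ would require some eigenfunction with exactly $5$ nodal domains, the bound $\rho(h,\theta)\le 4$ rules this out for all $h>h_0$, which is the desired conclusion. The main technical obstacle is implicit rather than in this final argument itself: it lies in the validity of Proposition~\ref{propPerturb} at $(+\infty,\theta_0)$, i.e.\ in the control near the corners and along the whole Dirichlet-vanishing boundary as $h\to+\infty$; but this has already been settled in Section~\ref{sp5}. The remaining check is routine: multiplicity of $\lambda_{5,h}$ is exactly $2$ for $h$ large (so the parametrisation by $\theta$ captures all eigenfunctions), and the qualitative description of $\rho(+\infty,\theta_0)\in\{2,3,4\}$ transfers to $\rho(h,\theta)\in\{2,3,4\}$ by combining the upper bound just established with the lower bound $\rho\ge 2$ that is automatic for any non-constant eigenfunction.
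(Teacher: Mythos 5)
Your argument is essentially the paper's own proof: the paper disposes of Proposition~\ref{pro:hlarge5} in one line by noting that the property holds at $h=+\infty$ (the Dirichlet analysis of \cite{BH1}) and that by the results of Section~\ref{sp5} the number of nodal domains cannot increase under perturbation and is necessarily $>1$; you have merely made explicit the compactness-in-$\theta$ step needed to extract a single $h_0$ valid for all $\theta$, which the paper leaves implicit. One small repair to your covering argument: if $\theta\in V_{\theta_i}$ and $h>1/\eta_{\min}$ you only get $\tfrac1h+|\theta-\theta_i|<2\eta_0(\theta_i)$, so you should cover the circle by the half-radius sets $\{\theta:|\theta-\theta_i|<\eta_0(\theta_i)/2\}$ and take $h_0:=2/\eta_{\min}$ to legitimately invoke Proposition~\ref{propPerturb}.
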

  \begin{proof}
  The property is indeed true for $h=+\infty$ and,  by the results of the preceding sections, the number of nodal domains cannot increase and is necessarily $>1$.
  \end{proof}

   In the next section, we carry out a deeper analysis for the eigenfunction associated with the fifth eigenvalue, where we
   count the nodal domains case by case. For some cases, the proof will use the explicit properties of the eigenfunctions $\Phi_ {h,\theta}$ (see below).

   In relation to Proposition~\ref{pro:hlarge5}, we note that by choosing non-critical values of $\theta$ we can
   obtain that $2$, $3$ and $4$ nodal domains are attained for $h_0$ large enough.

\section{Particular case $k=5\,$.}\label{s:5CS}

 \subsection{Main statement.}

 Looking at the fifth eigenvalue corresponding to the pair $(0,2)$, which is Courant-sharp for Neumann and not Courant-sharp for Dirichlet, we consider the family of eigenfunctions in $(-\frac \pi 2,+\frac \pi 2)^2$  with $\theta \in (-\pi,\pi]$:
  \begin{align}\label{fiftheigen}
   \Phi_{h,\theta,0,2}(x,y)&:=\cos \theta \cos (\alpha_0(h) x /\pi)  \cos (\alpha_2(h) y /\pi) \notag\\
  & \ \ \ \ \ \ + \sin \theta \cos (\alpha_2(h) x/\pi)  \cos (\alpha_0(h)  y/\pi)\,.
  \end{align}
\noindent
  Up to changing the sign of the eigenfunction, it is sufficient to consider $\theta \in [0,\pi)$.
We  prove the following proposition.
  \begin{prop}\label{pro:hlarge5a}
    There exists $h_0 > 0$ such that for any $h > h_0$, any eigenfunction corresponding to $\frac{1}{\pi^2} (\alpha_0(h)^2 + \alpha_2(h))^2$ has 2, 3, or 4 nodal domains (as in the Dirichlet case).
 More precisely, there are three critical values $\theta_j^*(h)\in [0,\pi)$ ($j=1, 2,3$) such that
 $$
 \theta_1^*(h) =  \arctan \left(-  \frac{1}{q_2(h)}\right)\,, \,\theta_2^*(h) = \frac \pi 2 - \theta_1^*(h)\,, \,\theta_3^*= \frac {3\pi}{4}\,,
 $$
 where
 $$q_2(h)=  \frac{\cos \left( \frac{\alpha_{2} }{2} \right)}{  \cos \left( \frac{\alpha_{0} }{2}\right)}\, ,$$
 and  such that $\Phi_{h,\theta}$ has:
 \begin{itemize}
\item  $3$ nodal domains for $\theta \in [0,\theta^*_1(h)]$;
\item $2$ nodal domains for $\theta \in (\theta_1^*(h),\theta^*_2(h))$;
\item $3$ nodal domains for  $\theta \in [\theta_2^*(h),\theta_3^*)$;
\item $4$ nodal domains for $\theta=\theta_3^*$;
\item $3$ nodal domains for $\theta\in (\theta_3^*,\pi)$.
\end{itemize}
      \end{prop}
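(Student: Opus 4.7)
My approach identifies the three critical angles $\theta_j^*(h)$ as the $\theta$-values at which $\Phi_{h,\theta}$ vanishes at a distinguished boundary point, and then propagates the nodal count on each open interval using the semi-stability of Proposition~\ref{propPerturb} and continuity with the Dirichlet limit $h=+\infty$, whose nodal picture is classical (\cite{Pl,BH1}). First I would compute the value of $\Phi_{h,\theta}$ at the edge midpoint $(\pi/2,0)$ and at the corner $(\pi/2,\pi/2)$. Using $\cos(\alpha\cdot 0/\pi)=1$ and $\cos(\alpha\cdot(\pm\pi/2)/\pi)=\cos(\alpha/2)$, one obtains
\begin{equation*}
\Phi_{h,\theta}(\tfrac{\pi}{2},0)=\cos\theta\,\cos(\alpha_0/2)+\sin\theta\,\cos(\alpha_2/2),
\end{equation*}
which vanishes exactly when $\tan\theta=-\cos(\alpha_0/2)/\cos(\alpha_2/2)=-1/q_2(h)$, giving $\theta_1^*(h)$. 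By the swap symmetry $(x,y)\mapsto(y,x)$ (which sends $\theta\mapsto\pi/2-\theta$), the analogous calculation at $(0,\pi/2)$ gives $\theta_2^*(h)=\pi/2-\theta_1^*(h)$. At the corner one finds $\Phi_{h,\theta}(\pi/2,\pi/2)=(\cos\theta+\sin\theta)\cos(\alpha_0/2)\cos(\alpha_2/2)$, which vanishes in $[0,\pi)$ iff $\theta=3\pi/4=\theta_3^*$. A direct calculation using parity of $\cos$ shows that at $\theta=\theta_3^*$ the eigenfunction also vanishes on the anti-diagonal $y=-x$; together with the anti-symmetry $\Phi_{h,\theta_3^*}(y,x)=-\Phi_{h,\theta_3^*}(x,y)$ (which forces the diagonal $y=x$ to be nodal as well), the two diagonals already partition $S$ into four triangles.

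Second, I would pin down the count at the reference angles $\theta=0,\pi/2,\theta_3^*$. At $\theta=0$, $\Phi_{h,0}$ factorises as $\cos(\alpha_0x/\pi)\cos(\alpha_2y/\pi)$; since $\alpha_0\in[0,\pi)$ the first factor has no zero in $(-\pi/2,\pi/2)$, while $\alpha_2\in[2\pi,3\pi)$ makes the second factor vanish on exactly the two horizontal lines $y=\pm\pi^2/(2\alpha_2)$, giving $3$ nodal domains. The case $\theta=\pi/2$ is identical by the swap symmetry. At $\theta=\theta_3^*$, the two diagonals give $\geq 4$ nodal domains; since the Dirichlet limit has exactly $4$ (with no further nodal lines, \cite{BH1}), Proposition~\ref{propPerturb} prevents any additional nodal lines from appearing for $h$ large, so the count is exactly $4$.

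Third, on each open regime between consecutive critical values, the nodal count is locally constant: for $\theta$ in the interior of such an interval, $\Phi_{h,\theta}$ has no non-transversal intersection with $\partial S$ (by the choice of $\theta_j^*$ as the only zeros on the natural one-parameter family of boundary tests), and by compactness its interior critical structure is Morse-type for $h$ large enough (using that, by \eqref{eq:4.10}, $\alpha_p(h)\to (p+1)\pi$). Hence the transversality arguments of Section~\ref{sp5} apply, so by Proposition~\ref{propPerturb} applied from both sides, $\rho(h,\theta)$ is constant on each open interval. Matching this constant with the value at $h=+\infty$ (where the asymptotics $\alpha_0\to\pi$, $\alpha_2\to 3\pi$ give $q_2\to-3$, whence $\theta_1^*(+\infty)=\arctan(1/3)$ and $\theta_2^*(+\infty)=\arctan 3$, with counts $3,2,3,3$ on the four open intervals by \cite{BH1}) fixes the values to be those stated.

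The main obstacle I anticipate is a rigorous, $h$-uniform treatment of the topological transitions at $\theta_1^*$ and $\theta_2^*$, where a nodal arc touches the boundary at an edge midpoint and two horizontal arcs reconnect into a single closed curve, dropping the count from $3$ to $2$. I would handle this via a local normal-form analysis near $(\pi/2,0)$: for $\theta$ close to $\theta_1^*(h)$, $\Phi_{h,\theta}(\pi/2,0)$ is a first-order non-degenerate function of $\theta-\theta_1^*(h)$, while the tangential derivative $\partial_y\Phi_{h,\theta_1^*}$ at this point is also controllable (non-zero for $h$ large by continuity with Dirichlet), so the zero set near $(\pi/2,0)$ is a small hyperbola whose two branches touch $\partial S$ at $\theta=\theta_1^*$ and detach on the appropriate side otherwise. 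Combined with Lemma~\ref{lem5.1} (which prevents spurious small nodal domains) and the Dirichlet matching, this finishes the count in the regime $(\theta_1^*,\theta_2^*)$; the transition at $\theta_2^*$ is symmetric.
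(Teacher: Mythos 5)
Your proposal follows essentially the same route as the paper: the critical angles are obtained from the same boundary evaluations (the paper derives $\theta_1^*(h)$ from the double zero at $y=0$ of the edge restriction $\cos\theta\cos(\alpha_0/2)\cos(\alpha_2 y/\pi)+\sin\theta\cos(\alpha_2/2)\cos(\alpha_0 y/\pi)$, which is your midpoint computation; $\theta_2^*$ by the swap symmetry; $\theta_3^*$ from the corner and the two diagonals), the reference counts at $\theta=0,\pi/2,3\pi/4$ are correct, and your proposed ``small hyperbola'' normal form near $(\pi/2,0)$ is exactly the paper's expansion $y^2=(c+\mathcal O(1/h))(t+1/q_2(h))$. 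The combination with the semi-stability of Proposition~\ref{propPerturb} and the Dirichlet limit is also how the paper closes the argument.

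The genuine gap is in your third step, the claim that $\rho(h,\theta)$ is constant on each open $\theta$-interval. First, Proposition~\ref{propPerturb} is only an upper semicontinuity statement ($\rho$ cannot \emph{increase} under perturbation); ``applied from both sides'' it still permits $\rho$ to drop at an interior point of an interval, so it does not yield constancy. What is actually needed is that for $\theta$ strictly between consecutive critical values the nodal set has no singular points at all, i.e.\ (a) no interior point where $\Phi_{h,\theta}=0$ and $\nabla\Phi_{h,\theta}=0$, and (b) no tangency of a nodal arc with $\partial S$ away from the edge midpoints. Your argument does not establish either: ``the interior critical structure is Morse-type for $h$ large'' does not exclude a nondegenerate saddle sitting at zero level for some $\theta$ inside an open interval, and testing $\Phi$ only at the edge midpoint and the corner does not exclude a double zero of the edge restriction at some $y_0\neq 0$. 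The paper supplies precisely these two missing ingredients: the Wronskian computation (Lemma~\ref{lemabove}), valid for every $h>0$, which shows that $(0,0)$ is the \emph{only} possible interior critical point, so that an interior transition can occur only when $\cos\theta+\sin\theta=0$, i.e.\ at $\theta_3^*$; and Sturm's theorem (Theorem~\ref{T-st2r}), which bounds the number of zeros of the edge restriction by $2$ and, combined with the evenness of that restriction in $y$, forces any double zero to lie at the midpoint $y=0$, i.e.\ at $\theta=\theta_1^*(h)$ (and at $\theta_2^*(h)$ for the horizontal edges). Without an argument of this type your ``Dirichlet matching'' of the counts on the open intervals is not justified, since an unaccounted transition inside an interval would change the count there while remaining consistent with the upper bounds from Proposition~\ref{propPerturb}.
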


  Note that for the whole family of eigenfunctions, we have symmetry with respect to the two axes. In addition,
  the corresponding eigenvalue $\frac{1}{\pi^2} (\alpha_0(h)^2 +\alpha_2(h)^2)$ is the fifth eigenvalue
  for any $h\in [0,+\infty]$  (due to monotonicity of the Robin eigenvalues with respect to $h$ and the table given
  in Appendix~\ref{sA}).\\
  For $h=0$, we have $\alpha_0(0)=0$ and $\alpha_2(0) = 2 \pi  $.

  \subsection{The Dirichlet case.}

\begin{figure}[!h]
\centering
  \includegraphics[width=8cm]{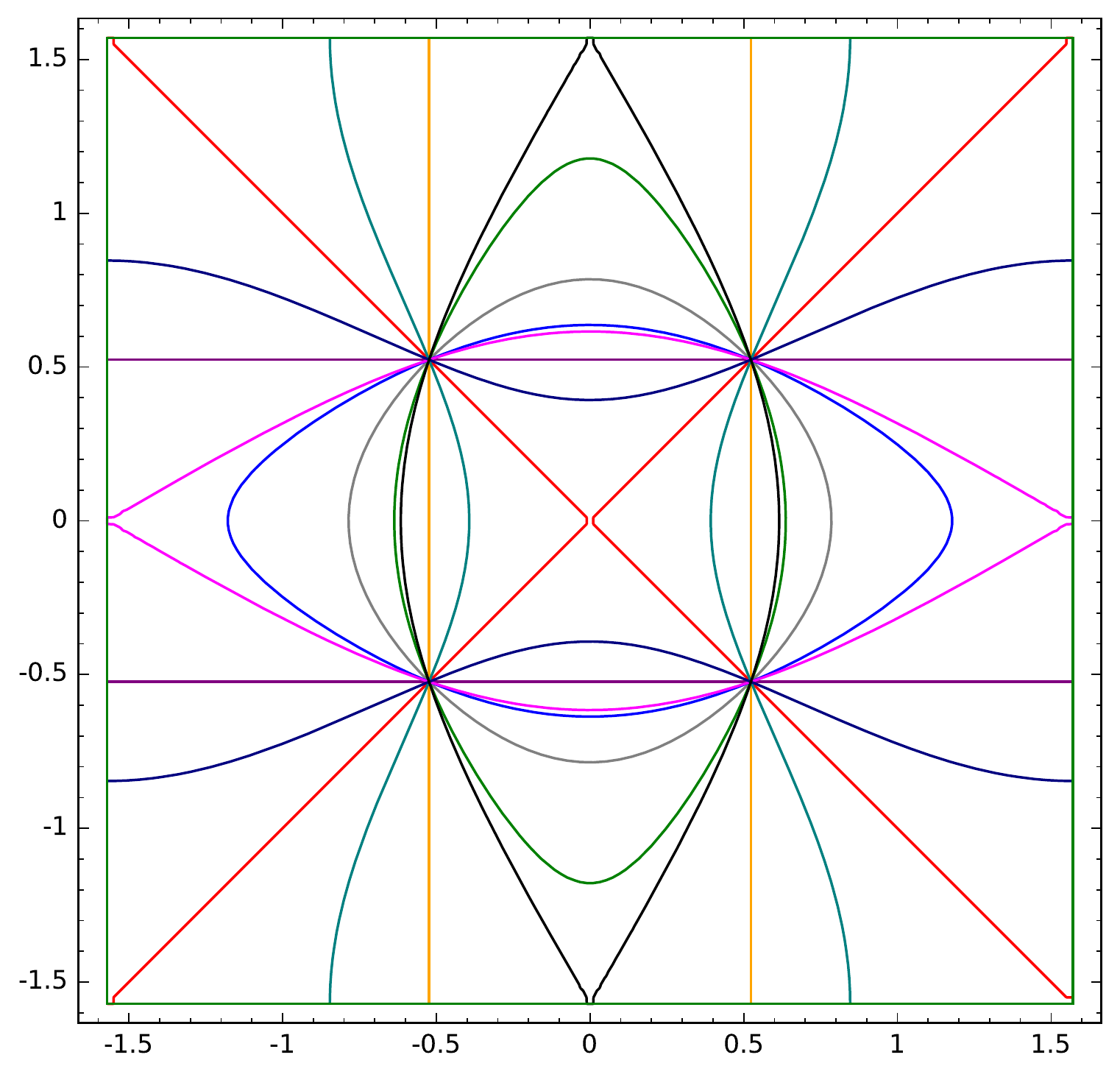}
\caption{The fifth Dirichlet eigenfunction  $\Phi_{+\infty,\theta,0,2}$ on the square $ (-\frac{\pi}{2},\frac{\pi}{2})^2$, for various values of $\theta$.
The values $\theta=0$,  $\theta_1^*=\arctan (1/3)$, $\frac{\pi}8$,
 $\frac{\pi}4$, $\frac{3\pi}{8}$, $\theta_2^*=\frac{\pi}2 - \arctan(1/3)$, $\frac{\pi}2$, $\frac{5\pi}8$, $\theta_3^*=\frac{3\pi}4$, $\frac{7\pi}8$ correspond to the purple, magenta, blue,
grey,  green, black, orange, teal, red, navy curves respectively.}
\label{fig:hDircom}
\end{figure}

For $h=+\infty$, i.e. in the Dirichlet case, we have $\alpha_0(+\infty) =  \pi$ and $\alpha_2(+\infty) = 3 \pi$. The figures of Pockel, \cite{Poc}, give the various possibilities as a function of $\theta$. We refer to \cite{BH1} for a more rigorous mathematical analysis but note that Pockel gives all the possible topologies. He also gives the pictures for the $\theta$ corresponding to transitions  between these topologies.
In Figure~\ref{fig:hDircom}, we plot the fifth Dirichlet eigenfunction
 $$\Phi_{+\infty,\theta,0,2}(x,y) = \cos \theta \cos(x)\cos(3y) + \sin\theta \cos(3x)\cos(y)$$
for $(x,y) \in (-\frac{\pi}{2},\frac{\pi}{2})^2$ and various values of $\theta$. \\
The critical values of $\theta$ corresponding to a change in the number of  interior critical points or the number of boundary  critical  points in the nodal set are $\theta_1^*= \arctan (1/3)$, $\theta_2^*= \frac{\pi}2 - \arctan(1/3)$,
and $\theta_3^* = \frac {3\pi}{4}$.\\

As was proven in \cite{BH1} and can be seen in Figure~\ref{fig:hDircom}, the fifth  Dirichlet eigenfunction has either 2, 3 or 4 nodal domains. More precisely, we have for $\theta \in [0,\pi)$:
\begin{itemize}
\item  $3$ nodal domains for $\theta \in [0,\theta_1^*]$;
\item $2$ nodal domains for $\theta \in (\theta_1^*,\theta_2^*)$;
\item $3$ nodal domains for  $\theta \in [\theta_2^*,\theta_3^*)$;
\item $4$ nodal domains for $\theta=\theta_3^*$;
\item $3$ nodal domains for $\theta\in (\theta_3^*,\pi)$.
\end{itemize}
In what follows, we prove that this holds for $h$ sufficiently large.

\subsection{Application of Section \ref{sp5}.}

For $h$ large enough, we analyse
\begin{align*}
 \psi(\theta,x,y):= \Phi_{h,\theta,0,2}(x,y) &= \cos \theta \cos \left( \frac{\alpha_{0} x}{\pi}\right) \cos \left( \frac{\alpha_{2} y}{\pi} \right)\\
  & \ \ \ \ \ \ + \sin \theta \cos \left( \frac{\alpha_{2} x}{\pi} \right) \cos \left( \frac{\alpha_{0} y}{\pi} \right)\,.
\end{align*}
This solution has a double symmetry with respect to $x\mapsto  -x $ and $y \mapsto  -y$.\\

 \subsubsection{Interior critical points.}\label{sssintp}
   We can look at the critical points of $\psi$  as a function of $\theta$.
   In the case of Dirichlet,  the only possible critical point is
   for $x=y=0$ and can only occur for $\cos \theta +\sin \theta=0$ (we assume $\theta \neq \mathbb Z \frac \pi 2$).

For $\cos \theta + \sin \theta =0$, $x=\pm y$ belong to the zero set  of $\psi$.
We  show that the zero set is exactly given by $x=\pm y$.
We observe that the Hessian of $\psi$ at $(x,y)=(0,0)$ is
    \[
  H_{(x,y)=(0,0)}= \frac{\cos \theta}{\pi^2}
  \begin{pmatrix}
    \alpha_2^2 - \alpha_0^2 & 0 \\
    0 & \alpha_0^2 - \alpha_2^2
  \end{pmatrix},
  \]
  which has negative determinant so $(x,y)=(0,0)$ is a non-degenerate critical point of $\psi$.
  We see that $H_{(x,y)=(0,0)}$ has one positive eigenvalue and one negative eigenvalue, so the Morse index
  of the critical point $(0,0)$ is $1$. By the Morse Lemma, in a neighbourhood $U$ of $(0,0)$, there is a
  diffeomorphism $\phi = (u,v) : U \mapsto V \subset \R^2$ with $\phi(0,0)=(0,0)$ such that $\tilde{\psi}
  :=\psi \circ \phi^{-1}$ has the form
  \begin{equation*}
  \tilde{\psi}(u,v) = \tilde{\psi}(0,0) - u^2 + v^2 = \cos\theta + \sin\theta - u^2 + v^2.
  \end{equation*}
  So we see immediately that the critical point $(0,0)$ is isolated.
  With the condition that $\cos\theta + \sin\theta =0$, the zero set is given by $u= \pm v$.
  Since $\phi$ is a bijection and $x= \pm y$ is contained in the zero set of $\psi$, the zero set of $\psi$
  is given by $x = \pm y$.\\
  More generally, the same proof gives that the zero set of  $\psi(\theta,\cdot) - (\cos \theta + \sin \theta)$ is given near $(0,0)$ by $x=\pm y$. We remark that in this case there are 4 nodal domains.\\

\subsubsection{Boundary edge.}

Considering the boundary edge $x=\frac{\pi}{2}$, we have that either $y = \pm \frac{\pi}{2}$ is in the nodal set, in which case there are 4 nodal domains by symmetry, or $y=\pm \frac{\pi}{2}$ is not in the nodal set. In the latter case,
Theorem~\ref{T-st2r} gives that there are at most two points on the boundary edge $x=\frac{\pi}{2}$ that are in the nodal set. If there are exactly two such points in the nodal set, then this corresponds to 3 nodal domains. If there are no boundary points in the nodal set, then this corresponds to 2 nodal domains. For example, see Figure~\ref{fig:hDircom}.

\subsubsection{Double point  on  the boundary.}
  We now analyse what is going on at the double point on the boundary. This occurs for Dirichlet when $\tan \theta=\frac 13$ and for $y=0$.  Here the situation is simple
   (see \cite{Poc}). We observe that $y=0$ is a double point
   for $\tan \theta = -  \frac{1}{q_2(h)}$. From $\Psi(\theta, \frac{\pi}{2}, y)=0$, we have
   $$\cos \left( \frac{\alpha_{2} y}{\pi} \right) + t   \frac{\cos \left( \frac{\alpha_{2} }{2} \right)}{  \cos \left( \frac{\alpha_{0} }{2}\right)}  \cos \left( \frac{\alpha_{0} y}{\pi} \right) =0\,.
     $$
     The critical $t=\tan \theta$ is defined by $t= - 1/q_2(h)$ with
     $$
     q_2(h)=  \frac{\cos \left( \frac{\alpha_{2} }{2} \right)}{  \cos \left( \frac{\alpha_{0} }{2}\right)}\,.$$
     Hence $t=\frac 13 + \mathcal O (\frac 1h)$, and we have near $y=0$,
     $$ y^2 = \left(c  + \mathcal O \left(\frac 1h\right)\right) \left(t + \frac{1}{q_2(h)}\right).$$

     Again, this is the perturbation of a Morse function depending on the parameters $h$ and $\theta$ with the particularity that when $\psi=0$ and $y=0$, the critical point is always $(\frac \pi 2,0)$.
     We remark that in this case there are 3 nodal domains.

     \subsection{Interior critical points for any $h>0$.}
  In this subsection, we show that there are no other critical points than $(0,0)$ without  any restriction
  on $h>0$.
  It is immediate that $(0,0)$ is a critical point and we get the same condition  as in the Dirichlet case.
   Writing $\psi =0$ and $\nabla \psi =0$, we get as a necessary condition that
   \begin{equation}\label{crit2}
   \alpha_2 \tan \left(\frac{\alpha_2 x}{\pi}\right) = \alpha_0 \tan \left(\frac{\alpha_0 x}{\pi}\right)
    \,,\, \alpha_2 \tan \left(\frac{\alpha_2 y}{\pi}\right) = \alpha_0 \tan \left(\frac{\alpha_0 y}{\pi}\right) \,.
    \end{equation}

  \begin{lem}\label{lemabove} Let $\alpha_0$ and $\alpha_2$ satisfy \eqref{eq:alphaneven}. For $x\in (-\frac \pi 2,+ \frac \pi 2)$,
 $ \alpha_0 \tan (\alpha_0 x/\pi) = \alpha_2 \tan (\alpha_2 x/\pi) $ if and only if $x =0$.
 \end{lem}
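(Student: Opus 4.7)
The plan is to set $F(x) := \alpha_{2}\tan(\alpha_{2}x/\pi) - \alpha_{0}\tan(\alpha_{0}x/\pi)$, which is odd in $x$, so I restrict attention to $x\in(0,\pi/2)$. From \eqref{eq:alphaneven} applied with $p=0,2$ (and recalling $h>0$, $\ell=\pi$, together with $\alpha_{p}\in[p\pi,(p+1)\pi)$), I have $\alpha_{0}\in(0,\pi)$ and $\alpha_{2}\in(2\pi,3\pi)$, so $F$ has a single singularity on $(0,\pi/2)$ at $x_{\star}:=\pi^{2}/(2\alpha_{2})\in(\pi/6,\pi/4)$, with $F(x)\to+\infty$ as $x\to x_{\star}^{-}$ and $F(x)\to-\infty$ as $x\to x_{\star}^{+}$. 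Two zeros are immediate: $F(0)=0$, and the Robin condition \eqref{eq:alphaneven} gives $F(\pi/2)=\alpha_{2}\tan(\alpha_{2}/2)-\alpha_{0}\tan(\alpha_{0}/2)=h\pi-h\pi=0$. The task reduces to ruling out further zeros on $(0,x_{\star})\cup(x_{\star},\pi/2)$.

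The crucial observation is the identity, obtained from $\sec^{2}\theta=1+\tan^{2}\theta$ together with a difference-of-squares factorisation,
\begin{equation*}
\pi F'(x)=\alpha_{2}^{2}\sec^{2}(\alpha_{2}x/\pi)-\alpha_{0}^{2}\sec^{2}(\alpha_{0}x/\pi)=(\alpha_{2}^{2}-\alpha_{0}^{2})+F(x)\,G(x),
\end{equation*}
where $G(x):=\alpha_{2}\tan(\alpha_{2}x/\pi)+\alpha_{0}\tan(\alpha_{0}x/\pi)$. At every zero $x_{0}$ of $F$ one therefore has $\pi F'(x_{0})=\alpha_{2}^{2}-\alpha_{0}^{2}>0$: every zero of $F$ is a simple \emph{up-crossing}. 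This factorisation is the heart of the argument, and recognising it is essentially the only subtlety; once it is in hand, the Robin-specific equation \eqref{eq:alphaneven} enters only through the boundary zero $F(\pi/2)=0$.

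With that in hand, the rest is an elementary sign chase on each subinterval. On $(0,x_{\star})$, $F(0)=0$ with $F'(0)>0$ gives $F>0$ on a right neighbourhood of $0$; if $x_{0}\in(0,x_{\star})$ were the smallest additional zero, then $F\geq 0$ on $[0,x_{0}]$ would force $F'(x_{0})\leq 0$, contradicting the up-crossing identity. On $(x_{\star},\pi/2)$, $F\to-\infty$ at the left end while $F(\pi/2)=0$ with $F'(\pi/2)>0$ gives $F<0$ on a left neighbourhood of $\pi/2$; so any zero $x_{1}\in(x_{\star},\pi/2)$, taken to be the largest one, would satisfy $F\leq 0$ on $[x_{1},\pi/2]$, forcing $F'(x_{1})\leq 0$ and again contradicting the identity. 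Therefore $x=0$ is the only zero of $F$ in $(-\pi/2,\pi/2)$, and the argument works uniformly for all $h>0$.
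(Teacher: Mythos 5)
Your proof is correct, but it follows a genuinely different route from the paper's. The paper clears denominators and recognises
$W(x)=\alpha_0\sin(\alpha_0 x/\pi)\cos(\alpha_2 x/\pi)-\alpha_2\sin(\alpha_2 x/\pi)\cos(\alpha_0 x/\pi)$, which equals $-F(x)\cos(\alpha_0 x/\pi)\cos(\alpha_2 x/\pi)$ in your notation, as (a renormalisation of) the Wronskian of the one-dimensional eigenfunctions $u_0$ and $u_2$; the Sturm--Liouville identity $W'=(\lambda_0-\lambda_2)\,u_0u_2$ shows that $W$ has exactly one critical point in $(0,\pi/2)$ (the unique zero of $u_2$ there, $u_0$ being nowhere zero), and since $W(0)=W(\pi/2)=0$, Rolle's theorem forbids any further zero. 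You instead work with the singular function $F$ itself and exploit the Riccati-type identity $\pi F'=(\alpha_2^2-\alpha_0^2)+FG$, which makes every zero of $F$ a simple up-crossing, and then run a sign chase on the two subintervals separated by the pole $x_\star$. The two identities are cousins (yours is essentially the Wronskian identity divided by the product of the cosines), but the arguments differ in substance: the paper's is shorter once one identifies the Wronskian and invokes the oscillation count for $u_0,u_2$, whereas yours is self-contained, needing neither that identification nor any zero-counting of eigenfunctions, at the cost of locating the pole of $\tan(\alpha_2 x/\pi)$ and treating the two subintervals separately. Both proofs use \eqref{eq:alphaneven} only to produce the boundary zero at $x=\pi/2$ (equivalently $W(\pi/2)=0$), and both cover all $h>0$ uniformly.
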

 \begin{proof}
 Let us look at the function $$[0,\frac \pi 2] \ni  x\mapsto W(x)= \alpha_0 \sin (\alpha_0 x) \cos (\alpha_2 x) -  \alpha_2 \sin (\alpha_2 x) \cos (\alpha_0 x)\,.
 $$
  Up to some multiplicative renormalisation of the eigenfunctions, we recognise the Wronskian of the eigenfunctions  $u_0$ and $u_2$.
 But for the Wronskian, we have
 $$
 W'(x) =( \lambda_0-\lambda_2) u_0 (x) u_2 (x)\,.
 $$
 Now we observe that  $W(0)=0$ and that by \eqref{eq:alphaneven}, $ W(\frac \pi 2)=0$. Moreover  $W$ has a unique critical point in $(0,\frac \pi 2)$ at the  first  zero of $u_2$. Hence $W(x)$ cannot vanish except at $x=0$ and $\frac \pi 2$.
  \end{proof}
It is clear that this implies that $(0,0)$ is the only possible critical point in $(-\frac \pi 2, \frac \pi 2)^2$. The condition that $\psi (0,0)=0$ implies $\cos \theta + \sin \theta=0$. \\

\section{Analysis of  crossings.}\label{sec:s5}
In this section, we analyse the possible crossings of two curves $h\mapsto \lambda_{p,q,h}(S)$ and $h\mapsto \lambda_{p',q',h}(S)$ defined in an interval of $[0,+\infty)$.  This is indeed quite important  as we want to follow the labelling of these eigenvalues when $h$ varies.
\subsection{A general result.}

\begin{prop}\label{p:crossing}
For distinct pairs $(p,q)$ and $(p',q')$, with $p\leq q$ and $p'\leq q'$, there is at most one value of $h$ in $[0,+\infty)$ such that
$\lambda_{p,q,h}(S) = \lambda_{p',q',h}(S)$.
\end{prop}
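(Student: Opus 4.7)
The plan is to set $F(h) := \pi^{2}\bigl(\lambda_{p,q,h}(S) - \lambda_{p',q',h}(S)\bigr) = \phi_p(h) + \phi_q(h) - \phi_{p'}(h) - \phi_{q'}(h)$, with $\phi_k(h) := \alpha_k(h)^{2}$, and to prove that $F'(h^{*}) < 0$ at every zero $h^{*} > 0$ of $F$. Once this sign condition is established, uniqueness is immediate: a smooth function whose derivative is strictly negative at each of its zeros can only cross $0$ from $+$ to $-$, so it has at most one zero in $(0,+\infty)$. A short separate argument will deal with a possible additional zero at $h = 0$.

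To obtain a useable expression for $\phi_k'(h)$, I would differentiate the transcendental relations \eqref{eq:alphaneven} and \eqref{eq:alphanodd} implicitly in $h$, and then eliminate the trigonometric factors using $\tan^{2}(\alpha_k/2) = h^{2}\pi^{2}/\alpha_k^{2}$ (resp.\ $\cot^{2}(\alpha_k/2) = h^{2}\pi^{2}/\alpha_k^{2}$). With $\ell = \pi$ this yields, in both parities, the single formula
\begin{equation*}
\phi_k'(h) \;=\; G\bigl(\phi_k(h);h\bigr),
\qquad
G(s;h) := \frac{4\pi\, s}{s+c(h)},\quad c(h) := 2\pi h + \pi^{2} h^{2}.
\end{equation*}
The striking point is that the slope at $h$ of each of the four curves $\phi_p,\phi_q,\phi_{p'},\phi_{q'}$ is prescribed by one and the same scalar function $G(\cdot\,;h)$ evaluated at four different arguments. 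Writing $G(s;h) = 4\pi - 4\pi c(h)/(s+c(h))$ shows that, for every $h > 0$, the map $s \mapsto G(s;h)$ is strictly increasing and strictly concave on $(0,+\infty)$.

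Now suppose $h^{*} > 0$ satisfies $F(h^{*}) = 0$. Because the $\alpha_k(h)$ lie in the disjoint intervals $[k\pi,(k+1)\pi)$, the assumption $(p,q) \neq (p',q')$ with $p \leq q$, $p' \leq q'$ forces $p \neq p'$, so after relabelling the two pairs I may assume $p < p'$, whence $\phi_p(h^{*}) < \phi_{p'}(h^{*})$. Combined with $\phi_p + \phi_q = \phi_{p'} + \phi_{q'}$ and $p' \leq q'$, this yields
\begin{equation*}
\phi_p(h^{*}) \;<\; \phi_{p'}(h^{*}) \;\leq\; \phi_{q'}(h^{*}) \;<\; \phi_q(h^{*}),
\end{equation*}
so the pair $(\phi_p,\phi_q)$ strictly majorizes $(\phi_{p'},\phi_{q'})$. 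The Schur-concavity of $(a,b)\mapsto G(a;h^*)+G(b;h^*)$ inherited from strict concavity of $G(\cdot;h^{*})$ then gives
\begin{equation*}
F'(h^{*}) \;=\; \bigl[G(\phi_p)+G(\phi_q)\bigr] - \bigl[G(\phi_{p'})+G(\phi_{q'})\bigr] \;<\; 0,
\end{equation*}
which is the required sign.

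The one remaining loose end is $h = 0$, where $c(0) = 0$ makes $G(\cdot;0)$ constant on $(0,+\infty)$ and the concavity argument degenerates. Here I would Taylor-expand at $0$: the implicit equations give $\phi_0(h) = 2\pi h + O(h^{2})$ and $\phi_k(h) = k^{2}\pi^{2} + 4\pi h + O(h^{2})$ with $\phi_k''(0) = -8/k^{2}$ for $k \geq 1$. So if $F(0) = 0$ and the smallest index is $0$ one obtains $F'(0) = -2\pi < 0$, while if all four indices are $\geq 1$ then $F'(0) = 0$ but
\begin{equation*}
F''(0) \;=\; 8\left[\frac{1}{(p')^{2}} + \frac{1}{(q')^{2}} - \frac{1}{p^{2}} - \frac{1}{q^{2}}\right] \;<\; 0,
\end{equation*}
the strict inequality coming from strict convexity of $x \mapsto 1/x$ applied to the majorization $(p^{2},q^{2}) \succ ((p')^{2},(q')^{2})$. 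Either way $F$ is strictly negative on a right neighbourhood of $0$, and combined with $F' < 0$ at every hypothetical positive zero this rules out a second zero by the standard sign-crossing argument. The main obstacle is really just this bookkeeping at $h = 0$; the substance of the proof is the clean concavity property of $G(\cdot;h)$ for $h > 0$.
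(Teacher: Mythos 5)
Your proof is correct and follows essentially the same route as the paper's: your identity $\phi_k'(h)=4\pi\phi_k/(\phi_k+c(h))$ is exactly the differential equation the paper derives (with its $a_k=(\phi_k+c(h))/2$), and your Schur-concavity argument at a crossing is the paper's computation $a_pa_q-a_{p'}a_{q'}=(a_{p'}-a_{q'})\epsilon-\epsilon^2<0$ in different clothing. Your explicit treatment of the endpoint $h=0$ (where $c(0)=0$ degenerates the concavity and one must pass to $F''(0)$) is a small welcome addition that the paper's proof, which works only on $(0,+\infty)$, does not spell out.
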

\paragraph{Proof}
Suppose that $\lambda_{p,q,h}(S) = \lambda_{p',q',h}(S)$.
Without loss of generality, suppose $p<p' \leq q'<q$.
Consider the variation of
$$
(0,+\infty)\ni h\mapsto \sigma(h):=\frac{1}{\pi^2}\left( \alpha_{p}(h)^2 + \alpha_{q} (h)^2 - \alpha_{p'}(h)^2
-\alpha_{q'}(h)^2\right).
$$

The zeros of $\sigma$ correspond to the values of $h$ for which the curves corresponding to $(p,q), (p',q')$
intersect. To analyse its variation, we note that
$$
\sigma'(h) = \frac{2}{\pi^2}\left( \alpha_{p}(h)\alpha'_{p}(h)  + \alpha_{q} (h)\alpha'_{q}(h) - \alpha_{p'}(h)\alpha'_{p'}(h) -\alpha_{q'}(h)\alpha'_{q'}(h) \right)\,.
$$
Now, we deduce from \eqref{eq:alphaneven} and \eqref{eq:alphanodd}, that $h\mapsto \alpha_k(h)$ satisfies the differential equation
\begin{equation}
\frac{\alpha'_k}{\alpha_k} \left( h\pi + \frac{\alpha_k^2}{2} + \frac{h^2 \pi^2}{2} \right)=\pi\, ,
\end{equation}
which implies
\begin{equation}
\alpha'_k \alpha_k \left( h\pi + \frac{\alpha_k^2}{2} + \frac{h^2 \pi^2}{2} \right)=\pi  \alpha_k^2\, .
\end{equation}
We introduce for $h > 0$ and $k\in \mathbb N$,
$$
a_k (h) =  h\pi + \frac{\alpha_k^2}{2} + \frac{h^2 \pi^2}{2} > 0\, .
$$
We deduce
$$
\sigma'(h) =  \frac{2}{\pi} \left( \frac{\alpha_{p}^2}{ a_{p}}+ \frac{\alpha_{q}^2}{ a_{q}} - \frac{\alpha_{p'}^2}{ a_{p'}} - \frac{\alpha_{q'}^2}{ a_{q'}} \right)
=  - \frac{4}{\pi} \left(h\pi +   \frac{h^2 \pi^2}{2} \right)\left( \frac{1}{a_{p}} + \frac{1}{a_{q}}  -  \frac{1}{a_{p'}} - \frac{1}{a_{q'}} \right) \,.
$$
We now assume that $\sigma (h)=0$, which implies $$ a_{p} + a_{q} = a_{p'} + a_{q'}\,.$$
This gives
$$
\sigma'(h) =  - \frac{4}{\pi} \left(h\pi +   \frac{h^2 \pi^2}{2} \right)\left ( \frac{(a_{p}+a_{q})(a_{p'}a_{q'}-a_{p}a_{q})}{(a_{p}a_{q}a_{p'}a_{q'})}\right)\,.
$$
So the sign of $\sigma'(h)$ is the sign of $ a_{p}a_{q}-a_{p'}a_{q'}$.
For $\epsilon > 0$, we can now write $a_{p}=a_{p'} -\epsilon$ and $a_{q}=a_{q'} +\epsilon$, and compute
$$
a_{p}a_{q}-a_{p'}a_{q'}= (a_{p'}-\epsilon)(a_{q'}+\epsilon) - a_{p'} a_{q'}= (a_{p'}-a_{q'}) \epsilon -\epsilon^2 < 0\,.
$$
Since the derivative of $\sigma$ has constant sign, there can be at most one point of intersection.

\begin{rem}\label{rem:crossing}
  The proof of Proposition~\ref{p:crossing} shows that if $p < p' \leq q' <q$ and $\lambda_{p,q, h^*}=\lambda_{p',q',h^*}$ for some $h^*\geq 0$,
  then the map  $$h \mapsto \pi^{-2}( \alpha_{p'}(h)^2 + \alpha_{q'} (h)^2 - \alpha_{p}(h)^2 -\alpha_{q}(h)^2)$$ is increasing
  for $h>h^*$. Hence the curve $\pi^{-2}( \alpha_{p}(h)^2 + \alpha_{q} (h)^2)$ is below the curve
  $\pi^{-2}( \alpha_{p'}(h)^2 + \alpha_{q'} (h)^2)$ for $h>h^*$.
\end{rem}

\subsection{The eigenvalue $\lambda_{9,h}(S)$.}\label{ssec:9CS}
The ninth eigenvalue of the Neumann Laplacian  for the square is Courant-sharp, \cite{HPS1}, and corresponds to the eigenvalue  $2^2+2^2=8$. This eigenvalue is simple and corresponds to the labelling $(2,2)$. The eigenfunction  reads $$ \Phi_{0,\theta,2,2}(x,y)= \cos 2x\, \cos 2y\,,\mbox{  for } (x,y) \in (-\frac{\pi}{2},\frac{\pi}{2})^2\,. $$
 It is easy to see that the Courant-sharp property is still true for $h$ small enough.
 By deformation, the eigenfunction is
 $$
 \Phi_{h,\theta,2,2}(x,y)= \cos ( \alpha_{2}(h)x/\pi)\, \cos(\alpha_{2}(h) y/\pi),$$
  with corresponding eigenvalue $\frac{2}{\pi^2} (\alpha_{2}(h))^2$. The nodal structure is given by
  $$
   \frac{\alpha_{2}(h)x}{\pi} =- \frac \pi 2\,,\,  \frac{\alpha_{2}(h)x}{\pi}  = \frac{\pi}{2}\,, \frac{\alpha_{2}(h)y}{\pi} = - \frac \pi 2\,,\,  \frac{\alpha_{2}(h)y}{\pi}  = \frac{\pi}{2}\,,
   $$
   hence for this eigenfunction and for $h\in [0,+\infty)$,  there are
   nine nodal domains as long as $\frac{2}{\pi^2} ( \alpha_{2}(h))^2$ is the ninth eigenvalue.\\

   The issue is to follow its labelling and we observe that when $h=+\infty$ the eigenvalue is $18$ and, according to the ordered list of the Dirichlet eigenvalues, has minimal labelling $11$ (see Appendix~\ref{sA}). Because $9 < 11$, this eigenfunction is NOT Courant-sharp  for $h$ sufficiently large.\\

  On the other hand the eigenvalue  $\frac{1}{\pi^2} (\alpha_0(h)^2 + \alpha_3 (h)^2)$ which has minimal  labelling  $10$  for $h=0$  arrives with labelling $9$ at $h=+\infty$. Hence some transition occurs for at least one $h_9^* >0$ which satisfies
 $$
   \alpha_0(h)^2 + \alpha_3 (h)^2 = 2 \alpha_2(h)^2\,.
 $$
By Proposition~\ref{p:crossing}, there is at most one point of intersection between the curves corresponding to $(2,2)$ and $(3,3)$.

 We recall that $\alpha_0(0)=0\,, \,\alpha_1(0) =\pi\,,\, \alpha_2(0) =2\pi \,, \,\alpha_3(0)=3\pi$ and that $\alpha_0(+\infty)=\pi\,, \,\alpha_1(+\infty) =2 \pi\,, \,\alpha_2(+\infty) =3\pi \,, \, \alpha_3(+\infty)=4\pi\,,$ so
 $\alpha_0(h)^2 + \alpha_3 (h)^2 $ is increasing from $9 \pi^2$ to $17 \pi^2$ when  $2 \alpha_2(h)^2$ goes from $8 \pi^2$ to $18 \pi^2$.\\

 In order to show that the curves corresponding to the pairs $(2,2), (3,0)$ do not intersect the curves
 corresponding to the other pairs, we consider the table in Appendix~\ref{sA}.

 From above, we see that the eigenvalues corresponding to the pairs $(3,3)$, $(4,2)$, $(2,4)$ and so on are all larger
 than or equal to $18$.
 So we need to consider the eigenvalues corresponding to the pairs $(3,1)$, $(3,2)$, $(4,0)$, $(4,1)$ and show that they
 do not correspond to the ninth, tenth or eleventh eigenvalues for any $h>0$.
 Numerically we find that,
 \begin{align*}
 \lambda_{3,1,h}(S) &= \lambda_{1,3,h}(S) \geq 18 \text{ for } h > 11.4225, \\
 \lambda_{3,2,h}(S) &= \lambda_{2,3,h}(S) \geq 18 \text{ for } h > 2.6288, \\
 \lambda_{4,0,h}(S) &= \lambda_{0,4,h}(S) \geq 18 \text{ for } h > 1.2668, \\
 \lambda_{4,1,h}(S) &= \lambda_{1,4,h}(S) \geq 18 \text{ for } h > 0.4208.
 \end{align*}
 So we are left to consider $h \leq 11.4225$.

 From below, we see that the eigenvalues corresponding to the pairs $(0,0)$, $(1,0)$, $(0,1)$, $(1,1)$ are smaller
 than or equal to $8$ for all $0<h<+\infty$.
 So we need to consider the eigenvalues corresponding to the pairs $(2,0)$, $(2,1)$ and show that they
 do not correspond to the ninth, tenth or eleventh eigenvalues for any $h>0$.
 Numerically we find that,
 \begin{align*}
 &\lambda_{2,2,h}(S) \geq 13 \text{ for } h > 2.9804\,, \\
 \lambda_{3,0,h}(S) = &\lambda_{0,3,h}(S) \geq 13 \text{ for } h > 3.5468\,.
 \end{align*}
 So we are left to consider $h \leq 3.5468 < 11.4225\,$.

 With the table from Appendix~\ref{sA} in mind, 
 we now plot the Robin eigenvalues of the square $(\alpha_m(h)^2 + \alpha_n(h)^2)/\pi^2$ for $h\leq 12$ corresponding to the pairs $(0,0)$, $(1,0)$,  $(1,1)$, $(2,0)$, $(2,1)$, $(2,2)$, $(3,0)$, $(3,1)$, $(3,2)$, $(4,0)$, $(4,1)$.

 \begin{figure}[!h]
 \begin{center}
\includegraphics[width=10cm]{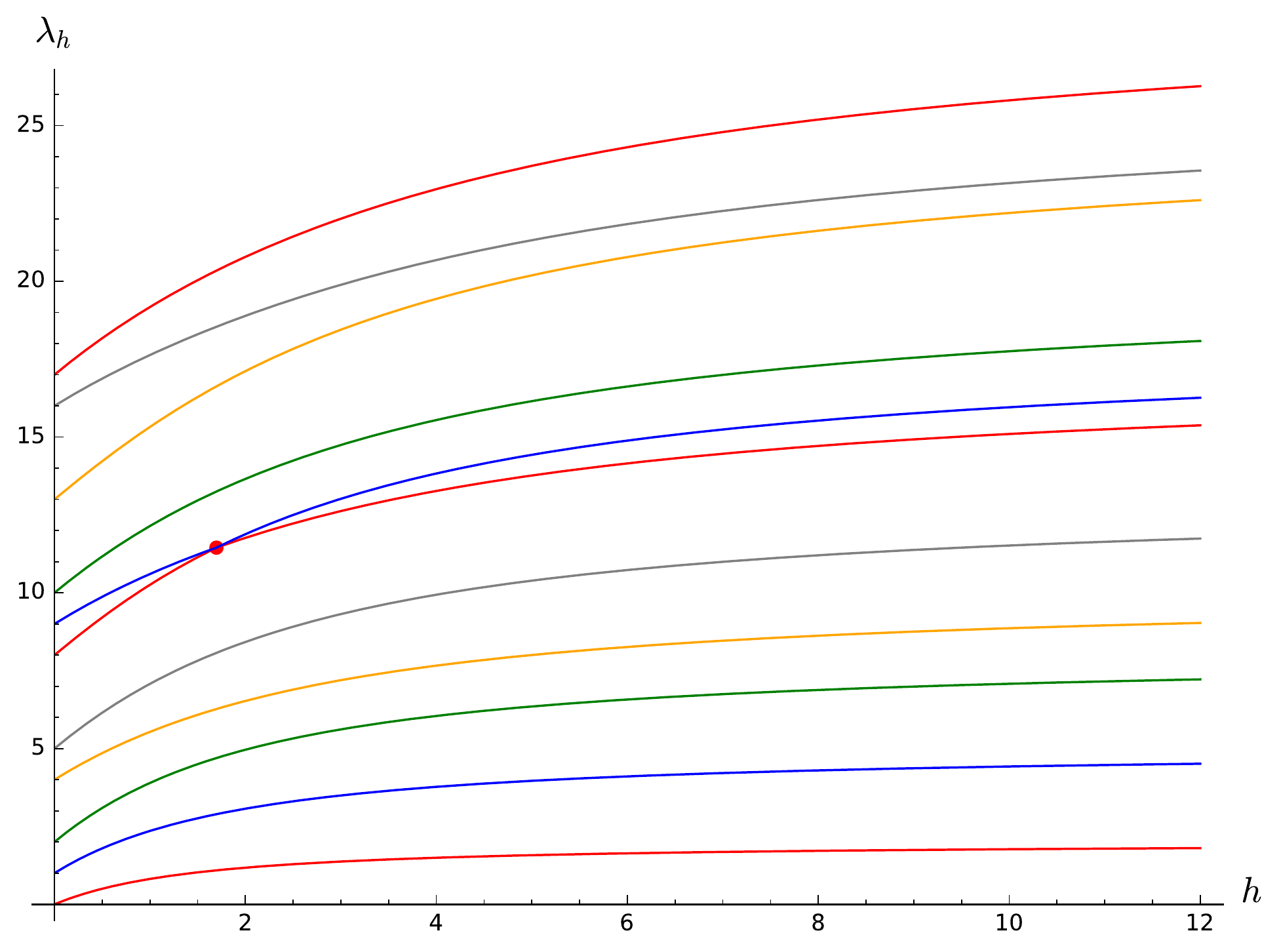}
 \caption{The Robin eigenvalues of the square $(\alpha_m(h)^2 + \alpha_n(h)^2)/\pi^2$ for $h\leq 12$ corresponding to the pairs $(0,0)$, $(1,0)$,  $(1,1)$, $(2,0)$, $(2,1)$, $(2,2)$, $(3,0)$, $(3,1)$, $(3,2)$, $(4,0)$, $(4,1)$.
  The intersection between the curves corresponding to $(2,2)$ and $(3,0)$ occurs at $(1.6970,11.4498)$.}
 \label{fig:18evals}
  \end{center}
 \end{figure}

 From Figure~\ref{fig:18evals}, we see that for $h \leq 12$ the curves corresponding to the pairs $(2,2)$, $(3,0)$ do not intersect the curves corresponding to the other pairs.  By Proposition~\ref{p:crossing}, the curves corresponding to $(2,2)$ and $(3,0)$ intersect for a unique value of $h = h_9^*>0\,$.

 Since $u_{2,2}(x,y)$ is an eigenfunction corresponding to $\lambda_{9,h_9^*}(S)$ that has
 9 nodal domains, we have proved:

\begin{prop}
There exists $h_9^* >0$ such that $\lambda_{9,h}$ is Courant-sharp for $0\leq h \leq h_9^*$ and not Courant-sharp for $h>  h_9^*\,$.
\end{prop}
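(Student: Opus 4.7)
The plan is to combine the general crossing result (Proposition~\ref{p:crossing}) with the elementary nodal analysis of the explicit eigenfunction $u_{2,2,h}$ and the parity argument from Remark~\ref{remsym}/Lemma~\ref{lem4.1}.

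First I would identify the two curves whose intersection will define $h_9^*$. At $h=0$, the Neumann ninth eigenvalue equals $8=\alpha_2(0)^2/\pi^2+\alpha_2(0)^2/\pi^2$ and corresponds to the pair $(2,2)$, whose associated eigenfunction $\cos(2x)\cos(2y)$ has exactly $9$ nodal rectangles, so $\lambda_{9,0}(S)$ is Courant-sharp. At $h=+\infty$, by the Dirichlet table of Appendix~\ref{sA}, the value $2\alpha_2(+\infty)^2/\pi^2=18$ corresponds to $(2,2)$ with minimal labelling $11$, while the ninth Dirichlet eigenvalue corresponds instead to $(0,3)$ (equivalently $(3,0)$, by the $x\leftrightarrow y$ symmetry). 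Hence the minimal labelling of the smooth curve $h\mapsto 2\alpha_2(h)^2/\pi^2$ must jump from $9$ to at least $10$ at some $h\in(0,+\infty)$.

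Next I would show that there is exactly one such jump and that it is caused solely by the curve $(0,3)$. Applying Proposition~\ref{p:crossing} with $(p,q)=(2,2)$ and $(p',q')=(0,3)$ (or $(3,0)$) gives at most one crossing $h_9^*$, and Remark~\ref{rem:crossing} ensures that for $h>h_9^*$ the curve $(0,3)$ lies strictly below the curve $(2,2)$. To exclude interference from any other pair $(p',q')$ with $(p',q')\neq(2,2),(0,3)$, I would use the enumeration carried out just before the statement: for all pairs with total Dirichlet eigenvalue $\le 18$ (namely $(3,1),(3,2),(4,0),(4,1),(2,0),(2,1)$ and the trivial pairs below $8$), the numerical bounds listed in the text and displayed in Figure~\ref{fig:18evals} show that for $h\le 12$ none of the corresponding curves lie in the range spanned by $(2,2)$ and $(0,3)$ in the relevant interval; combined with Proposition~\ref{p:crossing}, this confirms that $(2,2)$ and $(0,3)$ exchange labellings $9$ and $10$ at a unique value $h_9^*$ and no other relabelling occurs.

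Finally I would translate this into the Courant-sharp dichotomy. For $0\le h\le h_9^*$, the ninth eigenvalue is $\lambda_{9,h}(S)=2\alpha_2(h)^2/\pi^2$, realized by the product eigenfunction $u_{2,h}(x)u_{2,h}(y)$; by the formulas of Subsection~\ref{ss:2.0}, its nodal set is the union of two horizontal and two vertical lines (where $\cos(\alpha_2 x/\pi-\alpha_2/2)$ and its $y$-analogue vanish), partitioning $S$ into exactly $9$ rectangles. Hence $\lambda_{9,h}$ is Courant-sharp. For $h>h_9^*$, $\lambda_{9,h}(S)=(\alpha_0(h)^2+\alpha_3(h)^2)/\pi^2$, whose eigenspace is spanned by $u_{0,h}(x)u_{3,h}(y)$ and $u_{3,h}(x)u_{0,h}(y)$; since $0+3$ is odd, Lemma~\ref{lem4.1} gives that any associated eigenfunction has an even number of nodal domains, hence at most $8$, so $\lambda_{9,h}$ is not Courant-sharp.

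The main obstacle is the uniqueness step: verifying that no third curve interferes over the relevant range. This is essentially a finite check, but it requires both the monotonicity consequence of Proposition~\ref{p:crossing} and the explicit numerical thresholds tabulated above; the figure is used as a visual confirmation rather than a proof, so the rigorous content rests on the pairwise crossing lemma applied systematically to the finite list of candidate pairs.
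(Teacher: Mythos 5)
Your proposal is correct and follows essentially the same route as the paper: identify the crossing of the $(2,2)$ and $(3,0)$ curves via Proposition~\ref{p:crossing}, exclude interference from other pairs by the same finite numerical check and Figure~\ref{fig:18evals}, count the nine nodal rectangles of $u_{2,2}$ for $h\leq h_9^*$ (including at $h_9^*$ itself, where $u_{2,2}$ still lies in the eigenspace), and apply the parity argument of Lemma~\ref{lem4.1} to the $(3,0)$, $(0,3)$ eigenspace for $h>h_9^*$. The only cosmetic discrepancy is your description of the candidate pairs as those ``with total Dirichlet eigenvalue $\le 18$'' (e.g.\ $(3,1)$ has Dirichlet value $20$); the relevant criterion, as in the paper, is that the Neumann value lies below $18$ while the Dirichlet value lies above it, but the list you check is the correct one.
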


By the bisection method, we compute $h_9^*$ numerically and find that \break $h_9^* \sim 1.6967$.

By the above, $\lambda_{9,h}$ is given by the pair $(2,2)$ for $h \leq h_9^*$ and the pair $(3,0)$
for $h > h_9^*$. Also, $\lambda_{10,h}$ is given by the pair $(0,3)$ and $\lambda_{11,h}$
is given by the pair $(3,0)$ for $h \leq h_9^*$ and the pair $(2,2)$ for $h > h_9^*\,$.

This shows that whether the eigenfunction corresponding to a Robin eigenvalue of the square
is an odd function or an even function depends on $h$ (in the case where there are crossings).

For example, for $\lambda_{9,h}$ with $h \leq h_9^*$, we have that
$u_{2,2}(-x,-y) = u_{2,2}(x,y)$. On the other hand, for $h > h_9^*$,
$$ u_{3,0}(-x,-y) = -u_{3,0}(x,y) \mbox{  and } u_{0,3}(-x,-y) = -u_{3,0}(x,y)\,.$$
So any linear combination of $u_{3,0}(x,y)$ and $u_{0,3}(x,y)$ is antisymmetric with respect to the
transformation $(x,y) \mapsto (-x,-y)\,$.
Hence $\lambda_{9,h}$ is not Courant-sharp for $h > h_9^*$ (via Lemma \ref{lem4.1}).

For $h = h_9^*$, any eigenfunction corresponding to $\lambda_{9,h_9^*}(S)$ is a linear combination
of $u_{2,2}(x,y), u_{3,0}(x,y)$ and $u_{0,3}(x,y)$, so in general it is neither symmetric nor antisymmetric
with respect to the transformation $ (x,y) \mapsto (-x,-y)$.

\subsection{The eigenvalue $\lambda_{25,h}(S)$.}\label{ss5.3}
Similarly  there are crossings between $\lambda_{25,h}$ and $\lambda_{27,h}$.
As for the ninth eigenvalue, we first show that the curves corresponding to the pairs $(4,3)$, $(5,1)$ do not intersect the curves corresponding to the other pairs by considering the table in Appendix~\ref{sA}.

 From above, we see that the eigenvalues corresponding to the pairs $(5,4)$, $(6,3)$, $(7,0)$ and so on are all larger
 than or equal to $41$.
 So we need to consider the eigenvalues corresponding to the pairs $(5,2)$, $(4,4)$, $(5,3)$, $(6,0)$, $(6,1)$, $(6,2)$ and show that they do not correspond to $\lambda_{25,h}(S), \lambda_{26,h}(S), \lambda_{27,h}(S), \lambda_{28,h}(S)$ for any $h>0$.
 Numerically we find that,
 \begin{align*}
 \lambda_{5,2,h}(S) &= \lambda_{2,5,h}(S) \geq 41 \text{ for } h > 12.6664, \\
 \lambda_{4,4,h}(S) &\geq 41 \text{ for } h > 4.9398, \\
 \lambda_{5,3,h}(S) &= \lambda_{3,5,h}(S) \geq 41 \text{ for } h > 3.4557, \\
 \lambda_{6,0,h}(S) &= \lambda_{0,6,h}(S) \geq 41 \text{ for } h > 3.8230, \\
 \lambda_{6,1,h}(S) &= \lambda_{1,6,h}(S) \geq 41 \text{ for } h > 2.0624, \\
 \lambda_{6,2,h}(S) &= \lambda_{2,6,h}(S) \geq 41 \text{ for } h > 0.4016.
 \end{align*}
 So we are left to consider $h \leq 12.6664$.

 The eigenvalues corresponding to the pairs $(3,2)$, $(3,1)$ and below in the table in Appendix~\ref{sA} are smaller
 than or equal to $25$ for all $0<h<\infty$.
 So we need to consider the eigenvalues corresponding to the pairs $(4,0)$, $(4,1)$, $(3,3)$, $(4,2)$, $(5,0)$
 and show that they do not correspond to $\lambda_{25,h}(S), \lambda_{26,h}(S), \lambda_{27,h}(S), \lambda_{28,h}(S)$
 for any $h>0$.
 Numerically we find that,
 \begin{align*}
 \lambda_{4,3,h}(S) &= \lambda_{3,4,h}(S) \geq  37 \text{ for } h > 11.5497\,, \\
 \lambda_{5,1,h}(S) &= \lambda_{1,5,h}(S) \geq  37 \text{ for } h > 15.3826\,.
 \end{align*}
 So we are left to consider $h \leq  12.6664 < 15.3826\,$.

 With the table from Appendix~\ref{sA} in  mind, 
 we now plot the Robin eigenvalues of the square $(\alpha_m(h)^2 + \alpha_n(h)^2)/\pi^2$ for  $h\leq 16$  corresponding to the pairs $(4,0)$, $(4,1)$, $(3,3)$, $(4,2)$,  $(5,0)$, $(5,1)$, $(4,3)$, $(5,2)$, $(4,4)$, $(5,3)$, $(6,0)$, $(6,1)$, $(6,2)$.

 \begin{figure}[!h]
 \begin{center}
\includegraphics[width=10cm]{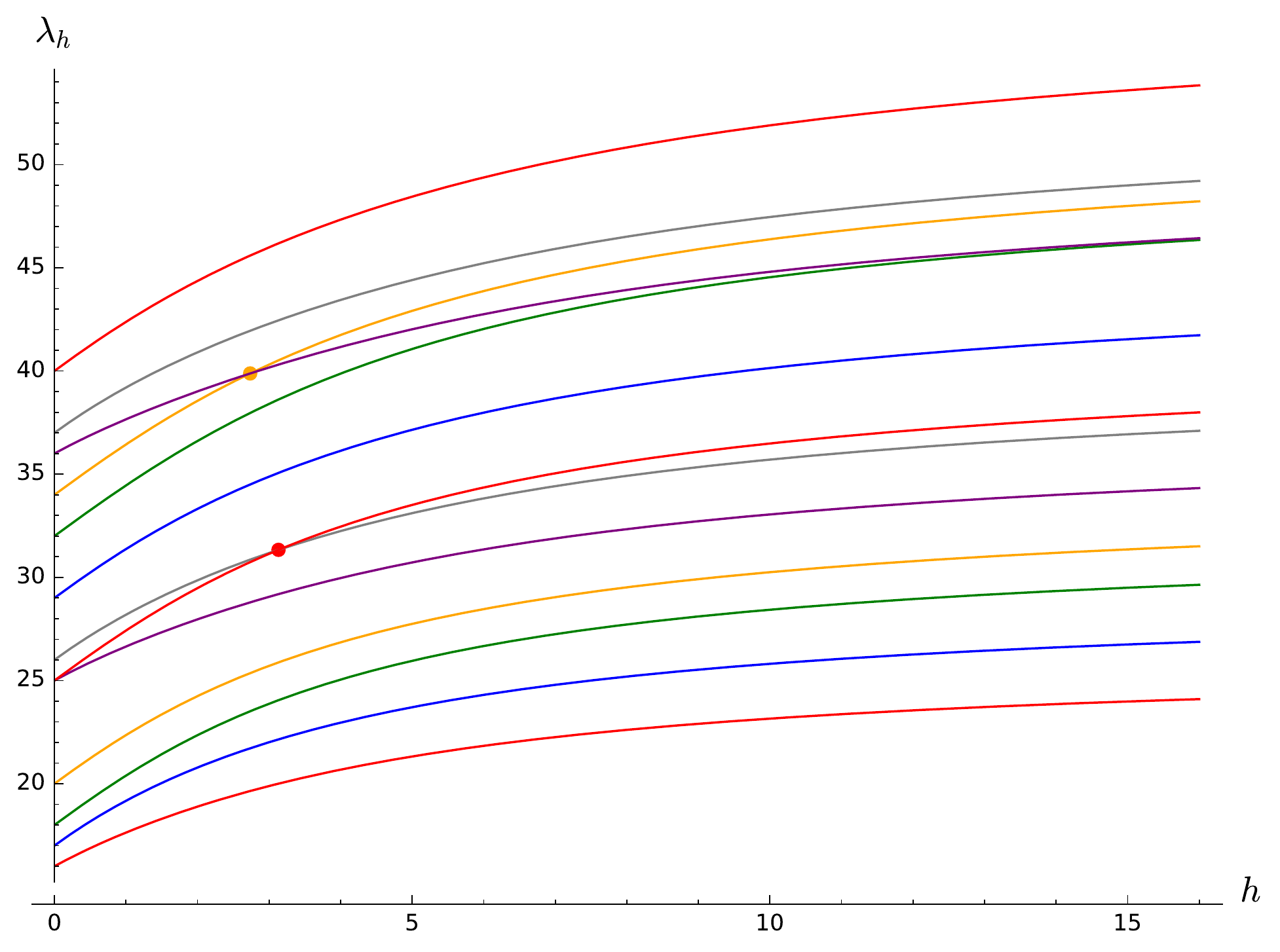}
 \caption{The Robin eigenvalues of the square $(\alpha_m(h)^2 + \alpha_n(h)^2)/\pi^2$ for  $h\leq 16$  corresponding to the pairs $(4,0)$, $(4,1)$, $(3,3)$, $(4,2)$,  $(5,0)$, $(5,1)$, $(4,3)$, $(5,2)$, $(4,4)$, $(5,3)$, $(6,0)$, $(6,1)$, $(6,2)$.}
 \label{fig:lambda25}
  \end{center}
 \end{figure}

 From Figure~\ref{fig:lambda25}, we see that for  $h \leq 16$  the curves corresponding to the pairs $(4,3)$, $(5,1)$ do not intersect the curves corresponding to the other pairs.  We also note that the curves corresponding to $(4,4)$
 and $(6,0)$ give rise to the same Dirichlet eigenvalue as $h \to \infty$ (see the table in Appendix~\ref{sA}).
 There are only two crossings in Figure~\ref{fig:lambda25}.

By Proposition~\ref{p:crossing}, there exists a unique value $h = h_{25}^*$ at which the crossing occurs.
So $\lambda_{25,h}$ is given by the pair $(4,3)$ for $h \leq h_{25}^*$ and by the pair $(5,1)$ for $h > h_{25}^*$.

Hence, we have obtained:
\begin{prop}
There exists $h_{25}^* >0$ such that  $\lambda_{25,h}$ is given by the pair $(4,3)$
for $h \leq h_{25}^*$ and by the pair $(5,1)$ for $h > h_{25}^*$.
\end{prop}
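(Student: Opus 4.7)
The plan mirrors the treatment of $\lambda_{9,h}$ in Subsection~\ref{ssec:9CS}. The central tool is Proposition~\ref{p:crossing}, which ensures that any two eigenvalue curves $h\mapsto \pi^{-2}(\alpha_p(h)^2+\alpha_q(h)^2)$ indexed by distinct pairs can meet at most once on $(0,+\infty)$.

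First I would compare the labels at the two endpoints. At $h=0$ the pair $(4,3)$ gives the Neumann value $4^2+3^2=25$ while $(5,1)$ gives $5^2+1^2=26$, so the $(4,3)$-curve lies strictly below the $(5,1)$-curve. At $h=+\infty$ the Dirichlet eigenvalues are $(p+1)^2+(q+1)^2$, which yields $41$ for $(4,3)$ and $40$ for $(5,1)$, so the ordering is reversed. Continuity of each curve in $h$ (from \eqref{eq:4.10} and the analogous continuity at $h=0$) forces at least one crossing, and Proposition~\ref{p:crossing} upgrades this to exactly one crossing $h_{25}^*\in(0,+\infty)$; moreover, by Remark~\ref{rem:crossing}, the $(4,3)$-curve is below the $(5,1)$-curve for $h<h_{25}^*$ and above it for $h>h_{25}^*$.

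Next I would verify that throughout $(0,+\infty)$ one of the two pairs $(4,3)$, $(5,1)$ does occupy position $25$ in the increasing enumeration. The strategy is the one rehearsed in the paragraphs preceding the proposition: enumerate all pairs whose Neumann value $p^2+q^2$ is at most roughly $25$, or whose Dirichlet value $(p+1)^2+(q+1)^2$ is near $40$--$41$. By monotonicity in $h$ these are the only candidates whose curves could enter the relevant window. For each such competitor one checks, using the transcendental equations \eqref{eq:alphaneven}--\eqref{eq:alphanodd}, that its curve lies strictly above or strictly below the window on the relevant $h$-interval; the numerical thresholds tabulated in the excerpt (for $(5,2)$, $(4,4)$, $(5,3)$, $(6,0)$, $(6,1)$, $(6,2)$ from above and $(4,0)$, $(4,1)$, $(3,3)$, $(4,2)$, $(5,0)$ from below) cover exactly this bookkeeping and are visualised in Figure~\ref{fig:lambda25}.

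The main obstacle is this combinatorial verification: one must list \emph{all} pairs whose curves could plausibly enter the strip around $\lambda_{25,h}$ for some $h$ and eliminate them by solving finitely many transcendental inequalities. Once that checking is complete, Proposition~\ref{p:crossing} applied to the two surviving curves $(4,3)$ and $(5,1)$ produces the unique crossing value $h_{25}^*>0$, and the reversal of orderings at the two endpoints dictates that $\lambda_{25,h}$ is labelled by $(4,3)$ for $h\leq h_{25}^*$ and by $(5,1)$ for $h>h_{25}^*$.
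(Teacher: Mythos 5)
Your proposal is correct and follows essentially the same route as the paper: the endpoint comparison (Neumann values $25$ vs.\ $26$, Dirichlet values $41$ vs.\ $40$) forces at least one crossing, Proposition~\ref{p:crossing} gives uniqueness of $h_{25}^*$, and the remaining work is the same numerically-assisted elimination of competitor pairs that the paper carries out before stating the proposition.
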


 By the bisection method, we compute $h_{25}^*$ numerically and find that \break $h_{25}^* \sim 3.1317$.

We note that $u_{4,3}(x,y)$ is antisymmetric with respect to the transformation $ (x,y) \mapsto (-x,-y)$,
 while $u_{5,1}(x,y)$ is symmetric with respect to this transformation. From the first observation,	Lemma \ref{lem4.1} gives that any  eigenvalue with corresponding eigenfunction a linear combination of $u_{4,3}(x,y)$ and
 $u_{3,4}(x,y)$ has an even number of nodal domains.
 Hence $\lambda_{25,h}$ is not Courant-sharp for $h < h_{25}^*$
 and $\lambda_{27,h}$ is not Courant-sharp for $h > h_{25}^*$.

For $h_{25}^* < h < \infty$ we investigate whether $\lambda_{25,h}(S)$ is Courant-sharp or not
by considering the corresponding eigenfunctions $u_{5,1}(x,y)$ and $u_{1,5}(x,y)$.\\

  For $(x,y) \in (-\frac{\pi}{2},\frac{\pi}{2})^2$, we consider the function
 \begin{align}
 \Phi_{h,\theta,5,1}(x,y)&=\cos\theta \, \sin(\alpha_5(h)x/\pi)\sin(\alpha_1(h)y/\pi) \notag\\
& \ \ \ \ \ \  + \sin \theta \, \sin(\alpha_1(h)x/\pi) \sin(\alpha_5(h)y/\pi)\,.
 \end{align}
\noindent
We also note that the lines $x=0$ and $y=0$ belong to the nodal set of $\Phi_{h,\theta, 5,1}$ for any $\theta$.

 It is known that $\lambda_{25,\infty}(S)$ is not Courant-sharp, \cite{BH1}.
 In addition, by Theorem~\ref{thm:hlarge}, we know that, for $h$ sufficiently large, $\lambda_{25,h}(S)$ is not Courant-sharp.

 In Figure~\ref{fig:25DirCrit}, we plot the twenty-fifth Dirichlet eigenfunction
 $$\Phi_{+\infty,\theta,5,1}(x,y) = \cos \theta \sin(6x)\sin(2y) + \sin \theta \sin(2x)\sin(6y)$$
for $\theta = 0$, $\arctan(1/3)$, $\frac{\pi}{4}$, $\arctan(3)$, $\frac{\pi}{2}$, $\frac{5\pi}{8}$, $\frac{3\pi}{4}$, $\frac{13\pi}{16}$, $\frac{15\pi}{16}$.\\

 \begin{figure}[!h]
 \begin{center}
\includegraphics[width=10cm]{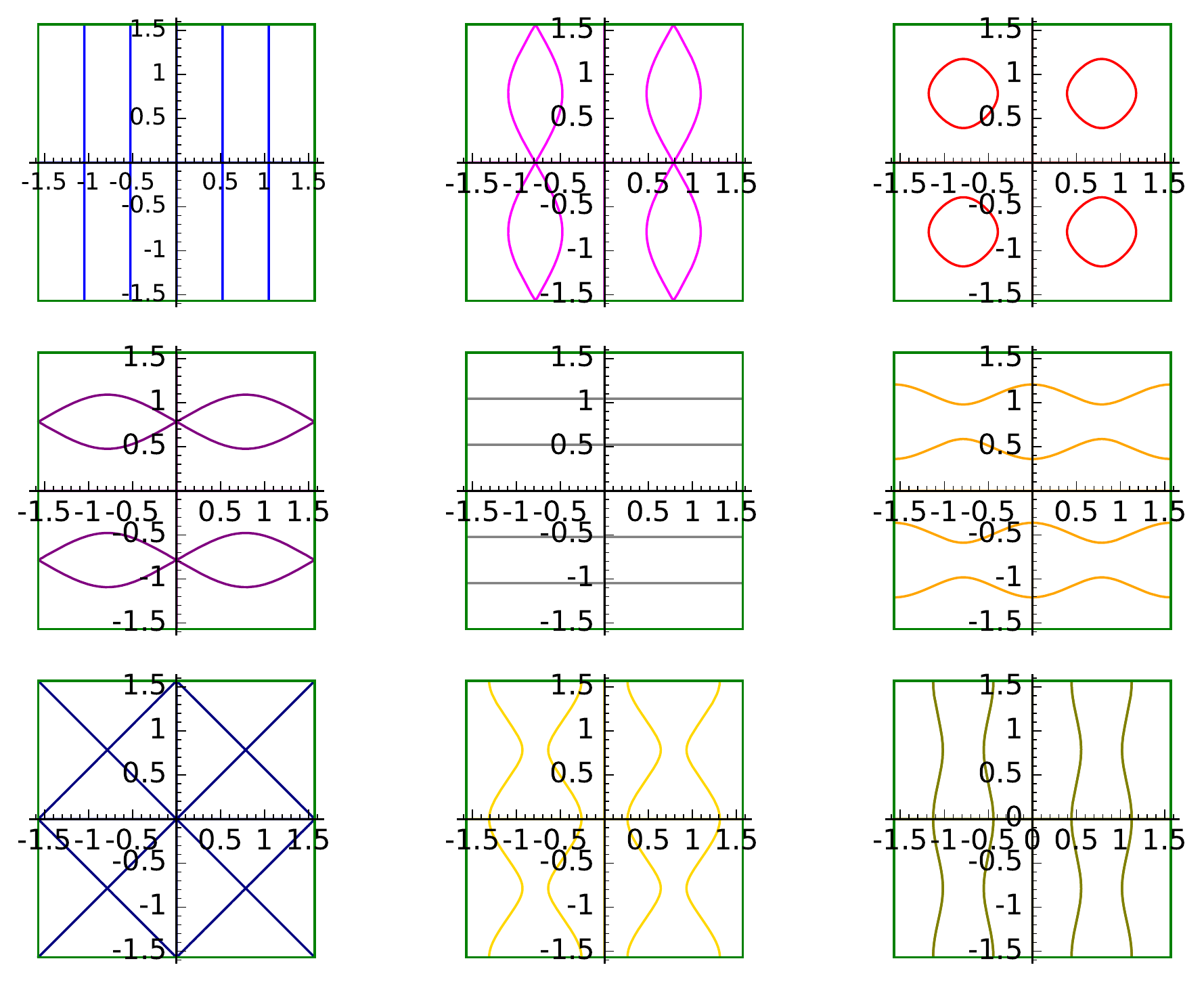}
 \caption{The  twenty-fifth Dirichlet eigenfunction $\Phi_{+\infty,\theta,5,1}$ for  $\theta = 0$, $\arctan(1/3)$, $\frac{\pi}{4}$, $\arctan(3)$, $\frac{\pi}{2}$, $\frac{5\pi}{8}$, $\frac{3\pi}{4}$, $\frac{13\pi}{16}$, $\frac{15\pi}{16}$ (blue, magenta, red, purple, grey orange, navy, gold, olive respectively).}
 \label{fig:25DirCrit}
  \end{center}
 \end{figure}

 We observe that for $\theta= \arctan(1/3)$ there are double points at $(0,0)$, $(\pm\frac{\pi}4, \frac{\pi}2)$,
 and $(\pm\frac{\pi}4,-\frac{\pi}2)$. There are triple points at $(\pm\frac{\pi}{4},0)$.
 Similarly for $\theta=\arctan(3)$, there are double points at $(0,0)$, $(\pm\frac{\pi}2, \frac{\pi}4)$,
 and $(\pm\frac{\pi}2,-\frac{\pi}4)$, and triple points at $(0,\pm\frac{\pi}4)$.

 The eigenfunction associated with the fifth Dirichlet eigenvalue on $(-\frac{\pi}2,\frac{\pi}2)^2$ is
 $ \tilde{\Phi}_{+\infty,\theta,2,0}(x,y) = \cos \theta \sin(3x)\sin(y) + \sin \theta \sin(x)\sin(3y)$.
 We see that
 $$ \tilde{\Phi}_{+\infty,\theta,5,1}(x,y) := \Phi_{+\infty,\theta,5,1}\left(\frac{x}2 -\frac{\pi}4,\frac{y}2 -\frac{\pi}4\right) = {\Phi}_{+\infty,\theta,2,0}(x,y).$$
 As in the proof of Lemma 4.2 of \cite{HPS1}, $ \Phi_{+\infty,\theta,5,1} $ can be constructed by taking
 its values in the square $(0,\frac{\pi}2)^2$ and folding evenly over $(-\frac{\pi}2,\frac{\pi}2)^2$,
 that is with respect to the axes $x=0$ and $y=0$. Compare Figure~\ref{fig:hDircom} with Figure~\ref{fig:25DirCrit}.

We now consider the case where $h>0$. In order to make a numerical comparison to the Dirichlet case, we choose $h=20$ in what follows. This value of $h$ is small enough that we see some differences compared to the Dirichlet case and large enough that we keep the asymptotic structure.\\

To determine the critical points on the side $y=\frac{\pi}{2}$, consider the function
\begin{equation*}
\psi(x,\theta):= \tilde{\Phi}_{h,\theta,5,1}(x,\pi /2) = \cos \theta \sin \left(\frac{\alpha_5 x}{\pi}\right) \sin \left(\frac{\alpha_1}{2}\right)
+ \sin \theta \sin \left(\frac{\alpha_1 x}{\pi}\right) \sin \left(\frac{\alpha_5}{2}\right).
\end{equation*}
\noindent
We have that $\psi(x,\theta) = 0$ gives
\begin{equation}\label{ec1}
\tan \theta = -\frac{\sin \left(\frac{\alpha_5 x}{\pi}\right)\sin \left(\frac{\alpha_1}{2}\right)}
{\sin \left(\frac{\alpha_1 x}{\pi}\right) \sin \left(\frac{\alpha_5}{2}\right)}.
\end{equation}
\noindent
In addition, $\frac{\partial \psi}{\partial x}(x,\theta) = 0$ gives
\begin{equation}\label{ec2}
\tan \theta = - \frac{\alpha_5 \cos \left(\frac{\alpha_5 x}{\pi}\right)\sin \left(\frac{\alpha_1}{2}\right)}
{\alpha_1 \cos \left(\frac{\alpha_1 x}{\pi}\right) \sin \left(\frac{\alpha_5}{2}\right)}.
\end{equation}
\noindent
Equating \eqref{ec1} and \eqref{ec2} gives that
\begin{equation}\label{ec3}
\alpha_5 \cot \left(\frac{\alpha_5 x}{\pi}\right) = \alpha_1 \cot \left(\frac{\alpha_1 x}{\pi}\right).
\end{equation}
\noindent
Let $x_c(h)$ denote a solution of \eqref{ec3}. For $h=20$, we compute numerically that
$x_c(20) \approx 0.8096522$.
Define
\begin{equation}\label{thetam}
\theta_m(h) := \arctan\left(- \frac{\sin(\frac{\alpha_5 x_c}{\pi}) \sin(\frac{\alpha_1}{2})}
{\sin(\frac{\alpha_1 x_c}{\pi}) \sin(\frac{\alpha_5}{2})}\right).
\end{equation}
For $h=20$, we compute numerically that $\theta_m(20) \approx 0.3324691$.\\

To determine the critical points on $x=0$, consider the function
\begin{align*}
\varphi(x, y, \theta) := \tilde{\Phi}_{h,\theta,5,1}(x,y) &= \cos \theta \sin \left(\frac{\alpha_5 x}{\pi}\right) \sin \left(\frac{\alpha_1 y}{\pi}\right)\\
& \ \ \ \ \ \ \ \ + \sin \theta \sin \left(\frac{\alpha_1 x}{\pi}\right) \sin \left(\frac{\alpha_5 y}{\pi}\right).
\end{align*}
\noindent
Then $\frac{\partial \varphi}{\partial x}(0,y,\theta) = 0$ gives
\begin{equation}\label{ec4a}
\tan \theta = - \frac{\alpha_5 \sin \left(\frac{\alpha_1 y}{\pi}\right)}
{\alpha_1 \sin \left(\frac{\alpha_5 y}{\pi}\right)}.
\end{equation}
We note that $\varphi(x,y,\theta) = 0$ if and only if
\begin{equation*}
\tan \theta = - \frac{\sin \left(\frac{\alpha_5 x}{\pi}\right) \sin \left(\frac{\alpha_1 y}{\pi}\right)}
{\sin\left(\frac{\alpha_1 x}{\pi}\right) \sin \left(\frac{\alpha_5 y}{\pi}\right)}.
\end{equation*}
We also note that
\begin{equation*}
\lim_{x \rightarrow 0} - \frac{\sin \left(\frac{\alpha_5 x}{\pi}\right) \sin \left(\frac{\alpha_1 y}{\pi}\right)}
{\sin\left(\frac{\alpha_1 x}{\pi}\right) \sin \left(\frac{\alpha_5 y}{\pi}\right)}
= - \frac{\alpha_5 \sin \left(\frac{\alpha_1 y}{\pi}\right)}
{\alpha_1 \sin \left(\frac{\alpha_5 y}{\pi}\right)}
\end{equation*}
by l'H\^opital's rule.

In addition, $\frac{\partial^2 \varphi}{\partial y \partial x}(0,y,\theta) = 0$ gives
\begin{equation}\label{ec5a}
\tan \theta = - \frac{\cos\left(\frac{\alpha_1 y}{\pi}\right)}{\cos\left(\frac{\alpha_5 y}{\pi}\right)}.
\end{equation}
Equating \eqref{ec4a} and \eqref{ec5a} gives 
 equation \eqref{ec3}.
Define
\begin{equation}\label{thetat}
\theta_t := \arctan\left( - \frac{\alpha_5 \sin(\frac{\alpha_1 x_c}{\pi})}
{\alpha_1 \sin(\frac{\alpha_5 x_c}{\pi})}\right).
\end{equation}
For $h=20$, we compute numerically that $\theta_t \approx 1.2492655$.\\

Using \eqref{eq:alphanodd}, we obtain the following asymptotic expansions for $\alpha_1(h)$
and $\alpha_5(h)$ when $h \rightarrow \infty$.
\begin{align}
\alpha_1(h) &= 2\pi - \frac{4}{h} + \mathcal{O}\left(\frac{1}{h^2}\right), \label{alpha1asymp}\\
\alpha_5(h) &= 6\pi - \frac{12}{h} + \mathcal{O}\left(\frac{1}{h^2}\right). \label{alpha5asymp}
\end{align}
 Substituting these expansions into \eqref{ec3} and solving for $x_c(h)$ gives that
\begin{equation}\label{ec6}
x_c(h) = \frac \pi 4 + \frac {1}{2 h}  + \mathcal{O}\left(\frac{1}{h^2}\right).
\end{equation}

Using the above asymptotic expansions for $\alpha_1(h)$, $\alpha_5(h)$ and $x_c(h)$,
we obtain that as $h \rightarrow \infty$,
\begin{equation*}
\tan \theta_m(h) = \frac 13 +\mathcal{O}\left(\frac{1}{h}\right),
\end{equation*}
\begin{equation*}
\tan \theta_t(h) = 3  + \mathcal{O}\left(\frac{1}{h}\right),
\end{equation*}
and
\begin{equation*}
\tan \left(\frac \pi 2 -\theta_t(h)\right)=\cot \theta_t(h) = \frac 13  +  \mathcal{O}\left(\frac{1}{h}\right).
\end{equation*}
For $h=+\infty$, we have
$$\theta_m (+\infty)= \frac \pi 2 - \theta_t (+\infty)= \arctan\left(\frac13\right)\,,$$
and we deduce from above  that
\begin{equation*}
\theta_m(h) =  \arctan\left(\frac13\right)   + \mathcal{O}\left(\frac{1}{h}\right),
\end{equation*}
and
\begin{equation*}
\frac \pi 2 -\theta_t(h) =  \arctan\left(\frac13\right)  +  \mathcal{O}\left(\frac{1}{h}\right).
\end{equation*}
At this stage we do not get  any information  about the sign of $\theta_m + \theta_t - \frac \pi 2$.
For this, we observe that by \eqref{thetam} and \eqref{thetat},
\begin{equation*}
\frac{\tan \theta_m}{\tan (\frac{\pi}{2} -\theta_t)} = \tan \theta_m \tan \theta_t
=\frac{\alpha_5 \sin\left(\frac{\alpha_1}{2}\right)}{\alpha_1 \sin\left(\frac{\alpha_5}{2}\right)}.
\end{equation*}
From \eqref{eq:alphanodd}, we have that for $n$ odd,
\begin{equation*}
\frac{\alpha_n}{\sin\left(\frac{\alpha_n}{2}\right)} = -\frac{h\pi}{\cos\left(\frac{\alpha_n}{2}\right)}.
\end{equation*}
Hence we obtain
\begin{equation*}
\frac{\tan \theta_m}{\tan (\frac{\pi}{2} -\theta_t)} =
\frac{\cos\left(\frac{\alpha_1}{2}\right)}{\cos\left(\frac{\alpha_5}{2}\right)}.
\end{equation*}
From the asymptotic expansions \eqref{alpha1asymp} and \eqref{alpha5asymp}, we obtain that as $h \rightarrow \infty$,
\begin{align*}
\cos\left(\frac{\alpha_1}{2}\right) &\sim -\left(1 - \frac{2}{h^2} + \mathcal{O}\left(\frac{1}{h^3}\right)\right),\\
\cos\left(\frac{\alpha_5}{2}\right) &\sim -\left(1 - \frac{18}{h^2} + \mathcal{O}\left(\frac{1}{h^3}\right)\right).
\end{align*}
We deduce that as $h \rightarrow \infty$,
\begin{equation}\label{above}
\frac{\tan \theta_m}{\tan (\frac{\pi}{2} -\theta_t)} = 1 + \frac{16}{h^2} + \mathcal{O}\left(\frac{1}{h^3}\right).
\end{equation}
Let $\delta \theta =   \theta_m + \theta_t-\frac \pi 2$.
Then, observing that $\theta_m- \arctan \frac 13 =\mathcal O (\frac 1h)$ and $\frac \pi 2 - \theta_t -\arctan \frac 13=\mathcal O (\frac 1h)$, we get
$$
\tan \theta_m = \tan \left(\frac \pi 2 - \theta_t \right) + \delta \theta  \frac{10}{9} \left(1+\mathcal O \left(\frac 1h\right)\right).
$$
Dividing by $\tan (\frac \pi 2 - \theta_t )$ leads to
$$
\frac{\tan \theta_m}{\tan (\frac{\pi}{2} -\theta_t)} = 1 + \delta \theta \, \frac{10}{3} \left(1+\mathcal O \left(\frac 1h\right)\right)\,.
$$
From \eqref{above}, we  then get
$$
\delta \theta \, \frac{10}{3} \left(1+\mathcal O \left(\frac 1h\right)\right) = \frac{16}{h^2}\,.
$$
From this we deduce that
$$
\delta \theta=\theta_m(h) + \theta_t (h) - \frac \pi 2 \sim \frac{24}{5h^2}\,.
$$
This gives the strict positivity  of $\theta_m(h) + \theta_t (h) - \frac \pi 2$  for $h$ large enough, a property which is numerically satisfied for $h=20$.
 In Figure~\ref{fig:g20500}, we plot $h \mapsto g(h):=\frac{\alpha_5 \sin\left(\frac{\alpha_1}{2}\right)}{\alpha_1 \sin\left(\frac{\alpha_5}{2}\right)}$ for $20 \leq h \leq 500$ and note that it approaches 1 from above.
 \begin{figure}[!ht]
 \begin{center}
 \includegraphics[width=0.8\textwidth]{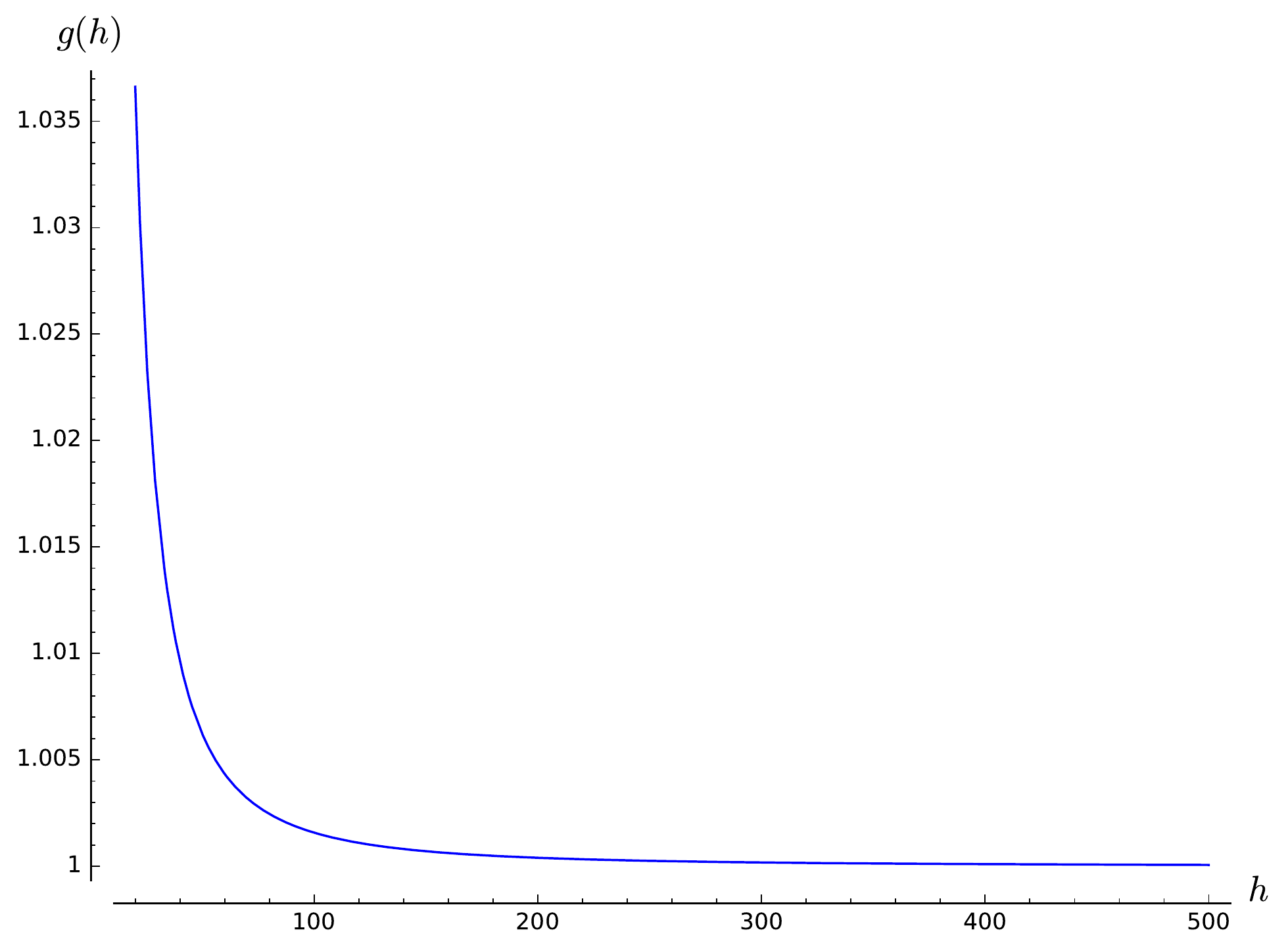}
 \caption{The graph of $h \mapsto g(h):=\frac{\alpha_5 \sin\left(\frac{\alpha_1}{2}\right)}{\alpha_1 \sin\left(\frac{\alpha_5}{2}\right)}$ for $20 \leq h \leq 500$.}
 \label{fig:g20500}
  \end{center}
 \end{figure}

In Figure~\ref{fig:25crith20}, we plot $ \Phi_{h,\theta,5,1}$ for  $h=20$ and $\theta = 0$, $\frac{\pi}{2} - \theta_t$, $\theta=\frac12 (\frac{\pi}{2} - \theta_t + \theta_m)$,
$\theta_m$, $\frac{\pi}{4}$, $\frac{\pi}{2}-\theta_m$, $\theta=\frac12 (\frac{\pi}{2} - \theta_m + \theta_t)$,
$\theta_t$, $\frac{\pi}{2}$, $\frac{5\pi}{8}$, $\frac{3\pi}{4}$, $\frac{13\pi}{16}$.
From this figure, we make the following observations.

 \begin{figure}
\centering
\subfloat[ $\theta=0$ (blue), $\theta=\frac{\pi}{2} - \theta_t$ (orange), $\theta=\frac12 (\frac{\pi}{2} - \theta_t + \theta_m)$ (maroon).\label{25crit1}]{%
  \includegraphics[width=\textwidth]{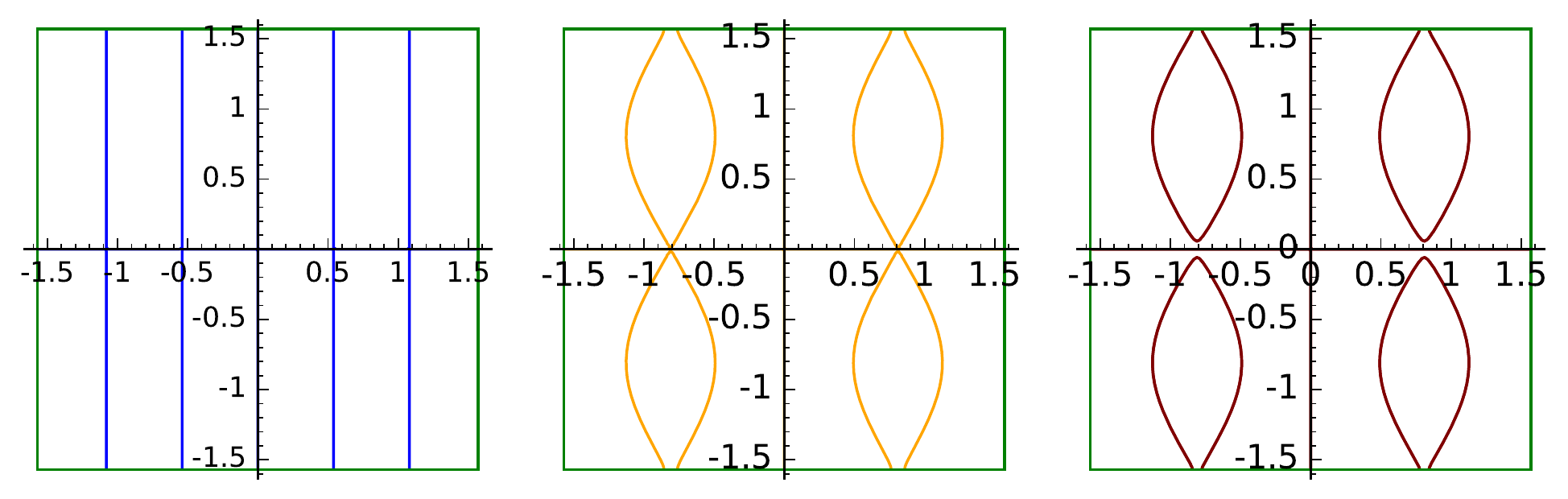}%
  }\par
\subfloat[ $\theta=\theta_m$ (magenta), $\theta=\frac{\pi}{4}$ (red), $\theta=\frac{\pi}2 - \theta_m$ (purple). \label{25crit2}]{%
  \includegraphics[width=\textwidth]{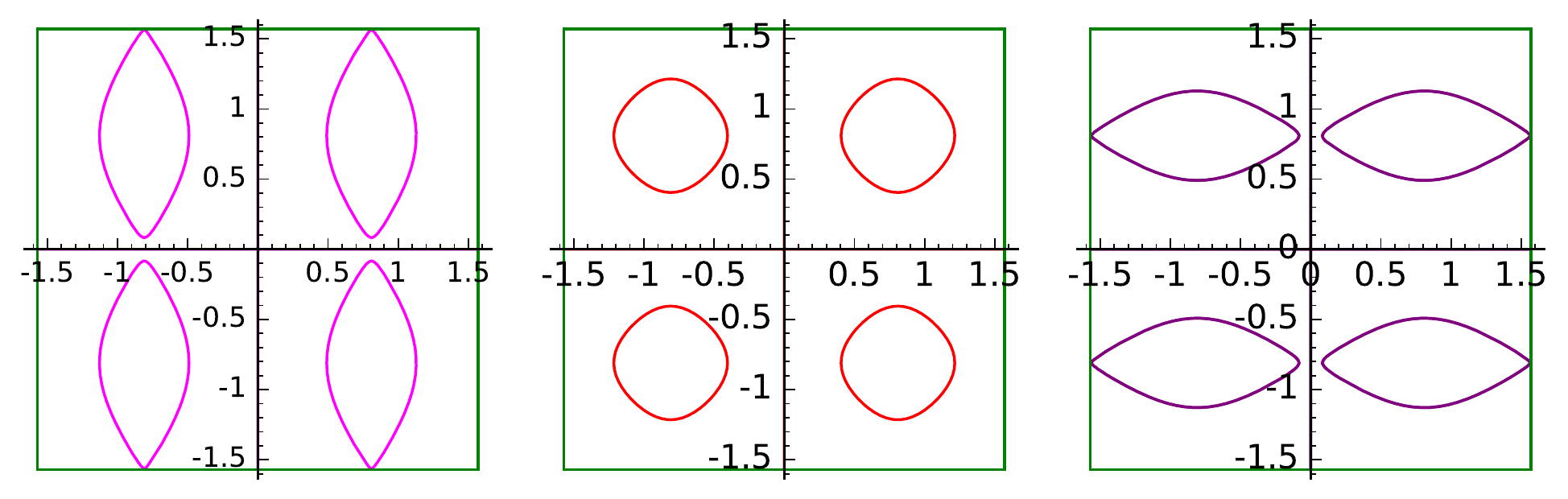}%
  }\par
\subfloat[ $\theta=\frac12 (\frac{\pi}{2} - \theta_m + \theta_t)$ (deep sky blue), $\theta=\theta_t$ (teal), $\theta=\frac{\pi}2$ (grey). \label{25crit3}]{%
  \includegraphics[width=\textwidth]{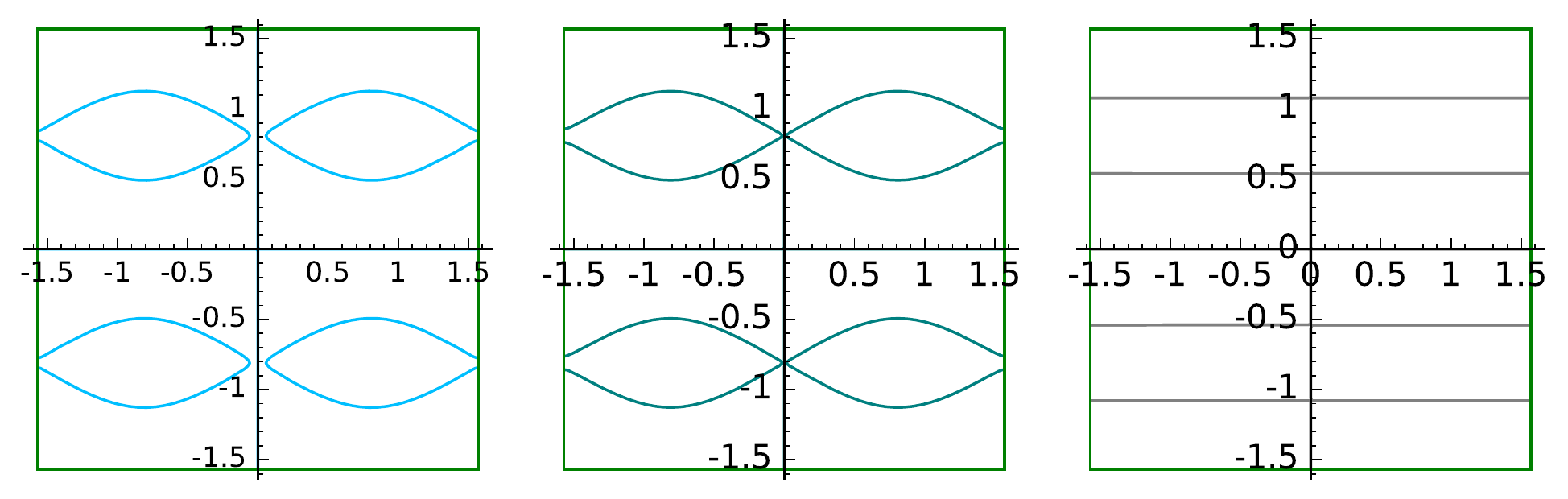}%
  }\par
\subfloat[ $\frac{5\pi}{8}$ (lime), $\theta=\frac{3\pi}{4}$ (navy), $\theta=\frac{13\pi}{16}$ (gold). \label{25crit4}]{%
  \includegraphics[width=\textwidth]{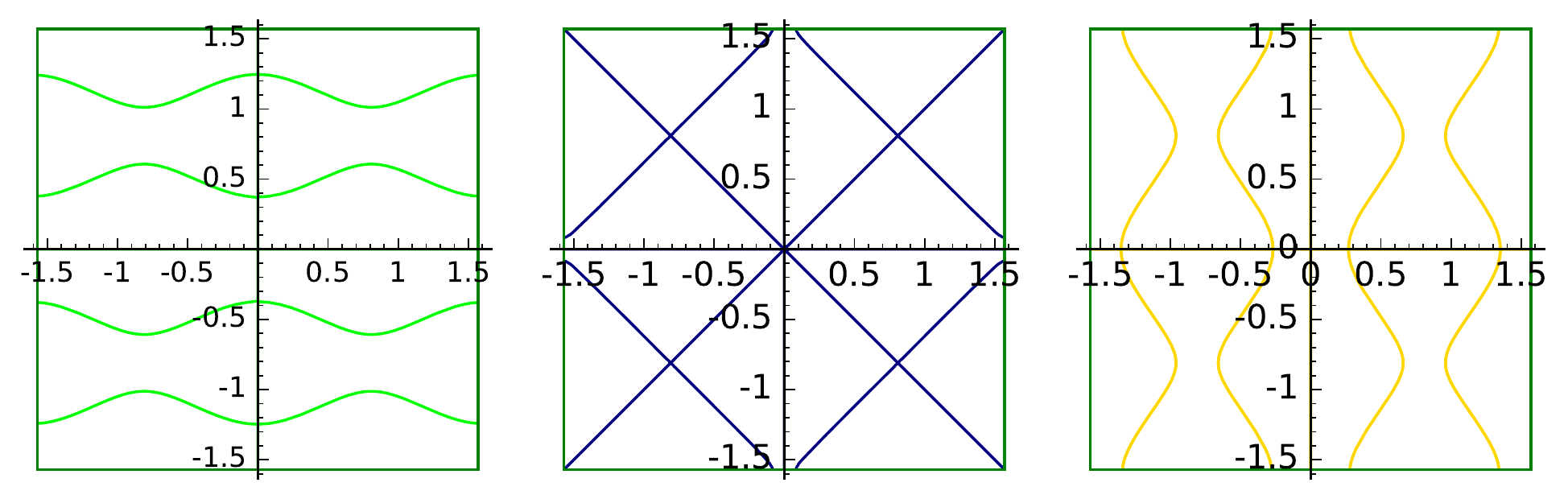}%
  }
\caption{The Robin eigenfunction $ \Phi_{h,\theta,5,1}$ for $h=20$ and various values of $\theta$.}
\label{fig:25crith20}
\end{figure}

For $0\leq\theta< \frac{\pi}{2} - \theta_t$, there are 12 boundary critical points, 5 interior critical points
and 12 nodal domains.

For $\theta=\frac{\pi}{2} - \theta_t$, there are 12 boundary critical points, 3 interior critical points
and 12 nodal domains.

For $\frac{\pi}{2} - \theta_t < \theta < \theta_m$, there are 12 boundary critical points, 1 interior critical point
and 8 nodal domains  (see, for example, Part (a) of Figure~\ref{fig:25crith20} (maroon)).

For $\theta=\theta_m$, there are 8 boundary critical points, 1 interior critical point and 8 nodal domains.

For $\theta_m < \theta < \frac{\pi}{2} - \theta_m$, there are 4 boundary critical points, 1 interior critical point
and 8 nodal domains.

For $\theta = \frac{\pi}{2} - \theta_m$, there are 8 boundary critical points, 1 interior critical point
and 8 nodal domains.

For $\frac{\pi}{2} - \theta_m < \theta < \theta_t$, there are 12 boundary critical points, 1 interior critical points
and 8 nodal domains (see, for example, Part (c) of Figure~\ref{fig:25crith20} (deep sky blue)).

For $\theta = \theta_t$, there are 12 boundary critical points, 3 interior critical points and 12 nodal domains.

For $\theta_t < \theta < \frac{3\pi}{4}$, there are 12 boundary critical points, 5 interior critical points
and 12 nodal domains.

For $\theta = \frac{3\pi}{4}$, there are 16 boundary critical points, 5 interior critical points
and 16 nodal domains.

For $\frac{3\pi}{4}< \theta < \pi$, there are 12 boundary critical points, 5 interior critical points
and 12 nodal domains.

 By comparing Figure~\ref{fig:25crith20} with Figure~\ref{fig:h20com} below, we see that for $h=20$ and $\theta=\frac{3\pi}4$, the nodal structure for the twenty-fifth Robin eigenfunction is not obtained from the nodal structure of the fifth Robin eigenfunction (by the aforementioned folding procedure on the square $ (-\frac{\pi}{2},\frac{\pi}{2})^2$
 which holds for the Dirichlet case).  For example, this can be seen by comparing the navy curve in
 Part (d) of Figure~\ref{fig:25crith20} with the  navy curve in Figure~\ref{fig:h20com}.

 \begin{figure}[!ht]
 \begin{center}
 \includegraphics[width=8cm]{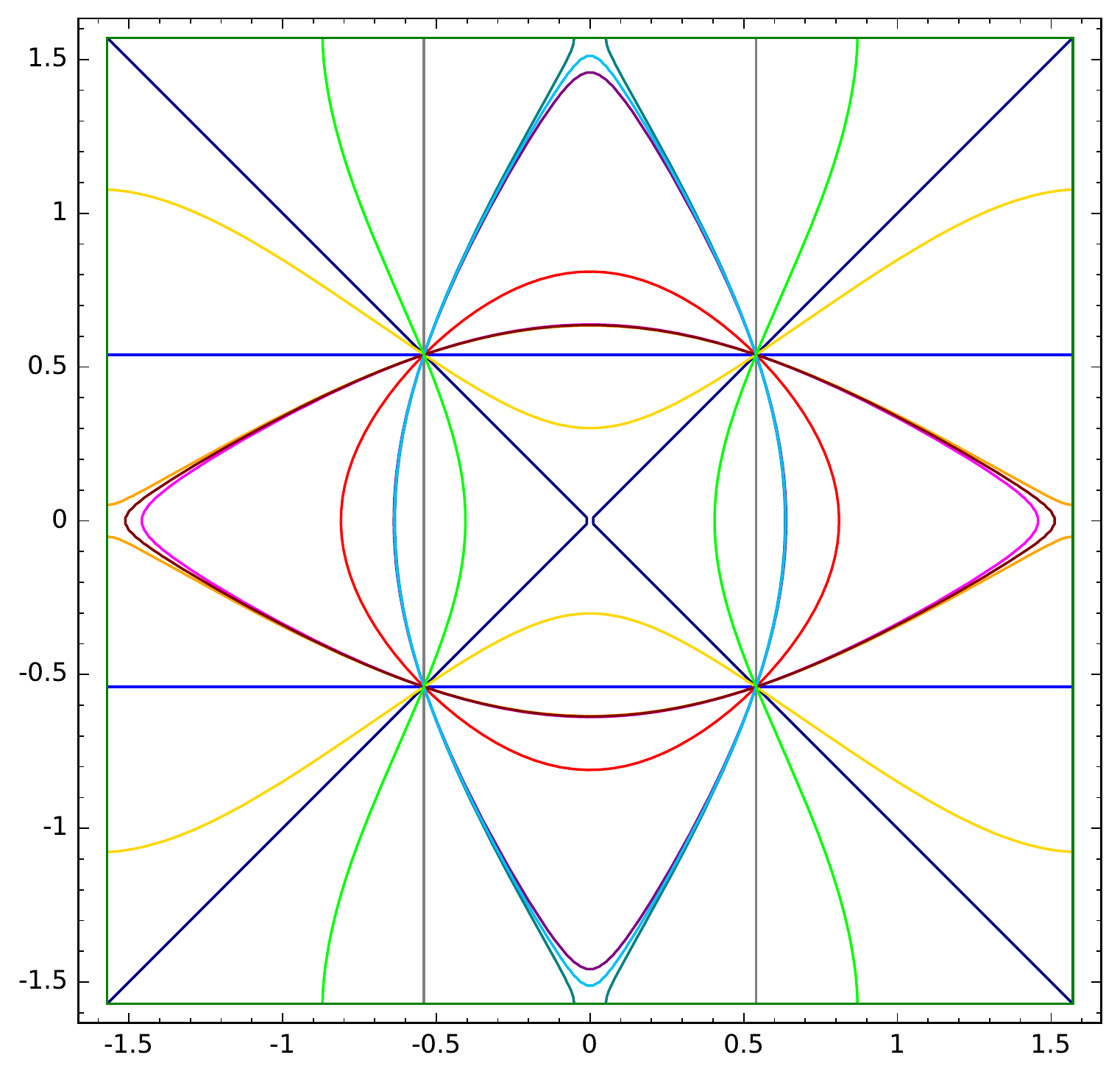}
 \caption{The fifth Robin eigenfunction $ \Phi_{h,\theta,0,2}$ for $h=20$ and various values of $\theta$.
 The values of $\theta$ are $\theta=0$ (blue), $\theta=\frac{\pi}{2} - \theta_t$ (orange),
 $\theta=\frac12 (\frac{\pi}{2} - \theta_t + \theta_m)$ (maroon), $\theta=\theta_m$ (magenta),
 $\theta=\frac{\pi}{4}$ (red), $\theta=\frac{\pi}2 - \theta_m$ (purple), $\theta=\frac12 (\frac{\pi}{2} - \theta_m + \theta_t)$ (deep sky blue), $\theta=\theta_t$ (teal),
 $\theta=\frac{\pi}2$ (grey), $\frac{5\pi}{8}$ (lime), $\theta=\frac{3\pi}{4}$ (navy), $\theta=\frac{13\pi}{16}$ (gold).}
 \label{fig:h20com}
  \end{center}
 \end{figure}

 The numerical experiment discussed above suggests that there are no new transitions and no new critical points appear as $h$ increases from $h=20$ to $h=+\infty$ (see, for example, Figure~\ref{fig:25DirCrit}).\\

We remark that in general, for $h=h_{25}^*$ any eigenfunction corresponding to $\lambda_{25,h}$ is a linear combination of $u_{4,3}(x,y)$, $u_{3,4}(x,y)$, $u_{5,1}(x,y)$ and $u_{1,5}(x,y)$. Such an eigenfunction  might not have any common symmetries.  We note that we have not shown that $\lambda_{25,h_{25}^*(S)}$ is not Courant-sharp.

\subsection{Multiple crossings: analysis of examples.}\label{ss:lambda84}

Although Proposition~\ref{p:crossing} asserts that the curves corresponding to two distinct pairs can cross
at most once, it is possible that an eigenvalue $\lambda_{n,h}(S)$ is given by more than two distinct curves
as $h$ varies.

 The situation for the eigenvalues $\lambda_{84,h}, \dots, \lambda_{92,h}$ seems to be quite complicated.
We claim that these eigenvalues are given by the curves corresponding to the pairs $(9,4)$, $(7,7)$, $(10,0)$, $(8,6)$, $(10,1)$.
We first show that none of the curves corresponding to other pairs intersect these ones.

By considering Appendix~\ref{sA} and monotonicity of the Robin eigenvalues with respect to $h$, we have that all curves corresponding to pairs $(p,q)$ with $\pi^{-2}(\alpha_{p}(h)^2 + \alpha_{q}(h)^2) \geq 130$ for all $h \geq 0$ do not  intersect the curves corresponding to the pairs $(9,4), (7,7), (10,0), (8,6), (10,1)$. That is $(11,3), (9,7)$ and so on.
From above, we must consider the curves corresponding to $(10,2)$, $(9,5)$, $(10,3)$, $(8,7)$, $(10,4)$, $(9,6)$,
$(11,0)$, $(11,1)$, $(11,2)$, $(10,5)$, $(8,8)$.

Note that if $q \leq r$, then $\alpha_{p}(h)^2 + \alpha_{q}(h)^2 \leq \alpha_{p}(h)^2 + \alpha_{r}(h)^2$ for all $h \geq 0$.
So it suffices to show that the curves corresponding to $(9,5), (8,7), (11,0)$ do not intersect those
corresponding to $(9,4)$, $(7,7)$, $(10,0)$, $(8,6)$, $(10,1)$. Numerically, we compute that
\begin{align*}
 \lambda_{9,5,h}(S)  &= \lambda_{5,9,h}(S) \geq 130 \text{ for } h > 26.9531, \\
 \lambda_{8,7,h}(S)  &= \lambda_{7,8,h}(S) \geq 130 \text{ for } h > 9.3456, \\
 \lambda_{11,0,h}(S) &= \lambda_{0,11,h}(S) \geq 130 \text{ for } h > 7.3264.
 \end{align*}
So we need to consider $h \leq 27$.
We note that the curves corresponding to $(8,6)$ and $(10,2)$ give rise to the same value at $h=+\infty$.

Again by Appendix~\ref{sA} and monotonicity of the Robin eigenvalues with respect to $h$, we have that all pairs $(p,q)$ with $\pi^{-2}(\alpha_{p}(h)^2 + \alpha_{q}(h)^2) \leq 97$ for all $h \geq 0$ do not intersect the curves corresponding to the pairs $(9,4)$, $(7,7)$, $(10,0)$, $(8,6)$, $(10,1)$. That is $(8,3), (8,2)$ and below.
Hence, from below, we must consider the curves corresponding to $(6,6)$, $(7,5)$, $(8,4)$, $(9,0)$, $(9,1)$, $(9,2)$,
$(7,6)$, $(8,5)$, $(9,3)$. Similarly to the above, it suffices to consider $(7,6)$, $(8,5)$, $(9,3)$.
We compute numerically that
\begin{align*}
 \lambda_{9,4,h}(S)  & =\lambda_{4,9,h}(S) \geq 117 \text{ for } h > 17.5353, \\
 \lambda_{7,7,h}(S)  & \geq 117 \text{ for } h > 12.4168, \\
 \lambda_{10,0,h}(S) &= \lambda_{0,10,h}(S) \geq 117 \text{ for } h > 28.8245, \\
 \lambda_{8,6,h}(S) &= \lambda_{6,8,h}(S) \geq 117 \text{ for } h > 9.9784, \\
 \lambda_{10,1,h}(S) &= \lambda_{1,10,h}(S) \geq 117 \text{ for } h > 16.9735.
 \end{align*}
So we must consider $h \leq 29$.

In Figure~\ref{fig:lambda84curves}, we plot the curves corresponding to the pairs $(7,6)$, $(8,5)$, $(9,3)$, $(9,4)$, $(7,7)$, $(10,0)$, $(8,6)$, $(10,1)$, $(10,2)$, $(9,5)$, $(8,7)$, $(11,0)$ for $h \leq 29$ and we see that the eigenvalues $\lambda_{84,h}, \dots, \lambda_{92,h}$ are indeed given by the pairs $(9,4)$, $(7,7)$, $(10,0)$, $(8,6)$, $(10,1)$.

\begin{figure}[!h]
 \begin{center}
\includegraphics[width=10cm]{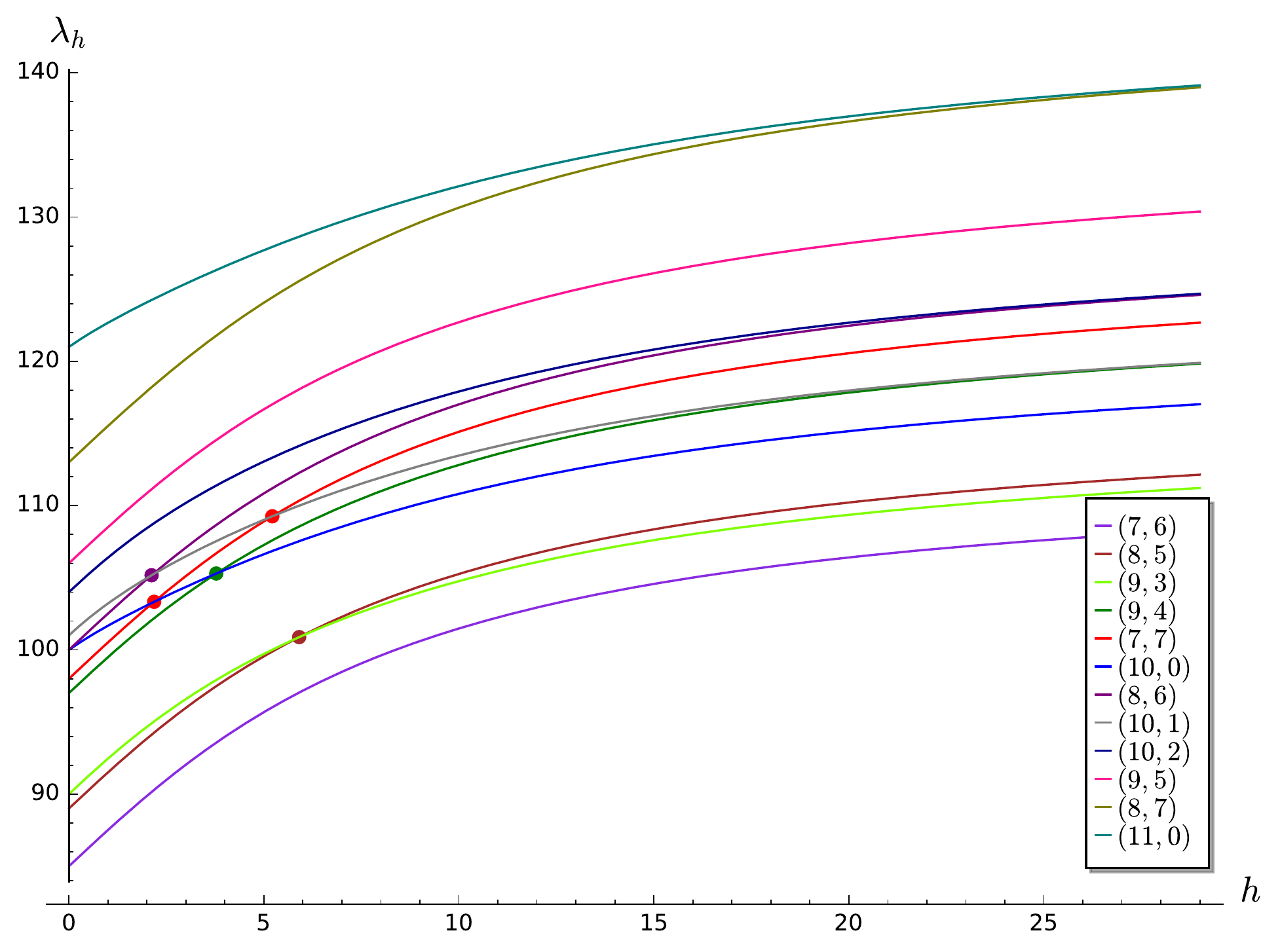}
 \caption{The curves $(\alpha_m(h)^2 + \alpha_n(h)^2)/\pi^2$ for $h\leq 29$ corresponding to the pairs $(7,6)$, $(8,5)$, $(9,3)$, $(9,4)$, $(7,7)$, $(10,0)$, $(8,6)$, $(10,1)$, $(10,2)$, $(9,5)$, $(8,7)$, $(11,0)$.}
 \label{fig:lambda84curves}
  \end{center}
 \end{figure}

By the above, we have that there exist $0< h_a < h_b < h_c < h_d < +\infty$ such that the following hold.

For $0 \leq h \leq h_a$, the curve corresponding to $(9,4)$ lies below that corresponding to $(7,7)$ which
lies below $(10,0)$, which lies below $(8,6)$ which in turn lies below $(10,1)$.
So for $0 \leq h \leq h_a$, $\lambda_{84,h} = \lambda_{85,h}$ is given by $(9,4)$, $\lambda_{86,h}$ by $(7,7)$,
$\lambda_{87,h}=\lambda_{88,h}$ by $(10,0)$, $\lambda_{89,h}=\lambda_{90,h}$ by $(8,6)$ and $\lambda_{91,h}=\lambda_{92,h}$ by $(10,1)$.

Similarly for $h_a \leq h \leq h_b$, $\lambda_{84,h} = \lambda_{85,h}$ is given by $(9,4)$, $\lambda_{86,h}$ by $(7,7)$, $\lambda_{87,h}=\lambda_{88,h}$ by $(10,0)$, $\lambda_{89,h}=\lambda_{90,h}$ by $(10,1)$ and $\lambda_{91,h}=\lambda_{92,h}$ by $(8,6)$.

For $h_b \leq h \leq h_c$, $\lambda_{84,h} = \lambda_{85,h}$ is given by $(9,4)$, $\lambda_{86,h}=\lambda_{87,h}$ by $(10,0)$, $\lambda_{88,h}$ by $(7,7)$, $\lambda_{89,h}=\lambda_{90,h}$ by $(10,1)$ and $\lambda_{91,h}=\lambda_{92,h}$ by $(8,6)$.

For $h_c \leq h \leq h_d$, $\lambda_{84,h} = \lambda_{85,h}$ is given by $(10,0)$, $\lambda_{86,h}=\lambda_{87,h}$ by $(9,4)$, $\lambda_{88,h}$ by $(7,7)$, $\lambda_{89,h}=\lambda_{90,h}$ by $(10,1)$ and $\lambda_{91,h}=\lambda_{92,h}$ by $(8,6)$.

For $h \geq h_d$, $\lambda_{84,h} = \lambda_{85,h}$ is given by $(10,0)$, $\lambda_{86,h}=\lambda_{87,h}$ by $(9,4)$,
$\lambda_{88,h}=\lambda_{89,h}$ by $(10,1)$, $\lambda_{90,h}$ by $(7,7)$ and $\lambda_{91,h}=\lambda_{92,h}$ by $(8,6)$.

We compute numerically that $h_a \sim 2.1209$, $h_b \sim 2.1864$, $h_c \sim 3.7786$, and $h_d \sim 5.2167$.
 In Figure~\ref{fig:lambda84}, we plot the curves corresponding to the pairs $(7,6)$, $(8,5)$, $(9,3)$, $(9,4)$, $(7,7)$, $(10,0)$, $(8,6)$, $(10,1)$, $(10,2)$, $(9,5)$, $(8,7)$, $(11,0)$ for $h \leq 10$.

\begin{figure}[!h]
 \begin{center}
\includegraphics[width=10cm]{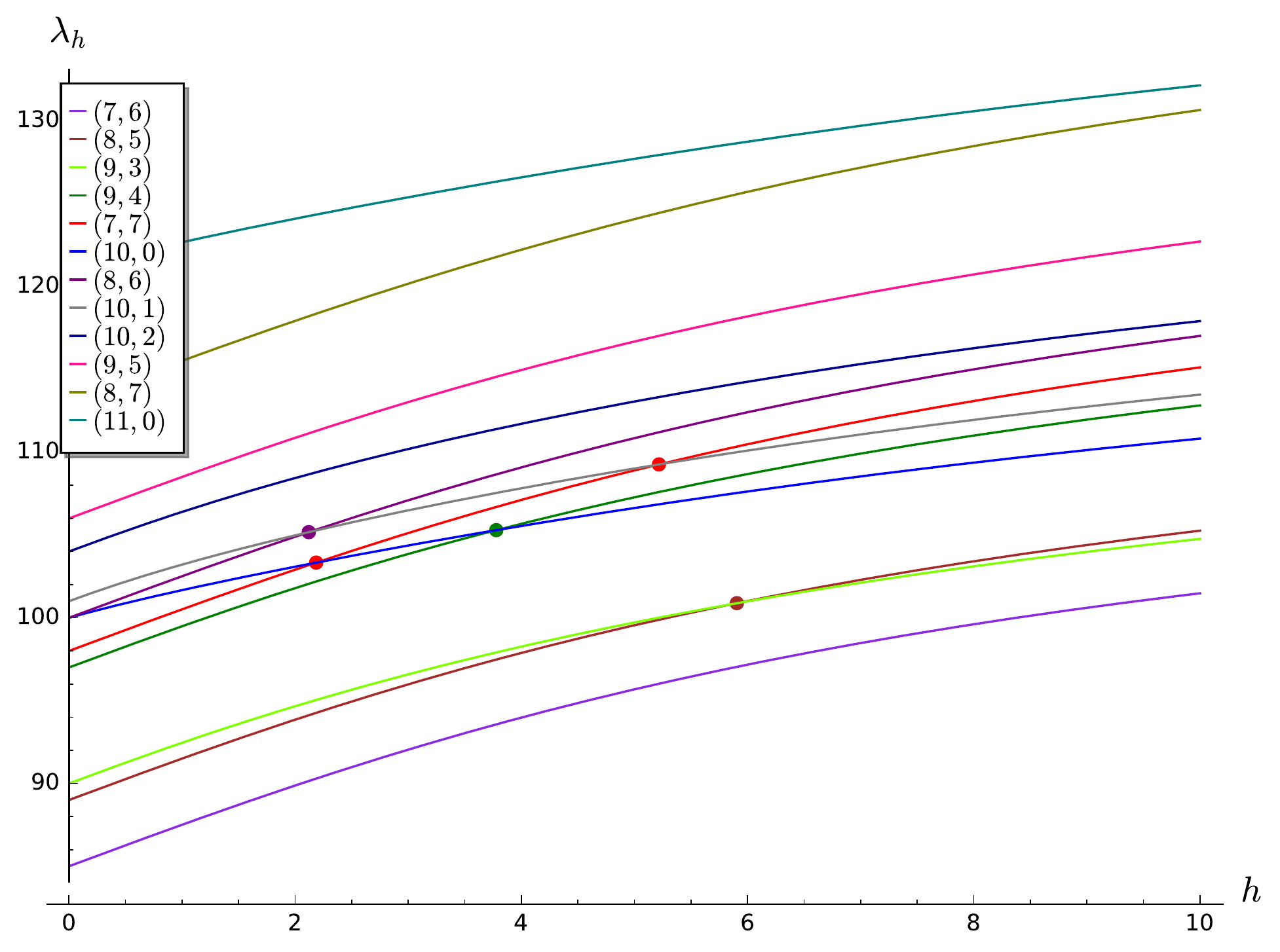}
 \caption{The Robin eigenvalues of the square $(\alpha_m(h)^2 + \alpha_n(h)^2)/\pi^2$ for $h\leq 10$ corresponding to the pairs $(7,6)$, $(8,5)$, $(9,3)$, $(9,4)$, $(7,7)$, $(10,0)$, $(8,6)$, $(10,1)$, $(10,2)$, $(9,5)$, $(8,7)$, $(11,0)$.}
 \label{fig:lambda84}
  \end{center}
 \end{figure}

We note that the curves corresponding to $(8,6)$ and $(10,0)$ give rise to the same Neumann eigenvalue when $h=0$.
In addition, the curves corresponding to $(9,4)$ and $(10,1)$ give rise to the same Dirichlet eigenvalue
at $h = +\infty$. (see Appendix~\ref{sA}).\\

We see that it is possible that the labelling could switch more than once for a given eigenvalue
(that is, the eigenvalue could be given by more than two pairs).

\appendix

\section{Comparison Dirichlet-Neumann.}\label{sA}
 In this appendix, we recall from \cite{BH1,HPS1} the Neumann eigenvalues $\lambda_k^{N}$ of $S$
(in the left-hand side below) and the Dirichlet eigenvalues $\lambda_k^{D}$ of $S$ (in the right-hand side below)
for $k \leq 129$. We also recall the pairs $(m,n)$ corresponding to these eigenvalues and the values $m^2+n^2$ of the eigenvalues. The purpose is to illustrate the values of $k \leq 129$ for which there are crossings between the curves corresponding to the Robin eigenvalues of $S$. We use colours to emphasise this. For example, the Robin eigenvalue $\lambda_{2,2,h}(S)$ starts as the ninth eigenvalue when $h=0$ but as $h \to +\infty$, it corresponds to the eleventh eigenvalue.
\\

\begin{tabular}{|c  c  c  c |}
\hline
Neumann & & & \\
\hline
$m$ & $n$ & $m^2+n^2$ & $k$\\
\hline
0 & 0 & 0 & 1\\
1 &	0 & 1 &	2,3\\
0 & 1 & 1 & 2,3\\
1 &	1 &	2 &	4\\
2 &	0 &	4 &	5,6\\
0 &	2	& 4 &	5,6\\
2	& 1 &	5 &	7,8\\
1 &	2 &	5 &	7,8\\
\rowcolor{yellow}
2 &	2 &	8 &	9\\
\rowcolor{cyan}
3 &	0 &	9 &	10,11\\
\rowcolor{cyan}
0 &	3 &	9 &	10,11\\
3 &	1 &	10 &	12,13\\
1 &	3 &	10 &	12,13\\
3 &	2 &	13 &	14,15\\
2 &	3 &	13 &	14,15\\
4 &	0 &	16 &	16,17\\
0 &	4 &	16 &	16,17\\
4 &	1 &	17 &	18,19\\
1 &	4 &	17 &	18,19\\
3 &	3 &	18 &	20\\
4 &	2 &	20 &	21,22\\
2 &	4 &	20 &	21,22\\
5 &	0 &	25 &	23,24,25,26\\
0 &	5 &	25 &	23,24,25,26\\
\rowcolor{yellow}
4 &	3 &	25 &	23,24,25,26\\
\rowcolor{yellow}
3 &	4 &	25 &	23,24,25,26\\
\rowcolor{cyan}
5 &	1 &	26 &	27,28\\
\rowcolor{cyan}
1 &	5 &	26 &	27,28\\
5 &	2 &	29 &	29,30\\
2 &	5 &	29 &	29,30\\
4 &	4 &	32 &	31\\
\rowcolor{yellow}
5 &	3 &	34 &	32,33\\
\rowcolor{yellow}
3 &	5 &	34 &	32,33\\
\rowcolor{cyan}
6 &	0 &	36 &	34,35\\
\rowcolor{cyan}
0 &	6 &	36 &	34,35\\
6 &	1 &	37 &	36,37\\
1 &	6 &	37 &	36,37\\
6 &	2 &	40 &	38,39\\
2 &	6 &	40 &	38,39\\
5 &	4 &	41 &	40,41\\
4 &	5 &	41 &	40,41\\
6 &	3 &	45 &	42,43\\
3 &	6 &	45 &	42,43\\
7 &	0 &	49 &	44,45\\
0 &	7 &	49 &	44,45\\
\hline
\end{tabular}
\quad
\begin{tabular}{|c  c  c  c |}
\hline
Dirichlet & & & \\
\hline
$m$ & $n$ & $m^2+n^2$ & $k$\\
\hline
1 & 1  & 2 & 1\\
2 &	1 &	5 &	2,3\\
1 &	2 &	5 &	2,3\\
2 &	2 &	8 &	4\\
3 &	1 &	10 &	5,6\\
1 &	3 &	10 &	5,6\\
3 &	2 &	13 &	7,8\\
2 &	3 &	13 &	7,8\\
\rowcolor{cyan}
4 &	1 &	17 &	9,10\\
\rowcolor{cyan}
1 &	4 &	17 &	9,10\\
\rowcolor{yellow}
3 &	3 &	18 &	11\\
4 &	2 &	20 &	12,13\\
2 &	4 &	20 &	12,13\\
4 &	3 &	25 &	14,15\\
3 &	4 &	25 &	14,15\\
5 &	1 &	26 &	16,17\\
1 &	5 &	26 &	16,17\\
5 &	2 &	29 &	18,19\\
2 &	5 &	29 &	18,19\\
4 &	4 &	32 &	20\\
5 &	3 &	34 &	21,22\\
3 &	5 &	34 &	21,22\\
6 &	1 &	37 &	23,24\\
1 &	6 &	37 &	23,24\\
\rowcolor{cyan}
6 &	2 &	40 &	25,26\\
\rowcolor{cyan}
2 &	6 &	40 &	25,26\\
\rowcolor{yellow}
5 &	4 &	41 &	27,28\\
\rowcolor{yellow}
4 &	5 &	41 &	27,28\\
6 &	3 &	45 &	29,30\\
3 &	6 &	45 &	29,30\\
5 &	5 &	50 &	31,32,33\\
\rowcolor{cyan}
7 &	1 &	50 &	31,32,33\\
\rowcolor{cyan}
1 &	7 &	50 &	31,32,33\\
\rowcolor{yellow}
6 &	4 &	52 &	34,35\\
\rowcolor{yellow}
4 &	6 &	52 &	34,35\\
7 &	2 &	53 &	36,37\\
2 &	7 &	53 &	36,37\\
7 &	3 &	58 &	38,39\\
3 &	7 &	58 &	38,39\\
6 &	5 &	61 &	40,41\\
5 &	6 &	61 &	40,41\\
8 &	1 &	65 &	42,43,44,45\\
7 &	4 &	65 &	42,43,44,45\\
4 &	7 &	65 &	42,43,44,45\\
1 &	8 &	65 &	42,43,44,45\\
\hline
\end{tabular}

\begin{tabular}{|c  c  c  c |} 
\hline
Neumann & & & \\
\hline
$m$ & $n$ & $m^2+n^2$ & $k$\\
\hline
7 &	1 &	50 &	46,47,48\\
5 &	5 &	50 &	46,47,48\\
1 &	7 &	50 &	46,47,48\\
\rowcolor{yellow}
6 &	4 &	52 &	49,50\\
\rowcolor{yellow}
4 &	6 &	52 &	49,50\\
\rowcolor{cyan}
7 &	2 &	53 &	51,52\\
\rowcolor{cyan}
2 &	7 &	53 &	51,52\\
7 &	3 &	58 &	53,54\\
3 &	7 &	58 &	53,54\\
\rowcolor{yellow}
6 &	5 &	61 &	55,56\\
\rowcolor{yellow}
5 &	6 &	61 &	55,56\\
\rowcolor{cyan}
8 &	0 &	64 &	57,58\\
\rowcolor{cyan}
0 &	8 &	64 &	57,58\\
8 &	1 &	65 &	59,60,61,62\\
1 &	8 &	65 &	59,60,61,62\\
7 &	4 &	65 &	59,60,61,62\\
4 &	7 &	65 &	59,60,61,62\\
8 &	2 &	68 &	63,64\\
2 &	8 &	68 &	63,64\\
\rowcolor{yellow}
6 &	6 &	72 &	65\\
\rowcolor{cyan}
8 &	3 &	73 &	66,67\\
\rowcolor{cyan}
3 &	8 &	73 &	66,67\\
7 &	5 &	74 &	68,69\\
5 &	7 &	74 &	68,69\\
\rowcolor{yellow}
8 &	4 &	80 &	70,71\\
\rowcolor{yellow}
4 &	8 &	80 &	70,71\\
\rowcolor{cyan}
9 &	0 &	81 &	72,73\\
\rowcolor{cyan}
0 &	9 &	81 &	72,73\\
\rowcolor{magenta}
9 &	1 &	82 &	74,75\\
\rowcolor{magenta}
1 &	9 &	82 &	74,75\\
9 &	2 &	85 &	76,77,78,79\\
2 &	9 &	85 &	76,77,78,79\\
7 &	6 &	85 &	76,77,78,79\\
6 &	7 &	85 &	76,77,78,79\\
\rowcolor{cyan}
8 &	5 &	89 &	80,81\\
\rowcolor{cyan}
5 &	8 &	89 &	80,81\\
\rowcolor{yellow}
9 &	3 &	90 &	82,83\\
\rowcolor{yellow}
3 &	9 &	90 &	82,83\\
\rowcolor{magenta}
9 &	4 &	97 &	84,85\\
\rowcolor{magenta}
4 &	9 &	97 &	84,85\\
\rowcolor{orange}
7 &	7 &	98 &	86\\
\rowcolor{gray}
10 & 0 & 100 &	87,88,89,90\\
\rowcolor{gray}
0 &	10 & 100 &	87,88,89,90\\
\rowcolor{red}
8 &	6 &	100	& 87,88,89,90\\
\rowcolor{red}
6 &	8 &	100	& 87,88,89,90\\
\rowcolor{blue}
10 & 1 & 101 &	91,92\\
\rowcolor{blue}
1 &	10 & 101 &	91,92\\
\hline
\end{tabular}
\quad
\begin{tabular}{|c  c  c  c |}
\hline
Dirichlet & & & \\
\hline
$m$ & $n$ & $m^2+n^2$ & $k$\\
\hline
8 &	2 &	68 &	46,47\\
2 &	8 &	68 &	46,47\\
6 &	6 &	72 &	48\\
\rowcolor{cyan}
8 &	3 &	73 &	49,50\\
\rowcolor{cyan}
3 &	8 &	73 &	49,50\\
\rowcolor{yellow}
7 &	5 &	74 &	51,52\\
\rowcolor{yellow}
5 &	7 &	74 &	51,52\\
8 &	4 &	80 &	53,54\\
4 &	8 &	80 &	53,54\\
\rowcolor{cyan}
9 &	1 &	82 &	55,56\\
\rowcolor{cyan}
1 &	9 &	82 &	55,56\\
\rowcolor{yellow}
7 &	6 &	85 &	57,58\\
\rowcolor{yellow}
6 &	7 &	85 &	57,58\\
9 &	2 &	85 &	59,60\\
2 &	9 &	85 &	59,60\\
8 &	5 &	89 &	61,62\\
5 &	8 &	89 &	61,62\\
9 &	3 &	90 &	63,64\\
3 &	9 &	90 &	63,64\\
\rowcolor{cyan}
9 &	4 &	97 &	65,66\\
\rowcolor{cyan}
4 &	9 &	97 &	65,66\\
\rowcolor{yellow}
7 &	7 &	98 &	67\\
8 &	6 &	100 &	68,69\\
6 &	8 &	100 &	68,69\\
\rowcolor{cyan}
10 & 1 &	101 &	70,71\\
\rowcolor{cyan}
1 &	10 &	101	& 70,71\\
\rowcolor{magenta}
10 & 2 &	104	& 72,73\\
\rowcolor{magenta}
2 &	10 &	104	& 72,73\\
\rowcolor{yellow}
9 &	5 &	106	& 74,75\\
\rowcolor{yellow}
5 &	9 &	106	& 74,75\\
10 & 3 &	109 &	76,77\\
3 &	10 &	109	& 76,77\\
8 &	7 &	113 &	78,79\\
7 &	8 &	113	& 78,79\\
\rowcolor{yellow}
10 & 4 &	116 &	80,81\\
\rowcolor{yellow}
4 &	10 &	116	& 80,81\\
\rowcolor{cyan}
9 &	6 &	117 &	82,83\\
\rowcolor{cyan}
6 &	9 &	117 &	82,83\\
\rowcolor{gray}
11 & 1 & 122 &	84,85\\
\rowcolor{gray}
1 &	11 & 122 &	84,85\\
\rowcolor{magenta}
10 & 5 & 125 &	86,87,88,89\\
\rowcolor{magenta}
5 &	10 & 125 &	86,87,88,89\\
\rowcolor{blue}
11 & 2 & 125 &	86,87,88,89\\
\rowcolor{blue}
2 &	11 & 125 &	86,87,88,89\\
\rowcolor{orange}
8 &	8 &	128 &	90\\
\rowcolor{red}
9 &	7 &	130 &	91,92,93,94\\
\rowcolor{red}
7 &	9 &	130 &	91,92,93,94\\
\hline
\end{tabular}

\begin{tabular}{|c  c  c  c |} 
\hline
Neumann & & & \\
\hline
$m$ & $n$ & $m^2+n^2$ & $k$\\
\hline
10 & 2 & 104 & 93,94\\
2 &	10 & 104 & 93,94\\
9 & 5 & 106 & 95,96\\
5 &	9 &	106 & 95,96\\
10 & 3 & 109 & 97,98\\
3 &	10 & 109 & 97,98\\	
8 &	7 &	113 & 99,100\\
7 &	8 &	113 & 99,100\\
\rowcolor{yellow}	
10 & 4 & 116 & 101,102\\
\rowcolor{yellow}
4 &	10 & 116 & 101,102\\	
\rowcolor{cyan}
9 &	6 &	117 & 103,104\\
\rowcolor{cyan}
6 &	9 &	117 & 103,104\\	
\rowcolor{magenta}
11 & 0 & 121 & 105,106\\
\rowcolor{magenta}
0 &	11 & 121 & 105,106\\	
\rowcolor{orange}
11 & 1 & 122 & 107,108\\
\rowcolor{orange}
1 &	11 & 122 & 107,108\\	
11 & 2 & 125 & 109 - 112\\
2 &	11 & 125 & 109 - 112\\	
10 & 5 & 125 & 109 - 112\\	
5 &	10 & 125 & 109 - 112\\	
\rowcolor{yellow}
8 &	8 &	128 & 113\\
\rowcolor{cyan}
11 & 3 & 130 & 114 - 117\\
\rowcolor{cyan}
3 &	11 & 130 & 114 - 117\\	
9 &	7 &	130 & 114 - 117\\	
7 &	9 &	130 & 114 - 117\\	
\rowcolor{yellow}
10 & 6 & 136 & 118,119\\
\rowcolor{yellow}
6 &	10 & 136 & 118,119\\
\rowcolor{cyan}
11 & 4 & 137 & 120,121\\
\rowcolor{cyan}
4 &	11 & 137 & 120,121\\	
12 & 0 & 144 & 122,123\\
0 &	12 & 144 & 122,123\\
12 & 1 & 145 & 124 - 127\\
9 &	8 &	145 & 124 - 127\\	
8 &	9 &	145 & 124 - 127\\	
1 &	12 & 145 & 124 - 127\\	
11 & 5 & 146 & 128,129\\
5 &	11 & 146 & 128,129\\	
\hline
\end{tabular}
\quad
\begin{tabular}{|c  c  c  c |} 
\hline
Dirichlet & & & \\
\hline
$m$ & $n$ & $m^2+n^2$ & $k$\\
\hline
11 & 3 & 130 & 93 - 96\\
3 &	11 & 130 & 93 - 96\\
10 & 6 & 136 & 95,96\\
6 &	10 & 136 & 95,96\\	
11 & 4 & 137 & 97,98\\
4 &	11 & 137 & 97,98\\	
9 &	8 &	145 & 99 - 102\\
8 &	9 &	145 & 99 - 102\\	
\rowcolor{magenta}
12 & 1 & 145 & 99 - 102\\	
\rowcolor{magenta}
1 &	12 & 145 & 99 - 102\\	
\rowcolor{yellow}
11 & 5 & 146 & 103,104\\
\rowcolor{yellow}
5 &	11 & 146 & 103,104\\	
\rowcolor{orange}
12 & 2 & 148 & 105,106\\
\rowcolor{orange}
2 &	12 & 148 & 105,106\\
\rowcolor{cyan}
10 & 7 & 149 & 107,108\\
\rowcolor{cyan}
7 &	10 & 149 & 107,108\\	
12 & 3 & 153 & 109,110\\
3 &	12 & 153 & 109,110\\	
11 & 6 & 157 & 111,112\\
6 &	11 & 157 & 111,112\\	
\rowcolor{cyan}
12 & 4 & 160 & 113,114\\
\rowcolor{cyan}
4 &	12 & 160 & 113,114\\	
\rowcolor{yellow}
9 &	9 &	162 & 115\\
10 & 8 & 164 & 116,117\\
8 & 10 & 164 & 116,117\\	
\rowcolor{cyan}
12 & 5 & 169 & 118,119\\
\rowcolor{cyan}
5 &	12 & 169 & 118,119\\	
\rowcolor{yellow}
11 & 7 & 170 & 120 - 123\\
\rowcolor{yellow}
7 &	11 & 170 & 120 - 123\\
13 & 1 & 170 & 120 - 123\\	
1 &	13 & 170 & 120 - 123\\	
13 & 2 & 173 & 124,125\\
2 &	13 & 173 & 124,125\\	
13 & 3 & 178 & 126,127\\	
3 & 13 & 178 & 126,127\\	
12 & 6 & 180 & 128,129\\	
6 &	12 & 180 & 128,129\\	
\hline
\end{tabular}
\newpage

\section{On the local structure of the nodal set.}\label{appB}
 In this appendix, we prove some well-known results for the nodal set of an eigenfunction of the Neumann problem and extend
them to the Robin problem.
Although used in various contributions, for example \cite{HHOHOO}, no detailed proofs seem to be published for
the Neumann problem. For the Dirichlet problem, see \cite{HOMN} and \cite{HHOT} where the case with corners or cracks is also considered.
In addition, we require these results under weaker regularity assumptions on the boundary.\\

\subsection{Main statement.}
\begin{thm}\label{thm:nodinfo} Let $\Omega$ be an open set in $\mathbb R^2$ with $C^{2,+}$ boundary.
Let $h \in [0,+\infty)$ and let $u$  be a real-valued eigenfunction of the Laplacian with $h$-Robin  boundary conditions.
  Then $u\in C^2(\overline\Omega)$. Furthermore, $u$ has the following
  properties:
  \begin{enumerate}
    \item\label{item:taylor} If $u$ and $\nabla u$ vanish at a point $x_0\in \Omega$ then there exists $\ell >1$, $\epsilon >0$ and  a real-valued, non-zero, harmonic, homogeneous
          polynomial of degree $\ell$ such that:
          \begin{equation}\label{eqn:harmpoly}
           u(x)=p_\ell (x-x_0)+\mathcal O(|x-x_0|^{\ell+\epsilon}).
          \end{equation}
  \item If $u$ vanishes at $x_0\in\partial\Omega$, then  \eqref{eqn:harmpoly} holds for some $\ell >0$ and
          \begin{equation}\label{eqn:cosexp}
            u(x)=ar^\ell \cos \ell \omega+\mathcal O(r^{\ell +\epsilon})
          \end{equation}
          for some non-zero $a\in\mathbb R$, where $(r,\omega)$ are polar coordinates
          of $x$ around $x_0$. The angle $\omega$ is chosen so that the tangent
          to the boundary at $x_0$ is given by the equation $\sin\omega=0$.
    \item \label{item:nod}The nodal set $N(u)$ is the union of finitely many, $C^2$-immersed circles in $\Omega$, and $C^1$-immersed lines which connect points of
          $\partial\Omega$. Each of these immersions is called a
          \textit{nodal line}. Note that self-intersections are allowed.
          The connected components of $\Omega\setminus N(u)$ are called
          \textit{nodal domains}.
    \item \label{item:order} If $u$ has a zero of order $\ell$ at a point $x_0\in
          \Omega$ then exactly $\ell $ segments of nodal lines pass through $ x_0$.
          The tangents to the nodal lines at $x_0$ dissect the full circle of radius $B(x_0,\alpha)$ (for $\alpha >0$ small enough) into $2\ell $ equal angles.
          \item  If $u$ has a zero of order $\ell $ at a point $x_0 \in
          \partial\Omega$ then exactly $\ell$ segments of nodal lines meet the
          boundary at $x_0$. The tangents to the nodal lines at $x_0$ are given
          by the equation $\cos \ell \omega=0$, where $\omega$ is chosen as
          in~\eqref{eqn:cosexp}.
  \end{enumerate}
\end{thm}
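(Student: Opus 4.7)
The plan is to combine elliptic regularity, the classical Bers-type local expansion for eigenfunctions, and a local straightening of the boundary. The first step is regularity: since $\partial\Omega$ is $C^{2,\alpha}$ and the Robin condition is a regular oblique derivative condition, Schauder theory (Agmon--Douglis--Nirenberg) yields $u \in C^{2,\alpha}(\overline\Omega)$, hence $u \in C^2(\overline\Omega)$; moreover $u$ is real-analytic in $\Omega$ as a solution of $-\Delta u = \lambda u$. For part (1), at an interior critical zero $x_0$ I would expand $u(x) = \sum_{k \geq 2} p_k(x-x_0)$ in convergent homogeneous polynomial components and let $\ell \geq 2$ be the smallest index with $p_\ell \not\equiv 0$. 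Matching degrees in $-\Delta u = \lambda u$ forces $-\Delta p_\ell = 0$, and \eqref{eqn:harmpoly} follows from standard bounds on the Taylor remainder of an analytic function.

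For part (2), I would straighten $\partial\Omega$ near $x_0 \in \partial\Omega$ via a local $C^{2,\alpha}$ diffeomorphism (equivalently a conformal map, which is $C^{2,\alpha}$ under our boundary assumption) sending a neighbourhood of $x_0$ in $\overline\Omega$ onto the half-disc $\{t>0\}$ with $x_0 \mapsto 0$. In the new variables $(s,t)$ the equation becomes $Lu = \lambda u$ with $L$ second-order elliptic whose principal symbol at the origin is the Euclidean Laplacian (the conformal choice makes this exact, and produces only a bounded multiplicative factor on the right-hand side), and the Robin condition reads $-\partial_t u + \tilde h(s)\, u = 0$ on $\{t = 0\}$. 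Taylor-expanding $u$ to the required order (bootstrapping smoothness from interior analyticity via Schauder estimates up to the flat boundary) and equating homogeneous components, the leading term $p_\ell$ must be harmonic; since the Robin contribution $\tilde h(s)\, u$ on $\{t=0\}$ is of order $\ell$ while $\partial_t u$ on $\{t=0\}$ is of order $\ell-1$, the leading-order boundary condition on $p_\ell$ is the pure Neumann condition $\partial_t p_\ell(s,0) = 0$. Writing $p_\ell = r^\ell(a \cos\ell\omega + b \sin\ell\omega)$ and imposing $\partial_\omega p_\ell = 0$ at $\omega = 0, \pi$ forces $b = 0$ and $\ell \in \mathbb N$ with $\ell \geq 1$, yielding \eqref{eqn:cosexp}.

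Claims (3)--(5) are then read off from these local models. Away from the (isolated) critical zeros of $u$, the implicit function theorem gives that $N(u)$ is locally a $C^2$ curve. At an interior critical zero of order $\ell$, $p_\ell$ equals, after rotation, $\operatorname{Re}((x+iy)^\ell) = r^\ell \cos(\ell \omega)$, whose zero set is $\ell$ lines through the origin making equal angles $\pi/\ell$; at a boundary zero of order $\ell$, the zero set of $a r^\ell \cos(\ell\omega)$ in $\{t > 0\}$ is the $\ell$ rays $\omega = (2k+1)\pi/(2\ell)$, with tangents characterised by $\cos(\ell\omega) = 0$. Finiteness of the nodal set then follows from compactness of $\overline\Omega$ together with discreteness of the critical zeros. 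The main obstacle is the boundary expansion in step two: because $\partial\Omega$ is only $C^{2,\alpha}$, one cannot reflect $u$ analytically in the usual Neumann manner used for analytic boundaries, so it is essential to justify carefully that the Robin coefficient is subordinate to the Neumann term in the degree-by-degree matching. The conformal straightening is the cleanest way to preserve the Laplacian structure while retaining enough boundary regularity to carry out the expansion to arbitrary finite order.
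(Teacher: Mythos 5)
Your outline of the regularity step and the interior case (part (1)) is fine, but the boundary analysis has a genuine gap that your own closing remarks brush against without resolving. After straightening the boundary with a $C^{2,\alpha}$ (or conformal) diffeomorphism, $u$ is only $C^{2,\alpha}$ up to $\{t=0\}$: Schauder theory cannot be bootstrapped beyond that, because the transformed operator and the boundary datum are themselves only H\"older of that order. Consequently there is no degree-$\ell$ Taylor expansion of $u$ at a boundary zero of order $\ell\geq 3$, and the proposed ``equating homogeneous components'' simply has no meaning at the orders you need. (Relatedly, finiteness of the vanishing order at the boundary is not automatic -- in the interior it follows from analyticity, but at the boundary it requires a doubling/monotonicity result of Garofalo--Lin type.) This is exactly why the paper does not argue by Taylor matching: it first conjugates the Robin condition away ($u=e^{\phi_h}v$ with $\partial_\nu\phi_h=-h$, reducing to Neumann for a perturbed operator), then reflects evenly across the flattened boundary to obtain a $C^2$ solution of a second-order elliptic equation with merely Lipschitz/bounded coefficients on a full neighbourhood, and then invokes the Hardt--Simon/Han~Qi theorem, which delivers the harmonic-polynomial approximation $u=P+\mathcal O(|x|^{d+\epsilon})$ \emph{together with} $W^{2,p}$ bounds on the remainder, in precisely this low-regularity setting.

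A second, independent gap is the passage from the leading-order expansion to statements (3)--(5). Knowing $u=p_\ell+\mathcal O(r^{\ell+\epsilon})$ does not by itself imply that $N(u)$ is locally $C^1$-diffeomorphic to $N(p_\ell)$; one also needs the gradient estimate $|\nabla(u-p_\ell)|=\mathcal O(r^{\ell-1+\epsilon'})$ in order to apply the Kuo lemma (as in Cheng's paper) and produce the local $C^1$ change of variables with $u=p_\ell\circ\Theta$. The paper extracts this gradient bound from the $W^{2,p}$ estimates of Han~Qi via a scaled Sobolev embedding; in your scheme no such control is produced, so ``claims (3)--(5) are then read off'' is not yet justified. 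To repair your argument you would essentially have to import both ingredients -- the rough-coefficient approximation theorem (after reflection) and the Kuo/Cheng diffeomorphism lemma -- at which point you recover the paper's proof.
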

\subsection{Proof of the theorem.}
The $C^2$-regularity of $u$ up to the boundary is a consequence of standard Schauder estimates (see \cite{GT}).
The proof now  is in four steps.\\
\subsubsection{Reduction to the Neumann case.}
 The first step is to reduce the problem from the Robin case to the Neumann case.
 This is done through a change of functions.
 Setting $u= \exp \phi_h \, v$, we can choose $\phi_h$ such that $v$ satisfies the Neumann condition.
 Indeed, this $\phi_h$ should be in $C^2(\overline{\Omega})$ and satisfy $\partial_\nu \phi_h= -h $ on the boundary of $\Omega$ (take $ h\, {\rm dist} (x,\partial \Omega)$ near $\partial \Omega$ and then use a cut-off function). We obtain a Neumann problem where the Laplacian is replaced by $\exp - \phi_h \circ(-\Delta) \circ \exp \phi_h$, that is the Laplacian with an additional one-dimensional term with $C^1(\overline{\Omega})$ coefficients.\\
   From this point onwards, we consider the Neumann case.

 \subsubsection{Double manifold.}
 The second step is to use the double manifold as suggested in Donnelly-Feffermann,  \cite{DF3, DF2, DF1}.
 As we only wish to prove a local result, by a diffeomorphism we can
 reduce to the case when the boundary is given by $x_1=0$. In these new coordinates, the operator reads
 $$
 H:= \sum_{ij} g_{ij}(x_1,x_2) \partial_{x_i}\partial_{x_j} + \sum a_{i}(x_1,x_2) \partial_{x_i} + c(x)\,.
 $$
 In addition, this diffeomorphism can be chosen as a conformal map (see \cite{DF3}), so more precisely,  we have
 $$
 H:= - \rho(x) \Delta + \sum a_{i}(x) \partial_{x_i} + c(x)\,.
 $$
 Note that in the Neumann case, there are no linear terms. This would make the proof easier and would permit weaker assumptions.\\
If $u$ denotes the eigenfunction defined locally in $x_1 >0$, we define $\tilde u$ by
$$
\tilde u (x_1,x_2) = \left\{  \begin{array}{c}u(x_1,x_2) \mbox{ for }  x_1 >0\,,\\
   u(-x_1,x_2) \mbox{ for } x_1 < 0\,.
\end{array}\right.
$$
We can then define the extension of the operator as $\widetilde H$
$$
 \widetilde H:= - \tilde \rho(x) \Delta + \sum \tilde a_{i} \partial_{x_i} + \tilde c(x)\,.
$$
where $\tilde \rho$, $\tilde a_2$ and $\tilde c$ are  the extensions of $\rho$, $a_2$ and $c$ by  reflection and $\tilde a_1$ is defined by  odd reflection.\\
So $\tilde \rho$, $\tilde a_2$ and $\tilde c$ are Lipschitz and $\tilde a_1$ is only bounded.\\
With this definition, we verify that $\tilde u$ is an even function (with respect to $x_1$)  that satisfies the Neumann condition, and  a solution of
$$
\widetilde H \tilde u = \lambda \tilde u\,.
$$
We know that $\tilde u  \in C^2(\overline{\mathbb R_-}\times \mathbb R) \cap C^2(\overline{\mathbb R_+}\times \mathbb R)$. Also, $\tilde u$ is clearly in $C^{1,1} (\mathbb R^2)$.\\
We note that from $\tilde u(x_1,x_2)= \tilde u (-x_1,x_2)$, we get $\partial^2_{x_1,x_2}\tilde u (0,x_2) =0\,$.
The other second derivatives match on $x_1=0$. Hence $\tilde u$ is actually in $C^2(\R^2)$.

\subsubsection{Nodal structure for solutions of  a  second-order elliptic operator with coefficients with less regularity.}

The third step is to determine whether the local nodal structure that holds for the Laplacian still holds for this second-order elliptic operator  which has coefficients with less regularity.
This problem is analysed by Hardt-Simon in \cite{HaSi} (at least in a weaker sense) and more precisely in \cite{HQ} (see Theorem 1.5 and Theorem 3.1).
 The following theorem is Theorem 3.1 of \cite{HQ} applied to $\tilde u$ and $$L:= \widetilde H -\lambda$$
in the neighbourhood of a point in the zero set on the boundary, which is assumed to be $(0,0)$. From this point onwards, we omit the  tildes.
\begin{thm}\label{thm:QH}
Suppose that $Lu=0$ and that $u$ is not flat at $(0,0)$,  that is, $u$ has finite  vanishing order at $(0,0)$.
Then there exists a homogeneous harmonic polynomial $P$ of degree $d$ and, for any $p>1$, an $\epsilon >0$ such that
$\psi:= u-P $ satisfies:
$$
\psi (x) =\mathcal O (|x|^{d+\epsilon})\,,
$$
and
$$
r^2 \left( \int_{B(0,r)} |D^2\psi (x)|^p\, dx \right)^\frac 1p + r \left (\int_{B(0,r)} |D\psi (x)|^p\, dx \right)^\frac 1p \leq C r^{d+\epsilon + \frac 2p}\,,\, r>0\,.
$$
\end{thm}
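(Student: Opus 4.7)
The plan is to prove Theorem~\ref{thm:QH} by the Almgren frequency / blow-up method, in the spirit of Garofalo--Lin and Han--Lin, which is exactly the route used in \cite{HQ}. The key structural fact is that the principal part of $L$ is $-\tilde\rho(x)\Delta$ with $\tilde\rho$ Lipschitz and $\tilde\rho(0)>0$, so after dividing through by $\tilde\rho$ the equation $Lu=0$ becomes $\Delta u + \mathbf{b}\cdot\nabla u + Vu = 0$ with $\mathbf{b}$ bounded and $V$ bounded. The fact that the frozen principal part at the origin is a scalar multiple of $\Delta$ is what will force the leading term of $u$ to be genuinely harmonic.

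First I would introduce $H(r):=\int_{\partial B_r(0)} u^2\,d\sigma$, $D(r):=\int_{B_r(0)} |\nabla u|^2\,dx$, and the frequency $N(r):= r D(r)/H(r)$. Using a Rellich--Pohozaev identity adapted to the perturbed operator, together with a Caccioppoli estimate to absorb the drift and potential terms, one obtains an almost-monotonicity of the form $(e^{Cr}N(r))' \geq 0$. Consequently $d := N(0^+)\in\mathbb N$ exists and equals the vanishing order of $u$ at $0$, one gets doubling $H(2r)\le C H(r)$, and a quantitative rate $N(r)-d = O(r^\epsilon)$ by iterating the inequality.

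Next I would perform a blow-up: set $u_r(x) := u(rx)/\sqrt{r^{-1}H(r)}$. The uniform bound on $N$ translates into uniform $H^1_{\mathrm{loc}}(\mathbb R^2)$ estimates on $u_r$, so along a subsequence $u_r \rightharpoonup P$ in $H^1_{\mathrm{loc}}$. Rescaling the equation and using $\tilde\rho(rx)\to\tilde\rho(0)$ shows $\tilde\rho(0)\Delta P = 0$, i.e.\ $P$ is harmonic; combining with $N(0^+) = d$ forces $P$ to be homogeneous of degree $d$ and thus a nonzero homogeneous harmonic polynomial. Uniqueness of the blow-up limit (and hence of $P$) follows from the Łojasiewicz-type rate $N(r)-d = O(r^\epsilon)$. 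Finally, setting $\psi := u - P$, the asymptotics of $N$ yield $H_\psi(r) = O(r^{2d+2+2\epsilon})$, and a Caccioppoli--Schauder bootstrap applied to $L\psi = -LP$ (whose right-hand side is the explicit lower-order action of $L$ on the polynomial $P$) produces the $L^p$-bounds on $\nabla\psi$ and $D^2\psi$ claimed in the theorem.

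The main obstacle I expect is the low regularity inherited from the double-manifold construction: the drift coefficient $\tilde a_1$ obtained by odd reflection is only bounded (it can jump across $\{x_1=0\}$), whereas the classical Almgren argument prefers Lipschitz lower-order coefficients. The way around this, following \cite{HaSi}, is to estimate the contribution of $\tilde a_1\partial_{x_1} u$ in the derivative of $N$ by $\|u\|_{L^2(B_r)}\|\nabla u\|_{L^2(B_r)}/H(r)$, which, thanks to the Lipschitz regularity of the principal coefficient $\tilde\rho$ and the positivity $\tilde\rho(0)>0$, is absorbed into the main monotone term at the cost only of the harmless $e^{Cr}$ factor; no regularity better than bounded is needed for $\tilde a_1$ once the Rellich identity is written using the good vector field $\tilde\rho\, x\cdot\nabla$.
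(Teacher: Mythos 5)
First, a point of comparison: the paper does not prove this statement at all --- it is quoted verbatim as Theorem~3.1 of Han \cite{HQ}, applied to $\tilde u$ and $L=\widetilde H-\lambda$, with the non-flatness hypothesis supplied by \cite{GL}. So any complete argument you give is necessarily a different route from the paper's. Your choice of the Almgren frequency / blow-up method is a natural one and is consistent with the Garofalo--Lin circle of ideas that \cite{HQ} builds on; your observations that the frozen principal part at the origin is a scalar multiple of $\Delta$ (so the leading polynomial is genuinely harmonic) and that a merely bounded drift $\tilde a_1$ is admissible in the almost-monotonicity of $N$ are both correct and are exactly the right things to worry about.

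However, there is a genuine gap at the heart of your sketch. The assertion $N(r)-d=\mathcal O(r^\epsilon)$ does not follow from the almost-monotonicity $(e^{Cr}N(r))'\ge 0$ ``by iterating the inequality''. Monotonicity gives the existence of $d=N(0^+)$ and the one-sided bound $N(r)\ge e^{-Cr}d$, but it provides no upper bound on $N(r)-d$ and hence no rate of convergence. That rate is precisely what encodes both conclusions of the theorem: without it you obtain neither the uniqueness of the blow-up limit $P$ (only subsequential limits, which a priori could be different homogeneous harmonic polynomials of degree $d$) nor the improved decay $\psi=\mathcal O(|x|^{d+\epsilon})$ --- compactness alone yields only $\psi=o(|x|^{d})$ along subsequences. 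Establishing the rate requires a separate quantitative ingredient (a spectral-gap or epiperimetric-type comparison of $u$ on $\partial B_r$ with its degree-$d$ harmonic projection, or the dyadic best-polynomial-approximation iteration actually used in \cite{HQ} and going back to \cite{Bers}), and this is the hardest part of the theorem; your proposal assumes it rather than proves it. By contrast, your final step --- the $L^p$ bounds on $D\psi$ and $D^2\psi$ via interior $W^{2,p}$ estimates on dyadic annuli applied to $L\psi=-LP=\mathcal O(|x|^{d-1})$ (note $\Delta P=0$ kills the principal term), summed over scales --- is sound once $\psi=\mathcal O(|x|^{d+\epsilon})$ is in hand.
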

 \begin{rem}
 Note that to apply the theorem, we need to know that $u$ is not flat. According to \cite{HQ} (p. 985, lines  7--9), this is the case under our assumptions and the reference is \cite{GL}.
 \end{rem}
This theorem gives a good indication of the nodal structure:  it
should be close to the zero set of the harmonic polynomial $P$ whose structure is well known.

\subsubsection{Cheng-Kuo's argument.}
Hence the last step is to verify if Cheng's argument  \cite{Chg} applies (a former reference is  \cite{Bers}).
We can apply the following lemma attributed by Cheng \cite{Chg} to Kuo \cite{Ku}.
 \begin{lem}
 Suppose that $u$ and $p$ are smooth functions in $\mathbb R^2$ such that, with $\psi=u-p$, we have for some $d\geq 1$ and $\epsilon>0$,
 \begin{itemize}
 \item[(i)]
 $
\psi (x) =\mathcal O (|x|^{d+\epsilon})\,,
$
\item[(ii)]
$
\nabla \psi(x) = \mathcal O (|x|^{d-1+\epsilon})\,,
$
\item[(iii)]
$p$ vanishes at order $d$ at $0$,
\item[(iv)]
$|\nabla p(x) | \geq \frac{1}{C} |x|^{d-1}$.
\end{itemize}
Then there exists a local $C^1$ diffeomorphism $\Theta$ fixing the origin such that
$$
u (x)= p (\Theta(x))\,.
$$
\end{lem}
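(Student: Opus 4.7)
The plan is to employ a Thom-Mather style isotopy argument. I would consider the convex interpolation
$$F_t(x) := p(x) + t\psi(x) = (1-t)p(x) + tu(x), \qquad t\in[0,1],$$
and look for a family of local diffeomorphisms $\Phi_t$ fixing the origin, with $\Phi_0 = \mathrm{id}$, such that $F_t \circ \Phi_t = p$ on a neighbourhood of $0$. Differentiating this putative identity in $t$ and solving formally for the velocity forces the ansatz
$$\dot\Phi_t(x) = X_t(\Phi_t(x)), \qquad X_t(y) := -\frac{\psi(y)\,\nabla F_t(y)}{|\nabla F_t(y)|^2},$$
so the construction reduces to solving this non-autonomous ODE and verifying that the time-$1$ map is a $C^1$ diffeomorphism. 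Setting $\Theta := \Phi_1^{-1}$ will then yield $u = p \circ \Theta$.

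To run this argument, I would first check that $X_t$ is well defined and small near the origin. Hypotheses (iv) and (ii) together imply $|\nabla F_t(x)| \geq (2C)^{-1}|x|^{d-1}$ on a sufficiently small ball $B(0,r_0)$, uniformly in $t\in[0,1]$. Combining this with hypothesis (i) gives the key pointwise estimate
$$|X_t(x)| \leq K\,|x|^{1+\epsilon},$$
so setting $X_t(0):=0$ makes $X_t$ continuous on $B(0,r_0)$. Standard ODE theory then produces a unique bi-Lipschitz flow $\Phi_t$ on a possibly smaller ball, defined for all $t\in[0,1]$, with $\Phi_t(0)=0$ and $|\Phi_t(x)-x|=\mathcal O(|x|^{1+\epsilon})$. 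A direct differentiation yields $\frac{d}{dt}\bigl(F_t\circ\Phi_t\bigr) = \psi\circ\Phi_t + \langle \nabla F_t\circ\Phi_t,\, X_t\circ\Phi_t\rangle = 0$, hence $u\circ\Phi_1 = F_1\circ\Phi_1 = F_0 = p$, giving the desired factorisation.

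The hard part will be upgrading $\Theta$ from a mere bi-Lipschitz homeomorphism to an honest $C^1$ diffeomorphism at the fixed point $0$, where $X_t$ is only continuous. My plan here is to differentiate $X_t$ on $\R^2\setminus\{0\}$ and, using smoothness of $u$ and $p$ together with the homogeneity structure of $p$ encoded in (iii), (iv), to establish the companion estimate $|DX_t(x)| \leq K'\,|x|^\epsilon$ near the origin. Applying Gr\"onwall to the variational equation $\frac{d}{dt}(D\Phi_t) = (DX_t\circ\Phi_t)\,D\Phi_t$ should then force $|D\Phi_t(x)-I| \leq K''\,|x|^\epsilon$ on $B(0,r_0)\setminus\{0\}$. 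Letting $x\to 0$ provides the continuous extension $D\Phi_t(0) = I$, so $\Phi_t$ is $C^1$ with invertible derivative at the origin, and local $C^1$ inversion yields the sought diffeomorphism $\Theta$.
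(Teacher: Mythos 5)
The paper does not actually prove this lemma: it is stated and attributed to Kuo via Cheng, with only the remark that Cheng's $C^\infty$ hypothesis can be relaxed to $C^2$. Your proposal reconstructs precisely the classical Kuo/Thom--Mather isotopy argument that those references use (the vector field $X_t=-\psi\,\nabla F_t/|\nabla F_t|^2$ integrated along $F_t=p+t\psi$), so in substance you are supplying the proof the paper delegates to the literature, and the skeleton is correct: (iv) and (ii) give the uniform lower bound $|\nabla F_t(x)|\geq (2C)^{-1}|x|^{d-1}$, (i) then gives $|X_t(x)|\leq K|x|^{1+\epsilon}$, the flow exists, is unique and fixes $0$, and $\frac{d}{dt}(F_t\circ\Phi_t)=0$ yields $u\circ\Phi_1=p$.

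The one step you should not leave as a plan is the estimate $|DX_t(x)|\leq K'|x|^{\epsilon}$, because it is the only place where a hypothesis beyond (i)--(iv) is genuinely consumed. Writing $DX_t=-\,\nabla\psi\otimes\frac{\nabla F_t}{|\nabla F_t|^2}-\psi\, D\bigl(\tfrac{\nabla F_t}{|\nabla F_t|^2}\bigr)$, the first term is $\mathcal O(|x|^{\epsilon})$ by (ii) and the gradient lower bound, but the second is $\mathcal O\bigl(|\psi|\,|D^2F_t|\,|x|^{-2(d-1)}\bigr)$ and therefore needs $|D^2F_t|=\mathcal O(|x|^{d-2})$ at least; boundedness of $D^2\psi$ alone only gives $\mathcal O(|x|^{2-d+\epsilon})$, which diverges for $d\geq 3$. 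Under the smoothness hypothesis as stated this is fine: (i) forces the entire $d$-jet of $\psi$ at $0$ to vanish (test along rays), so Taylor's theorem gives $D^2\psi=\mathcal O(|x|^{d-1})$ and hence $|DX_t|=\mathcal O(|x|^{\epsilon})$; your Gr\"onwall step then correctly yields $|D\Phi_t(x)-I|=\mathcal O(|x|^{\epsilon})$ and $|\Phi_t(x)-x|=\mathcal O(|x|^{1+\epsilon})$, so $D\Phi_t(0)=I$ exists and $D\Phi_t$ is continuous at $0$, giving the $C^1$ diffeomorphism. You should make this jet-vanishing observation explicit, both because it is the actual content of the ``hard part'' you defer, and because it is exactly the point that becomes delicate in the paper's application, where $\psi$ is only $C^2$ with $W^{2,p}$ (not pointwise) control of $D^2\psi$.
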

In \cite{Chg}, Cheng applies the lemma to $C^{\infty}$ functions, but the regularity of $u$ and $p$
is not discussed there. The proof clearly holds for $C^2$ functions and this assumption is satisfied in our case.\\

To apply this lemma to the present situation, we observe that  a homogeneous harmonic polynomial of degree $d$ in dimension $2$ satisfies (iii) and (iv) above. We note that (i) holds by Theorem~\ref{thm:QH}.

  It remains to verify that (ii) holds.
  We compare this condition with the property established in the previous theorem.
  By Theorem~\ref{thm:QH}, we get a control of $\nabla \psi$ in $W^{1,p}$ in any ball $B(0,r)$ hence by Sobolev's embedding
  theorem we have, as soon as $p>2$,  the control of $\nabla \psi$ in $L^\infty (B(0,r))$ (see, for example, Part II Case C' of Theorem 5.4 in \cite{Ad}).
  It remains to control the constants appearing in the continuity of this injection.
  To do this, for $r>0$, we introduce a cut-off $\chi (x/r)$ where $\chi=1$ on $B(0,1)$ and ${\rm supp} \chi \subset B (0,2)$, and apply the standard Sobolev embedding
  theorem to $\chi (x/r) \partial_{x_i} \psi$ and use the two estimates from Theorem~\ref{thm:QH}.
  We get
  $$
  \sup_{x\in B(0,r)} |\nabla \psi (x)| \leq C_p\, r^{-2 + d +\epsilon + \frac 2 p}\,,\, \mbox{ for } p>2\,.
  $$
  For $p>2$ sufficiently close to $2$ (for example $-1 +\frac 2 p =\frac \epsilon 2$), we get
  $$
  \sup_{x\in B(0,r)} |\nabla \psi (x)| \leq C_p\, r^{-1 + d + \frac \epsilon 2} \,,\, \mbox{ for } p>2\,.
  $$
  This is  sufficient to apply
  the lemma.
  \begin{rem}
  There is some controversy  regarding Cheng's paper \cite{Chg} when applied to a dimension larger than $2$.
  The reason is that a harmonic homogeneous polynomial does not always satisfy Item (iv) when the dimension is larger than $2$ (see Appendix E in \cite{BM}).
\end{rem}

  \subsection{Remarks.}
  We note that all the proofs are local and the results can be obtained locally if we have the corresponding local regularity property.\\
  The proofs  also work in the Dirichlet case (with a  different condition on $\omega$).
  Instead of the  reflection argument,  in order to construct $\tilde{u}$, we can introduce an extension via odd reflection:
  $$
\tilde u (x_1,x_2) = \left\{  \begin{array}{c}u(x_1,x_2) \mbox{ for }  x_1 >0\,,\\
   - u(-x_1,x_2) \mbox{ for } x_1 < 0\,.
\end{array}\right.
$$
Analogously to the above, if $u$ is an eigenfunction in $C^2(\overline{\Omega})$ satisfying the Dirichlet condition, one can verify that $\tilde u$ is in $C^2(\R^2)$.

\begin{thm}\label{thm:nodinfoD}
Let $\Omega$ be an open set in $\mathbb R^2$ with $C^{2,+}$ boundary and let $u$  be a real-valued eigenfunction of
 the Laplacian with Dirichlet boundary conditions.
Then $u\in C^2(\overline\Omega)$. Furthermore, $u$ has the following properties:
  \begin{enumerate}
    \item\label{item:taylor2} If $u$ and $\nabla u$ vanish at a point $x_0\in \overline{\Omega}$ then there exists $\ell >1$, $\epsilon >0$ and a real-valued, non-zero, harmonic, homogeneous
          polynomial of degree $\ell$ such that:
          \begin{equation}\label{eqn:harmpoly2}
           u(x)=p_\ell (x-x_0)+\mathcal O(|x-x_0|^{\ell+\epsilon}).
          \end{equation}
  \item If moreover $x_0\in\partial\Omega$, then
          \begin{equation}\label{eqn:cosexp2}
            u(x)=ar^\ell \sin \ell \omega+\mathcal O(r^{\ell +\epsilon})
          \end{equation}
          for some non-zero $a\in\mathbb R$, where $(r,\omega)$ are polar coordinates
          of $x$ around $x_0$. The angle $\omega$ is chosen so that the tangent
          to the boundary at $x_0$ is given by the equation $\omega=0$.
    \item \label{item:nod2}The nodal set $N(u)$ is the union of finitely many, $C^2$-immersed circles in $\Omega$,
    and $C^1$-immersed lines which connect points of $\partial\Omega$.
    \item \label{item:order2} If $u$ has a zero of order $\ell$ at a point $x_0\in
          \Omega$, then exactly $\ell $ segments of nodal lines pass through $ x_0$.
          The tangents to the nodal lines at $ x_0$ dissect the full circle of radius $B(x_0,\alpha)$ (for $\alpha >0$ small enough) into $2\ell $ equal angles.
          \item  If $u$ has a zero of order $\ell $ at a point $x_0 \in
          \partial\Omega$ then exactly $\ell -1$ segments of nodal lines meet the
          boundary at $x_0$. The tangents to the nodal lines at $x_0$ are given
          by the equation $\sin  \ell \omega=0$, $\omega \neq 0,\pi$.
  \end{enumerate}
\end{thm}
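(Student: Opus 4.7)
The plan is to mimic the proof of Theorem~\ref{thm:nodinfo} step by step, with the only substantive change being that the double-manifold construction uses \emph{odd} (instead of even) reflection, matching the Dirichlet boundary condition. The function-change trick that reduced Robin to Neumann is not needed here: Schauder theory already gives $u\in C^2(\overline\Omega)$, and the interior statements (item~1 for $x_0\in\Omega$, item~3 away from $\partial\Omega$, item~4) follow from the classical Bers--Cheng argument applied directly to $u$, since this part does not interact with the boundary condition.

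For the boundary statements, I would first flatten $\partial\Omega$ locally near $x_0$ via a conformal diffeomorphism so that the boundary becomes $\{x_1=0\}$ and the Laplacian becomes an elliptic operator $H=-\rho(x)\Delta+\sum a_i(x)\partial_{x_i}+c(x)$ with Lipschitz coefficients (as in the Neumann proof). Then define
\begin{equation*}
\tilde u(x_1,x_2)=\begin{cases} u(x_1,x_2), & x_1>0,\\ -u(-x_1,x_2), & x_1<0,\end{cases}
\end{equation*}
and extend the coefficients by even reflection except $\tilde a_1$ which is extended by odd reflection, to obtain $\widetilde H$. One checks $\widetilde H\tilde u=\lambda\tilde u$ on a neighbourhood of the origin. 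Theorem~\ref{thm:QH} of~\cite{HQ} then yields a nonzero homogeneous harmonic polynomial $P$ of degree $\ell\geq 1$ with $\tilde u=P+\mathcal{O}(|x|^{\ell+\epsilon})$ and the quantitative $W^{1,p}$-control of $\psi:=\tilde u-P$. Since $\tilde u$ is odd in $x_1$, so is $P$, and in dimension two the odd homogeneous harmonic polynomials of degree $\ell$ are precisely the multiples of $r^{\ell}\sin(\ell\omega)$ (with $\omega$ measured from the axis $\{x_1=0\}$). This gives the expansion~\eqref{eqn:cosexp2}.

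The next step is Cheng--Kuo. One bootstraps the $W^{1,p}$ estimate on $\psi$ to the pointwise bound $|\nabla\psi(x)|=\mathcal{O}(|x|^{\ell-1+\epsilon/2})$ by the same cut-off/Sobolev-embedding argument used in the Neumann proof, and verifies $|\nabla P(x)|\geq c|x|^{\ell-1}$, which holds for every harmonic homogeneous polynomial on $\mathbb{R}^2$. The lemma then produces a local $C^1$ diffeomorphism $\Theta$ fixing the origin with $\tilde u=P\circ\Theta$, so the nodal set of $u$ near $x_0$ is $C^1$-diffeomorphic to that of $P$ intersected with $\{x_1>0\}$. The zeros of $\sin(\ell\omega)$ in $[0,\pi]$ are $\omega=k\pi/\ell$ for $k=0,1,\dots,\ell$; the two endpoints lie on $\partial\Omega$ itself, leaving exactly $\ell-1$ nodal lines meeting the boundary transversally at $x_0$, which is item~5. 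The global statement~3 then follows from finiteness of the set of zeros of given order (controlled by $\lambda$) plus the local descriptions.

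The main technical obstacle is verifying that $\tilde u$ is genuinely of class $C^2$ across $\{x_1=0\}$: for even reflection this was painless because $\partial^2_{x_1 x_2}\tilde u$ vanishes on the axis automatically, whereas for odd reflection one must use the Dirichlet condition $u(0,x_2)=0$ (hence $\partial_{x_2}^k u(0,x_2)=0$) together with the PDE $-\Delta u=\lambda u$ to see that $\partial_{x_1}^2 u(0,x_2)=-\lambda u(0,x_2)-\partial_{x_2}^2 u(0,x_2)=0$, so the even extension of $\partial_{x_1}^2 u$ matches the odd reflection of $u$ smoothly at the interface. Once this regularity is in hand, everything downstream (Theorem~\ref{thm:QH}, Cheng--Kuo) goes through verbatim.
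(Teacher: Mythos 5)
Your proposal follows essentially the same route as the paper: the authors prove the Dirichlet version by rerunning the four-step Neumann argument (conformal flattening, double manifold, Theorem~\ref{thm:QH}, Cheng--Kuo) with the even reflection replaced by the odd reflection $\tilde u(x_1,x_2)=-u(-x_1,x_2)$ for $x_1<0$, exactly as you do, and they leave the $C^2$-matching of $\tilde u$ across $\{x_1=0\}$ as a verification which you correctly carry out via $\partial_{x_1}^2u(0,x_2)=-\partial_{x_2}^2u(0,x_2)-\lambda u(0,x_2)/\rho(0,x_2)=0$. Your identification of the odd harmonic polynomials with multiples of $r^{\ell}\sin(\ell\omega)$ and the resulting count of $\ell-1$ boundary nodal lines matches the paper's conclusion.
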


We can, for example, refer to \cite{HHOT} for the Dirichlet case which  gives the results (except $C^2$ regularity) under the  weaker assumption  that the boundary is piecewise $C^{1,+}$.


\begin{thebibliography}{9}


\bibitem{Ad}
R. A. Adams.
\newblock Sobolev Spaces.
\newblock Academic Press, New York (1975).

\bibitem{AFK}
P.\ R.\ S.\ Antunes, P.\ Freitas, J.\ B.\ Kennedy.
\newblock Asymptotic behaviour and numerical approximation of optimal eigenvalues of the Robin Laplacian.
\newblock ESAIM: COCV, Volume 19, Number 2, April-June (2013) 438--459.




\bibitem{Be} P. B\'erard.
\newblock  In\'egalit\'es isop\'erim\'etriques et applications. Domaines nodaux des fonctions propres.
\newblock S\'eminaire \'Equations aux d\'eriv\'ees partielles  (\'Ecole Polytechnique) 1981--1982, exp. n$^\circ$11, 1-9.

\bibitem{BH1}
P.\ B\'erard, B.\ Helffer.
\newblock Dirichlet eigenfunctions of the square membrane: Courant's property, and A. Stern's and A. Pleijel's analyses.
\newblock In: A. Baklouti, A. El Kacimi, S. Kallel, N. Mir (eds).
Analysis and Geometry. Springer Proceedings in Mathematics \& Statistics, 127, Springer, Cham (2015).

\bibitem{BH2}
P.\ B\'erard, B.\ Helffer.
\newblock Sturm's theorem on zeros of linear combinations of eigenfunctions.
\newblock arXiv:1706.08247. To appear in Exp. Math. (2018).

\bibitem{BM} P. B\'erard, D. Meyer.
\newblock In\'egalit\'es isop\'erim\'etriques et applications.
\newblock Annales de l'ENS 15 (3), 513--541 (1982).

\bibitem{Bers}
L. Bers.
\newblock Local behavior of solutions of general linear elliptic equations.
\newblock CPAM 8 (1955), 473--496.


\bibitem{Bos}
M.H. Bossel.
\newblock Membranes \'elastiquement li\'ees: inhomog\`enes ou sur une surface: une nouvelle extension du th\'eor\`eme isop\'erim\'etrique de Rayleigh-Faber-Krahn.
\newblock Z. Angew. Math. Phys. 39 (5) (1988), 733--742.


\bibitem{BG1}
D.\ Bucur, A.\ Giacomini.
\newblock A variational approach to the isoperimetric inequality for the Robin eigenvalue
problem.
\newblock Arch. Ration. Mech. Anal. 198(3):927--961, 2010.

\bibitem{BG2}
D.\ Bucur, A.\ Giacomini.
\newblock Faber-Krahn inequalities for the Robin-Laplacian: a free discontinuity approach.
\newblock Arch. Ration. Mech. Anal. 218 (2015), no. 2, 757--824.

\bibitem{Chg}
S.-Y.\ Cheng.
\newblock Eigenfunctions and nodal sets.
\newblock Commentarii Mathematici Helvetici. 51 (1976), 43--55.


\bibitem {CH} R. Courant and D. Hilbert.
\newblock Methods of Mathematical Physics, Vol. 1.
\newblock New York (1953).

\bibitem{D}
D.\ Daners.
\newblock A Faber-Krahn inequality for Robin problems in any space dimension.
\newblock Math. Ann. (2006), 335--767.

\bibitem{DF1} H. Donnelly, C. Fefferman.
\newblock Nodal sets of eigenfunctions on Riemannian manifolds,
\newblock Invent. Math. 93 (1988), 161--183. 6.

\bibitem{DF2} H. Donnelly, C. Fefferman.
\newblock  Nodal sets of eigenfunctions: Riemannian manifolds with boundary,
\newblock Moser volume,
Analysis et Cetera, Academic Press, New York, (1990), 251--262.

\bibitem{DF3} H. Donnelly, C. Fefferman.
\newblock Nodal sets for eigenfunctions of the Laplacian on surfaces.
\newblock Journal of the American Mathematical Society.
\newblock Volume 3, Number 2, April 1990.

\bibitem{FK18}  P.\ Freitas, J.\ B.\ Kennedy.
\newblock Extremal domains and Polya type inequalities for the Robin Laplacian and union of rectangles.
\newblock arXiv:1805.10075v1  (25 May 2018).


\bibitem{GL} N. Garafolo, F.H.  Lin.
\newblock Monotonicity properties of variational integrals, $A_p$ weights, and unique continuation.
\newblock Indiana Univ. Math. Journal 35 (1986), 245--268.

\bibitem{GT} D. Gilbarg, N.S. Trudinger.
\newblock Elliptic Partial Differential Equations of Second Order.
\newblock Grundlehren der mathematischen Wissenschaften 224 (1977).

\bibitem{GHPS} K. Gittins, B. Helffer.
\newblock Courant-sharp Robin eigenvalues for the square--Part~II.
\newblock Work in progress.

\bibitem{kGcL} K. Gittins, C. L\'ena.
\newblock Upper bounds for Courant-sharp Neumann and Robin eigenvalues.
\newblock  arXiv:1810.09950 [math.SP] (23 October 2018).

\bibitem{GN}
D.\ S.\ Grebenkov, B.\ -T.\ Nguyen.
\newblock Geometrical structure of Laplacian eigenfunctions.
\newblock SIAM Rev. 55(4) (2013), 601--667.

\bibitem{HQ} Han Qi.
\newblock Singular sets of solutions to elliptic equations.
\newblock Indiana Univ. Math. Journal Vol. 43, No 3, (1994), 983--1002.

\bibitem{HaSi} R. Hardt, L. Simon.
\newblock Nodal sets for solutions of elliptic equations.
\newblock J. of Differential Geometry 30 (1989), 505--522.

\bibitem{HHOHOO} B. Helffer, M. Hoffmann-Ostenhof, T. Hoffmann-Ostenhof, and M. Owen.
\newblock Nodal sets for the groundstate of the
Schr\"odinger operator with zero magnetic field
in a non simply connected domain.
\newblock   Comm. Math. Phys. 202 (1999), no. 3, 629--649.

\bibitem{HHOT}
B.\ Helffer, T.\ Hoffmann-Ostenhof, and S.\ Terracini.
\newblock Nodal domains and spectral minimal partitions.
\newblock Ann. Inst. H. Poincar\'e Anal. Non Lin\'eaire. 26 (2009), 101--138.
%

\bibitem{HPS1}
B.\ Helffer, M.\ Persson Sundqvist.
\newblock Nodal domains in the square---the Neumann case.
\newblock Mosc. Math. J. 15 (2015), 455--495.


\bibitem{HOMN}T. Hoffmann-Ostenhof, P.W. Michor, and N. Nadirashvili.
\newblock  Bounds on the multiplicity of eigenvalues
for fixed membranes.
\newblock Geom. Funct. Anal., 9(6):1169--1188, (1999).

\bibitem{K09}
J.\ B.\ Kennedy.
\newblock An isoperimetric inequality for the second eigenvalue of the Laplacian with Robin boundary conditions.
\newblock Proceedings of the American Mathematical Society. 137, No. 2 (2009), 627--633.

\bibitem{K11}
J.\ B.\ Kennedy.
\newblock The nodal line of the second eigenfunction of the Robin Laplacian in $\R^2$ can be closed.
\newblock J. Differential Equations. 251 (2011), 3606--3624.

\bibitem{Ku} T.C. Kuo.
\newblock On $C^\infty$-sufficiency of sets of potential functions.
\newblock Topology, 8 (1969), 167--171.



\bibitem{Ley} J. Leydold.
\newblock Knotenlinien und Knotengebiete von Eigenfunktionen.
\newblock  Diplom Arbeit, Universit\"at Wien (1989), unpublished. Available at http://othes.univie.ac.at/34443/.

\bibitem{Pl}
{\AA}. Pleijel.
\newblock Remarks on Courant's nodal line theorem.
\newblock Comm. Pure Appl. Math. 9 (1956), 543--550.

\bibitem{Poc} F. Pockels.
\newblock \"Uber die partielle Differentialgleichung $-\Delta u - k^2 u=0$ and deren Auftreten in mathematischen Physik.
\newblock Historical Math. Monographs. Cornell University (2013). (Originally Teubner- Leipzig 1891.)

%

\bibitem{St} C. Sturm.
\newblock M\'{e}moire sur une classe d'\'{e}quations \`{a} diff\'{e}rences partielles.
\newblock Journal de Math\'{e}matiques Pures et Appliqu\'{e}es, 1 (1836), 373--444.


\end{thebibliography}
\end{document}